\documentclass[11pt]{amsart}

\usepackage{pb-diagram}

\oddsidemargin 1.2 cm
\evensidemargin 1 cm
\addtolength{\textwidth}{1.5cm} 
\addtolength{\rightmargin}{10cm}
\addtolength{\textheight}{0.5cm}
\addtolength{\topmargin}{-0.5cm}

\usepackage{amsfonts}
\usepackage{amsmath}
\usepackage{amssymb}

\newcommand{\tr}{\textnormal{tr}}
\newcommand{\ric}{\textnormal{Ric}}
\newcommand{\kod}{\textnormal{kod}}

\newcommand{\dbar}{\overline{\partial}}

\newcommand{\ddt}[1]{\frac{\partial #1}{\partial t}}

\newcommand{\ddbar}{\sqrt{-1}\partial\dbar}

\newcommand{\cD}{\mathcal{D}}

\newcommand{\cK}{\mathcal{K}}

\newcommand{\cS}{\mathcal{S}}
\newcommand{\cN}{\mathcal{N}}
\newcommand{\vol}{\textnormal{Vol}}
\newcommand{\diam}{\textnormal{diam}}
\newcommand{\re}{\textnormal{Re}}
\newcommand{\tdelta}{\left(\ddt{}- \Delta\right)}
\newcommand{\supp}{\textnormal{supp}}
 \newcommand{\sR}{\textnormal{R}}

\newtheorem{claim}{Claim}
\newtheorem{theorem}{Theorem}[section]
\newtheorem{proposition}{Proposition}[section]
\newtheorem{lemma}{Lemma}[section]
\newtheorem{definition}{Definition}[section]
\newtheorem{corollary}{Corollary}[section]

\newtheorem{songconj}{Conjecture}[section]

\numberwithin{equation}{section}

\begin{document}

\title{Diameter and Ricci curvature estimates for long-time solutions of the K\"ahler-Ricci flow}

\author{Wangjian Jian$^*$ and Jian Song$^\dagger$}

\thanks{Research supported in
part by China Postdoctoral Science Foundation Grant No.2019M660827 and National Science Foundation grants DMS-1711439.}

\address{$^*$ Hua Loo-Keng Center of Mathematical Sciences, Academy of Mathematics and Systems Science, Chinese Academy of Sciences, Beijing, 100190, China }

\address{$^\dagger$  Department of Mathematics, Rutgers University, Piscataway, NJ 08854, U.S.A}

\begin{abstract}  It is well known that the K\"ahler-Ricci flow on a K\"ahler manifold $X$ admits a long-time solution if and only if $X$ is a minimal model, i.e., the canonical line bundle $K_X$ is nef. The abundance conjecture in algebraic geometry predicts that $K_X$ must be semi-ample when $X$ is a projective minimal model.   We prove that if $K_X$ is semi-ample, then the diameter is uniformly bounded for long-time solutions of the normalized K\"ahler-Ricci flow. Our diameter estimate combined with the scalar curvature estimate in \cite{ST4} for long-time solutions of the K\"ahler-Ricci flow are natural extensions of Perelman's diameter and scalar curvature estimates for short-time solutions on Fano manifolds. We further prove that along the normalized K\"ahler-Ricci flow,  the Ricci curvature is uniformly bounded away from singular fibres of $X$ over its unique algebraic canonical model $X_{\textnormal{can}}$ if the Kodaira dimension of $X$ is one. As an application, the normalized K\"ahler-Ricci flow on a minimal threefold $X$ always converges sequentially in Gromov-Hausdorff topology to a compact metric space homeomorphic to its canonical model $X_{\textnormal{can}}$,  with uniformly bounded Ricci curvature away from the critical set of the pluricanonical map from $X$ to $X_{can}$.

\end{abstract}

\maketitle

\section{Introduction}

The behavior of long-time solutions of the K\"ahler-Ricci flow has been extensively studied \cite{Ca, Ts1, TiZha, ST2, Zh,  TWY, W, GSW, TiZ1, TiZ2, STZ} after the fundamental work of Hamilton \cite{H1}, Perelman \cite{Pe1, Pe2} and the pioneering work of Song-Tian \cite{ST1} in the framework of the analytic minimal model program \cite{ST3}. In this paper, we consider the normalized K\"ahler-Ricci flow on an $n$-dimensional K\"ahler manifold $X$ defined by
 \begin{equation}\label{krflow}
\left\{
\begin{array}{l}
{ \displaystyle \ddt{g} = -\ric(g) - g,}\\
\\
g|_{t=0} =g_0 .
\end{array} \right.
\end{equation}
with the initial K\"ahler metric $g_0$. It is well-known \cite{Ts1, TiZha} that the K\"ahler-Ricci flow (\ref{krflow}) admits a long-time solution if and only if $K_X$ is nef, i.e., for any holomorphic curve $C$ of $X$, 
$$K_X \cdot C = \int_C \eta \geq0, $$
where $\eta$ is any smooth closed $(1,1)$-form in the class of $[K_X]=-c_1(X)$. The abundance conjecture in birational geometry predicts that the canonical bundle $K_X$ is nef if and only it is semi-ample, i.e., $mK_X$ is globally generated for some sufficiently large $m\in \mathbb{Z}^+$. A K\"ahler manifold of nef canonical bundle is also called a minimal model.  The abundance conjecture always holds for K\"ahler manifolds of general type or of complex dimension no greater than three \cite{K1, Mi, K2, CHP}. The deep and subtle relationship between these two notations of positivity in algebraic geometry is also reflected in the canonical metric structures of the underlying K\"ahler manifolds.

We will assume the canonical bundle $K_X$ is semi-ample for most parts of the paper. When $K_X$ is semi-ample, the pluricanonical system induces a unique holomorphic map 
$$\Phi: X \rightarrow X_{\textnormal{can}}$$
from $X$ to its unique canonical model $X_{\textnormal{can}}$ as a normal projective variety. The Kodaira dimension of $X$, denoted by $\textnormal{kod}(X)$,  is defined to be the complex dimension of $X_{\textnormal{can}}$. 

When $X$ is of general type, i.e. $\dim X_{\textnormal{can}} =\textnormal{kod}(X) = \dim X = n$, $\Phi$ is a birational morphism. When $K_X$ is positive, it is proved in \cite{Ca} that the K\"ahler-Ricci flow (\ref{krflow}) converges smoothly to the unique K\"ahler-Einstein metric on $X$, as the alternative proof of the celebrated  theorem of Aubin \cite{A} and Yau \cite{Y} for the existence of K\"ahler-Einstein metrics on K\"ahler manifolds with negative first Chern class. When $K_X$ is not strictly positive, the flow must develop long-time singularities, exactly at $\cS_X$, the critical set of $\Phi$. Tsuji \cite{Ts1} proves that the flow in fact converges smoothly on $$X^\circ=X\setminus \cS_X$$ to the unique smooth K\"ahler-Einstein metric $g_{KE}$ on $X^\circ$, which  can uniquely extend to a global K\"ahler-Einstein current on $X_{\textnormal{can}}$ with bounded local potentials \cite{TiZha} based on the $L^\infty$-estimate of \cite{Kol1, Zh06}. Furthermore, the scalar curvature \cite{Zh} and diameter \cite{W} are uniformly bounded along the flow.  It is proved in \cite{S2} that the metric completion of $(X_{\textnormal{can}}^\circ, g_{KE})$ is a compact metric space homeomorphic to $X_{\textnormal{can}}$ as a projective variety and so the limiting metric space of the K\"ahler-Ricci flow must coincide with $X_{\textnormal{can}}$ as well \cite{W}.

When $1\leq \textnormal{kod} (X) \leq n-1$, $\Phi$ gives an holomorphic fibration of $X$ over $X_{\textnormal{can}}$ and the general fibres are smooth K\"ahler manifolds with vanishing first Chern class. We define $\cS_{X_{\textnormal{can}}}$ to be the critical values of $\Phi$ and let 
$$
X_{\textnormal{can}}^{\circ} = X_{\textnormal{can}} \setminus \cS_{X_{\textnormal{can}}}, ~~ X^\circ = \Phi^{-1}(X_{\textnormal{can}}^\circ), ~\cS_X = \Phi^{-1}(\cS_{X_{\textnormal{can}}}). 
$$
Then $\cS_X$ is the set of all singular fibres of $\Phi$ and $X_{\textnormal{can}}^\circ$ is the Zariski open set of $X_{\textnormal{can}}$ that contains all the smooth points of $X_{\textnormal{can}}$ over which the fibres of $\Phi$ are nonsingular. Obviously, $X^\circ$ is an open Zariski open set of $X$ containing all the nonsingular fibres of $\Phi$. The K\"ahler-Ricci flow (\ref{krflow}) will naturally collapse all the Calabi-Yau fibres in terms of volume or K\"ahler classes. It is proved in a series of papers \cite{ST1, ST2, ST4} that the collapsing flow converges with uniformly bounded scalar curvature to a unique smooth twisted K\"ahler-Einstein metric $g_{\textnormal{can}}$ on $X_{\textnormal{can}}^\circ$ satisfying 
\begin{equation}\label{twke}
\ric(g_{\textnormal{can}}) = - g_{\textnormal{can}} + g_{WP} 
\end{equation}
on $X_{\textnormal{can}}^\circ$, where $g_{WP}$ is the Weil-Petersson metric for the variation of Calabi-Yau fibres. The canonical metric $g_{\textnormal{can}}$ extends to a unique twisted K\"ahler-Einstein current $\omega_{KE}$ with bounded local potentials and $g_{WP}$ also extends to a closed positive current on $X_{can}$ \cite{GS}. Furthermore,  $(X_{\textnormal{can}}^\circ, g_{\textnormal{can}})$ has bounded diameter  \cite{FGS} and  its metric completion is a compact metric space  \cite{STZ}, which is  essential for the diameter estimate in \cite{STZ} as well as in our work for the long time solutions of the flow. In the special case when $\dim X =2$ or the general fibre of $X$ over $X_{\textnormal{can}}$ is a complex torus, it is proved in \cite{STZ} the diameter is indeed uniformly bounded, after applying Tian-Zhang's relative volume comparison of \cite{TiZ2}. To apply such a relative volume comparison, one needs to obtain a uniform bound for the Ricci curvature in a neighborhood of a nonsingular fibre of $X$ over $X_{\textnormal{can}}$. In general, it is rather difficult to verify the Ricci curvature bound except for the case when the general fibre is a complex torus because the curvature is uniformly bounded along the flow near each nonsingular torus fibre. In this paper, we will apply recent results of \cite{Ba} to obtain a relative volume comparison for the Ricci flow by removing the assumption on Ricci curvature and to establish a uniform diameter bound after combining the techniques developed in \cite{STZ}. The following theorem  is the main result of the paper.

\begin{theorem} \label{main1}  Suppose $X$ is an $n$-dimensional K\"ahler manifold with semi-ample canonical line bundle $K_X$. Then for any initial K\"ahler metric $g_0$, the normalized K\"ahler-Ricci flow (\ref{krflow}) admits a long-time solution $g(t)$ for $t\in [0, \infty)$ and there exists $C=C(g_0)>0$ such that  for all $t\geq 0 $, 
\begin{equation}
\sup_X |\sR(t)| + \textnormal{diam}_{g(t)}(X) \leq C, 
\end{equation}
where $\sR(t)$ is the scalar curvature of $g(t)$ and $\diam_{g(t)}(X)$ is the diameter of $(X, g(t))$.

\end{theorem}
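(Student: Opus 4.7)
The long-time existence of the flow (\ref{krflow}) on a minimal model is classical \cite{Ts1, TiZha}, and the uniform scalar curvature bound $\sup_X|\sR(t)| \le C$ is exactly the content of the main result of \cite{ST4}. Thus the task reduces to the uniform diameter bound, which I would prove by splitting into cases according to the Kodaira dimension $\kod(X)$.

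When $\kod(X) = n$, the flow converges smoothly on $X^\circ$ to the K\"ahler-Einstein metric \cite{Ts1} and the metric completion of $(X_{\textnormal{can}}^\circ, g_{KE})$ is homeomorphic to $X_{\textnormal{can}}$ by \cite{S2}; this yields the diameter bound as in \cite{W}. When $\kod(X) = 0$, the canonical class is numerically trivial, $[\omega(t)] = e^{-t}[\omega_0]$ contracts exponentially, and the rescaled flow $e^{t}g(t)$ converges smoothly to the Calabi-Yau metric by \cite{Ca}, so $\diam_{g(t)}(X) \lesssim e^{-t/2}$ is trivially bounded. The substantive case is $1 \le \kod(X) \le n-1$, in which $\Phi: X \to X_{\textnormal{can}}$ is a genuine Calabi-Yau fibration and the flow collapses the fibres.

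In that regime I would adapt the framework of \cite{STZ}. The twisted K\"ahler-Einstein metric $g_{\textnormal{can}}$ has uniformly bounded diameter on $X_{\textnormal{can}}^\circ$ by \cite{FGS}, and its metric completion is a compact space homeomorphic to $X_{\textnormal{can}}$ by \cite{STZ}. Given $p,q\in X$, the aim is to bound $d_{g(t)}(p,q)$ by choosing a uniformly short path in the base from $\Phi(p)$ to $\Phi(q)$, covering it by finitely many small balls in $X_{\textnormal{can}}^\circ$ over which the fibre diameter along $g(t)$ collapses exponentially, and then chaining together short horizontal lifts on consecutive balls. The analytic crux is a relative volume comparison along the flow that controls the horizontal jump from one fibre to a nearby one.

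The main obstacle, and precisely the point at which \cite{STZ} was forced to restrict to torus fibres or $\dim X = 2$, is this volume comparison. In \cite{STZ} it is obtained from Tian-Zhang's relative volume comparison \cite{TiZ2}, which requires a uniform Ricci bound on a tubular neighborhood of a nonsingular fibre---an assumption currently known only when the general fibre is a complex torus. I would remove this hypothesis by invoking the recent Ricci flow compactness and heat kernel estimates of \cite{Ba}, which furnish an analogous relative volume comparison assuming only the uniform scalar curvature bound supplied by \cite{ST4} (together with standard Perelman-type entropy bounds available on a minimal manifold). Feeding this Bamler-type comparison into the \cite{STZ} machinery should close out the diameter estimate in the remaining collapsing case and complete the proof.
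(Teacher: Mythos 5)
Your proposal matches the paper's strategy: reduce to the collapsing case $1 \leq \kod(X) \leq n-1$, and then substitute Bamler's entropy-based volume estimates \cite{Ba}, which require only scalar curvature control (supplied by \cite{ST4}), for the Tian--Zhang relative volume comparison \cite{TiZ2} used in \cite{STZ}, which demanded a local Ricci bound available only for torus fibres. One small clarification on the mechanism, since your phrasing suggests the volume comparison controls the ``horizontal jump'' between nearby fibres: in fact that part is handled directly by the $C^0$-convergence $g(t) \to \Phi^* g_{\textnormal{can}}$ on compact subsets of $X^\circ$ together with the compactness of the metric completion of $(X_{\textnormal{can}}^\circ, g_{\textnormal{can}})$ from \cite{STZ, FGS}. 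The volume comparison's actual role is a pigeonhole contradiction that prevents points from drifting off near the singular fibres: if some $x_j$ lay at distance $\geq \diam(Y)+\varepsilon$ from a fixed $x_0 \in X^\circ$, Bamler's estimate would force $B_{g(t_j)}(x_j, \varepsilon/2)$ to have volume comparable to $\vol_{g(t_j)}(X)$, yet this ball must sit inside a neighborhood of the singular fibres whose volume is shown (via a $\chi$-plurisubharmonic cutoff and integration by parts) to be a small fraction of the total.
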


We remark that the scalar curvature bound is already proved in \cite{ST4} (\cite{Zh} for the case of general type). The first named author further proves in \cite{J} that the scalar curvature in fact converges to $-\textnormal{kod} (X)$ on $X^\circ$. The uniform scalar curvature and diameter bound in Theorem \ref{main1} for the long-time solution of the K\"ahler-Ricci flow is a natural extension and analogue of Perelman's scalar curvature and diameter estimate \cite{Pe2, SeT} for the K\"ahler-Ricci flow of finite time extinction, i.e. the K\"ahler-Ricci flow on Fano manifolds with initial K\"ahler metric in the first Chern class. Perelman's scalar curvature estimate is essential for the convergence of the Fano Ricci flow \cite{TiZhu, TZZZ, CW, B, DS, HL, GPS} in relation to the Hamilton-Tian conjecture. The remaining case for scalar curvature and distance estimates is the non-extinct finite time  solutions of the K\"ahler-Ricci flow. This corresponds to the analytic minimal model program proposed in \cite{ST3} and is related to geometric surgeries and local uniformization problem in \cite{SW1, SW2, SY, S1}.

The following volume estimate follows from our relative volume comparison and Theorem \ref{main1}.

\begin{corollary} \label{maincor1} Suppose $X$ is an $n$-dimensional K\"ahler manifold with semi-ample canonical line bundle $K_X$. Let $g(t)$ be the long-time solution of the normalized K\"ahler-Ricci flow (\ref{krflow}) with any initial K\"ahler metric $g_0$ for $t\in [0, \infty)$.  Then there exists $c=c (g_0)>0$ such that for any $p\in X$, $t\geq 0$ and $0<r < \diam_{g(t)}(X)$, 
\begin{equation}\label{s1vol}
  c ~ \vol_{g(t)}(X) \leq   \frac{\vol_{g(t)}(B_{g(t)}(p, r))}{r^{2n}}   \leq c^{-1}  \vol_{g(t)}(X),
\end{equation}
where $B_{g(t)}(p, r))$ is the metric ball center at $p$ with radius $r$ with respect to $g(t)$.

\end{corollary}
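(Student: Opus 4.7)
The plan is to combine the relative volume comparison that earlier sections of the paper establish for the normalized K\"ahler-Ricci flow --- extending Tian-Zhang's comparison in \cite{TiZ2} by removing any Ricci curvature hypothesis via the heat-kernel framework of \cite{Ba} --- with the uniform scalar curvature and diameter bounds of Theorem \ref{main1}.

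First I would invoke the relative volume comparison as a two-sided statement: for each $p \in X$, $t \ge 0$, and $0 < r \le R \le \diam_{g(t)}(X)$,
\[
C_1 \, \frac{\vol_{g(t)}(B_{g(t)}(p, R))}{R^{2n}}
\;\le\;
\frac{\vol_{g(t)}(B_{g(t)}(p, r))}{r^{2n}}
\;\le\;
C_2 \, \frac{\vol_{g(t)}(B_{g(t)}(p, R))}{R^{2n}},
\]
with constants $C_1, C_2$ depending only on $g_0$ through $\sup_{t,X} |\sR(t)|$. The left inequality is the Bishop-Gromov direction, adapted along the flow from the uniform scalar curvature bound; the right (non-inflation) inequality is the new input made available by \cite{Ba}, and is precisely what cannot be obtained by classical Riemannian comparison geometry without a Ricci bound.

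Next I would specialize $R = \diam_{g(t)}(X)$, so that $B_{g(t)}(p, R) = X$ and the middle ratio is sandwiched by constant multiples of $\vol_{g(t)}(X)/\diam_{g(t)}(X)^{2n}$. To finish, I would combine this with the upper bound $\diam_{g(t)}(X) \le C_0$ from Theorem \ref{main1} and a matching uniform lower bound $\diam_{g(t)}(X) \ge c_0 > 0$: in the general type case this comes from the positive limit of $\vol_{g(t)}(X)$ together with a volume-diameter inequality, while for lower Kodaira dimension it is read off from the sequential Gromov-Hausdorff convergence of $(X,g(t))$ to the canonical model equipped with a limiting metric of positive diameter. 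With the diameter pinched between two positive constants, the factor $\diam_{g(t)}(X)^{2n}$ absorbs into the constants and \eqref{s1vol} follows.

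The main obstacle is the non-inflation direction of the relative volume comparison. It is not a Riemannian comparison-geometry statement, but rather requires the heat kernel and $\nu$-entropy machinery of \cite{Ba} applied uniformly in $t$ along the long-time normalized K\"ahler-Ricci flow, for which the uniform scalar curvature estimate in Theorem \ref{main1} is the indispensable ingredient that turns a time-local into a time-uniform comparison.
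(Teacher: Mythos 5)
Your proposal identifies the right inputs --- the uniform scalar curvature and diameter bounds from Theorem \ref{main1} together with Bamler's Nash-entropy estimates --- but the volume comparison you actually invoke is of the wrong type, and in one direction it is false.

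The relative volume comparison the paper proves and uses (Proposition \ref{volcomp1}) is a \emph{same-radius, different-center} statement: under $|\sR|\le r^{-2}$ on $M\times[-r^2,0]$ and $d_{g(0)}(x_1,x_2)\le A$,
\[
\vol_{g(0)}\bigl(B_{g(0)}(x_1,r)\bigr)\ \ge\ c(n)\,e^{-\sqrt{n}\,r^{-1}A}\,\vol_{g(0)}\bigl(B_{g(0)}(x_2,r)\bigr).
\]
It is obtained from Lemmas \ref{noncol} and \ref{noninfl}, which sandwich $\vol(B(x,r))$ between $c\,r^n e^{\cN^*_{-r^2}(x,0)}$ and $C\,r^n e^{\cN^*_{-r^2}(x,0)}$, combined with the spatial Lipschitz bound on the pointed Nash entropy in Lemma \ref{nashgrad}. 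What you assert instead is a \emph{same-center, different-scale} doubling inequality with constants depending only on $\sup|\sR|$. To get that from \cite{Ba} one would need a two-sided uniform bound on $\cN^*_{-r^2}(p,0)-\cN^*_{-R^2}(p,0)$; the monotonicity in Lemma \ref{s3mono} gives only $\cN^*_{-r^2}(p,0)\ge\cN^*_{-R^2}(p,0)$, i.e.\ the non-collapsing direction, with no reverse control. And in fact your ``non-inflation'' inequality $\vol(B(p,r))/r^{2n}\le C_2\,\vol(B(p,R))/R^{2n}$ fails uniformly along a collapsing K\"ahler--Ricci flow: at a regular point and at scales $r\lesssim e^{-t/2}$ the ratio $\vol(B(p,r))/r^{2n}$ is of Euclidean order $1$, while at scale $R\sim\diam_{g(t)}(X)$ it is of order $\vol_{g(t)}(X)\sim e^{-(n-\kappa)t}\to 0$, so no $t$- and $r$-independent $C_2$ can exist. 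The key lemma your proof rests on is therefore not available from the paper or from \cite{Ba}.

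The paper's argument avoids this by anchoring at a point rather than at a scale. Fix $x_0\in X^\circ$ and a small $r_0$; Lemma \ref{s4vol}, using the local $C^0$-convergence of $g(t)$ to $\Phi^*g_{\textnormal{can}}$ (Lemma \ref{s2fib}) and the uniform bound on $e^{(n-\kappa)t}\omega(t)^n/\Omega$, gives $\vol_{g(t)}(B_{g(t)}(x_0,r_0))\sim e^{-(n-\kappa)t}\sim\vol_{g(t)}(X)$. Proposition \ref{volcomp1}, applied at this fixed radius with $A=\diam_{g(t)}(X)\le C$ (Theorem \ref{main1}) and the scalar curvature bound, then transports this to $B_{g(t)}(p,r_0)$ for every $p$. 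You should replace your scale-anchoring at $R=\diam_{g(t)}(X)$ by point-anchoring at $x_0$ and use the same-radius Proposition \ref{volcomp1} rather than a doubling-type inequality. Note also that your route to a diameter lower bound via Corollary \ref{maincor2} would be circular, as that convergence statement is derived from the present corollary; no separate lower diameter bound is in fact needed for the paper's argument.
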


In fact, the estimate (\ref{s1vol}) can be made more explicit by applying the volume estimate in \cite{ST4}. There exists $C=C(g_0)$ such that for all $t\geq 0$, 
$$    C^{-1} e^{-(n-\kappa)t}  \leq \vol_{g(t)}(X) \leq C e^{-(n-\kappa)t} ,$$
where $\kappa = \kod(X)$. 

The following convergence result follows naturally by the uniform diameter bound and Corollary \ref{maincor1}.

\begin{corollary} \label{maincor2} Suppose $X$ is an $n$-dimensional K\"ahler manifold with semi-ample canonical line bundle $K_X$. Let $g(t)$ be the long-time solution of the normalized K\"ahler-Ricci flow (\ref{krflow}) with any initial K\"ahler metric $g_0$ for $t\in [0, \infty)$. Then $g(t)$ converges to the twisted K\"ahler-Einstein $g_{\textnormal{can}}$ in $C^0(X^\circ)$ as $t\rightarrow \infty$, and for any $t_j \rightarrow \infty$, after possibly passing to a subsequence, $(X, g(t_j))$ converges in Gromov-Hausdorff topology to a compact metric space $(X_\infty, d_\infty)$ satisfying the following conditions.

\smallskip

\begin{enumerate}

\item  Let $(Y, d_Y)$ be the metric completion of $(X_{\textnormal{can}}^\circ, g_{\textnormal{can}})$. Then the identity maps from $X_{\textnormal{can}}^\circ\subset X_\infty$ and $X_{\textnormal{can}}^\circ \subset Y$  to $X_{\textnormal{can}}^\circ \subset X_{\textnormal{can}}$ can extend uniquely to the following two Lipschitz maps 
$$\Psi: (Y, d_Y) \rightarrow (X_\infty, d_\infty), ~\Upsilon: (X_\infty, d_\infty) \rightarrow (X_{\textnormal{can}}, g_{FS}),$$
where $g_{FS}$ is the restriction of the Fubini-Study metric to $X_{can}$ from the pluricanonical map $\Phi$. 

\medskip

\item If  $\kod(X)=0, 1, 2, n$, then both $\Psi$ and $\Upsilon$ are homeomorphic.

\end{enumerate}

\end{corollary}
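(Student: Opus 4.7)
The plan is to deduce the stated convergence results from the uniform bounds of Theorem \ref{main1} and Corollary \ref{maincor1}, combined with the (already established) local $C^0$-convergence $\omega(t)\to\omega_{\textnormal{can}}$ on $X^\circ$ from \cite{ST1,ST2,ST4}. The uniform diameter bound of Theorem \ref{main1} together with the two-sided volume estimate (\ref{s1vol}) gives a uniform doubling constant for the family $(X,g(t))$, so that Gromov's precompactness theorem yields, along any sequence $t_j\to\infty$ and after passing to a subsequence, a compact Gromov-Hausdorff limit $(X_\infty,d_\infty)$.

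Next I would construct the two Lipschitz maps. For $\Upsilon$, the pluricanonical pullback $\Phi^*\omega_{FS}$ lies in a positive multiple of $c_1(K_X)$, so a Schwarz-lemma type maximum principle argument along the flow yields $\Phi^*\omega_{FS}\le C\,\omega(t)$ uniformly in $t$. This means $\Phi:(X,g(t))\to (X_{\textnormal{can}},g_{FS})$ is uniformly Lipschitz, and taking the Gromov-Hausdorff limit produces the Lipschitz extension $\Upsilon$. For $\Psi$, the $C^0$-convergence on $X^\circ$ implies that for any $p,q\in\Xcr$ and any smooth curve $\tilde\gamma\subset X^\circ$ joining chosen preimages under $\Phi$, one has $L_{g(t_j)}(\tilde\gamma)\to L_{g_{\textnormal{can}}}(\Phi(\tilde\gamma))$. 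Minimizing over $\tilde\gamma$ yields $\limsup_j d_{g(t_j)}(p,q)\le d_{g_{\textnormal{can}}}(p,q)$, i.e.\ the identity on $\Xcr$ is $1$-Lipschitz from $d_Y$ to $d_\infty$ and therefore extends uniquely to the required Lipschitz map $\Psi:(Y,d_Y)\to(X_\infty,d_\infty)$.

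For part (2), the composition $\Upsilon\circ\Psi:Y\to X_{\textnormal{can}}$ agrees with the natural inclusion on the dense subset $\Xcr$ and is therefore equal to the natural continuous extension. In each of the listed Kodaira dimensions this extension is known to be a homeomorphism: $\kod(X)=0$ is trivial, $\kod(X)=n$ is \cite{S2,W}, and the $\kod(X)=1,2$ cases follow from the Ricci curvature bound on $X^\circ$ proved later in this paper together with the analysis of \cite{STZ,FGS}. Granted this, $\Psi$ is automatically injective and $\Upsilon|_{\Psi(Y)}$ is a homeomorphism onto $X_{\textnormal{can}}$. To complete the proof one needs surjectivity of $\Psi$ and injectivity of $\Upsilon$: the former follows from the fact that $X\setminus X^\circ$ has negligible relative volume along the flow, so every point of $X_\infty$ is approximated by points of $X^\circ$ and hence lies in $\Psi(Y)$, while the latter reduces to showing that each nonsingular fibre $\Phi^{-1}(p)$, $p\in\Xcr$, collapses to a point in the limit---this follows from the Ricci bound on nonsingular fibres, the vanishing of fibre volumes, and the relative volume comparison of \cite{Ba}.

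The main technical obstacle is the behavior near the singular set $\cS_{X_{\textnormal{can}}}$, where the Ricci bound fails. This is precisely the reason part (2) is restricted to the four listed Kodaira dimensions: in those cases $\cS_{X_{\textnormal{can}}}$ is geometrically simple enough (empty, finite, a curve, or a divisor in a surface) that, at large $t$, a neighborhood of $\cS_{X_{\textnormal{can}}}$ can be covered by metric balls of arbitrarily small total diameter, an iteration which depends crucially on the uniform volume doubling provided by Corollary \ref{maincor1}.
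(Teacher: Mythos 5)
Your argument for precompactness (Theorem \ref{main1} plus the two-sided volume estimate of Corollary \ref{maincor1} as input to Gromov precompactness) and your construction of the two Lipschitz maps match the paper's Lemmas in \S 5 fairly closely; the one thing to flag in the construction of $\Psi$ is that "minimizing over $\tilde\gamma$" silently requires that almost-minimizing curves for $d_Y(p,q)$ can be taken inside $X_{\textnormal{can}}^\circ$, which is exactly the almost geodesic convexity of $(X_{\textnormal{can}}^\circ, g_{\textnormal{can}})$ established in \cite{STZ}; the paper explicitly invokes this, and without it the step would not go through, since local $C^0$-convergence $g(t)\to\Phi^*g_{\textnormal{can}}$ only controls lengths of curves staying in a fixed compact subset of $X^\circ$.

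The genuine gap is in your treatment of part (2). You propose to handle $\kod(X)=1$ and $2$ via "the Ricci curvature bound on $X^\circ$ proved later in this paper," i.e.\ Theorem \ref{main2}, together with a claim that $\cS_{X_{\textnormal{can}}}$ is "geometrically simple enough (empty, finite, a curve, or a divisor in a surface)." This fails on two counts. First, Theorem \ref{main2} is proved only for $\kod(X)=0,1,n-1,n$; it does not cover $\kod(X)=2$ unless $n=2$ or $n=3$, so your proposed route would not even establish part (2) in the case $\kod(X)=2$ for general $n$. Second, for $\kod(X)=n$ the singular set $\cS_{X_{\textnormal{can}}}$ can be a subvariety of arbitrarily high dimension in the $n$-dimensional $X_{\textnormal{can}}$, so "divisor in a surface" is not the right picture. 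The paper's argument for part (2) (Lemma \ref{s62}) is entirely different in nature: it does not use the Ricci flow or any Ricci bound at all. It invokes \emph{static} structural results about the limit metric $g_{\textnormal{can}}$ itself — for $\kod(X)=n$, \cite{S2} shows the metric completion $(Y,d_Y)$ is homeomorphic to $X_{\textnormal{can}}$; for $\kod(X)=2$, this is Proposition~2.3 of \cite{STZ}, using that $X_{\textnormal{can}}$ is a KLT orbifold surface; for $\kod(X)\le 1$, $X_{\textnormal{can}}$ is a point or smooth curve. Once $(Y,d_Y)\cong X_{\textnormal{can}}$ is known, the sandwich $\Upsilon\circ\Psi$ forces $\Psi$ injective and $\Upsilon$ surjective, and the density of $X_{\textnormal{can}}^\circ$ in $X_\infty$ (which you correctly identify as following from Lemma \ref{smvol} together with volume non-collapsing) gives surjectivity of $\Psi$, completing the argument. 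You should replace the Ricci-bound route for $\kod=1,2$ with these static identification theorems for $(Y,d_Y)$.
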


Unfortunately, we are unable to identify $Y$ (or $X_\infty$) topologicially with the projective variety $X_{\textnormal{can}}$ in general cases.  We instead propose the following conjecture. 

\begin{songconj} $\Psi$ is an isometry and $\Upsilon$ is an homeomorphism.

\end{songconj}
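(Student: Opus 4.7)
The plan is to attack both claims simultaneously by reducing them to a single uniform length and diameter control of the normalized K\"ahler-Ricci flow in a shrinking tubular neighborhood of the critical locus $\cS_X = \Phi^{-1}(\cS_{X_{\textnormal{can}}})$. The main ingredients are the $C^0$ convergence $g(t)\to g_{\textnormal{can}}$ on $\Xr$ given by Corollary~\ref{maincor2}, the relative volume comparison of Corollary~\ref{maincor1}, and Bamler's parabolic segment and heat-kernel machinery from \cite{Ba}, adapted to the collapsing regime.

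For the isometry of $\Psi$, the first observation is that $\Psi$ is automatically $1$-Lipschitz, so only the reverse inequality $d_Y(y_1,y_2)\leq d_\infty(\Psi(y_1),\Psi(y_2))$ has to be proved, and by density of $\Xcr$ in $Y$ it suffices to take $y_1,y_2 \in \Xcr$. Choosing approximating sequences $p_i(t_j)\in \Xr$ with $\Phi(p_i(t_j))\to y_i$ and $g(t_j)$-minimizing geodesics $\gamma_j$ joining them, I would split each $\gamma_j$ into its portion inside $\Phi^{-1}(N)$ for a shrinking open neighborhood $N$ of $\cS_{X_{\textnormal{can}}}$ and the complementary portion. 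On the complement the $C^0$ convergence forces the $g(t_j)$-length to approach the $g_{\textnormal{can}}$-length of the projected curve in $(\Xcr \setminus N, g_{\textnormal{can}})$, and this tends to $d_Y(y_1,y_2)$ as $N\searrow \cS_{X_{\textnormal{can}}}$. The isometry of $\Psi$ then follows provided the $g(t_j)$-length of $\gamma_j\cap\Phi^{-1}(N)$ is $o(1)$ as $N\searrow \cS_{X_{\textnormal{can}}}$, uniformly in $j$.

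For the homeomorphism of $\Upsilon$, continuity and surjectivity are already in hand, so by compactness it is enough to prove injectivity, which in turn reduces to the assertion
\[
\lim_{r\to 0}\;\limsup_{j\to \infty}\; \diam_{g(t_j)}\!\bigl(\Phi^{-1}(B_{g_{FS}}(q,r))\bigr) = 0
\]
for every $q\in X_{\textnormal{can}}$. For $q\in \Xcr$ this is an immediate consequence of smoothness of $g_{\textnormal{can}}$ at $q$ together with the exponential collapsing of the Calabi-Yau fibres built into the normalization of (\ref{krflow}) and the twisted equation (\ref{twke}). For $q\in \cS_{X_{\textnormal{can}}}$ the required diameter control is again a small-length statement over a shrinking neighborhood of the singular set, of the same nature as the one appearing in the isometry argument.

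Both halves of the conjecture thus hinge on a single technical assertion: for every $\varepsilon>0$ there is a neighborhood $N$ of $\cS_{X_{\textnormal{can}}}$ such that every minimizing $g(t_j)$-geodesic of $(X,g(t_j))$ has total length at most $\varepsilon$ inside $\Phi^{-1}(N)$, uniformly for large $j$. The natural strategy is to combine the pluripotential smallness $\vol_{g(t_j)}(\Phi^{-1}(N))/\vol_{g(t_j)}(X)\to 0$ as $N\searrow \cS_{X_{\textnormal{can}}}$ (arising from the fact that $\cS_{X_{\textnormal{can}}}$ is a proper analytic subvariety of $X_{\textnormal{can}}$) with Bamler's segment inequality from \cite{Ba} to conclude that only a set of negligible measure of minimizing geodesics can enter $\Phi^{-1}(N)$, and then upgrade this to a uniform length bound using the diameter input of Theorem~\ref{main1}. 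The main obstacle will be executing the segment inequality in the collapsing regime $\vol_{g(t)}(X)\to 0$: the standard non-collapsing framework of \cite{Ba} does not apply, and the relative non-collapsing of Corollary~\ref{maincor1} must be paired with base-direction heat-kernel estimates that are not currently available without a Ricci curvature bound away from $\cS_X$, which in this paper is established only for $\kod(X)=1$.
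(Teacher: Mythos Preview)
The statement you are attempting to prove is stated in the paper as a \emph{conjecture}, not as a theorem; the paper gives no proof of it and explicitly says ``Unfortunately, we are unable to identify $Y$ (or $X_\infty$) topologically with the projective variety $X_{\textnormal{can}}$ in general cases. We instead propose the following conjecture.'' So there is no ``paper's own proof'' to compare against, and your proposal is an attempt at an open problem.

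That said, your outline is a sensible strategy and you have correctly isolated the heart of the matter: both the isometry of $\Psi$ and the injectivity of $\Upsilon$ reduce to a uniform-in-$t$ smallness of the $g(t)$-length (or diameter) of the portion of minimizing geodesics lying over a shrinking neighborhood of $\cS_{X_{\textnormal{can}}}$. You are also honest about the gap. The key technical assertion you need---that minimizing $g(t_j)$-geodesics spend length at most $\varepsilon$ in $\Phi^{-1}(N)$ once $N$ is small enough, uniformly in $j$---is precisely what is \emph{not} known in general. The volume smallness of $\Phi^{-1}(N)$ (your Lemma~\ref{smvol}-type input) combined with the relative volume comparison of Corollary~\ref{maincor1} does not by itself give a length or diameter bound; in the collapsed regime a region can have arbitrarily small relative volume while still having diameter of order one (think of a long thin tube). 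Bamler's segment inequality and heat-kernel bounds in \cite{Ba} are formulated for non-collapsed flows and give information about generic geodesics, not about an individual minimizing geodesic between two prescribed points; upgrading ``almost every geodesic avoids $\Phi^{-1}(N)$'' to ``the minimizing geodesic between $p_1$ and $p_2$ has short length in $\Phi^{-1}(N)$'' requires exactly the kind of local geometric control (e.g.\ a Ricci bound near regular fibres, or an almost-geodesic-convexity statement for $(X,g(t))$ analogous to Lemma~\ref{conv51}) that the paper only establishes when $\kod(X)\in\{0,1,n-1,n\}$ (Theorem~\ref{main2}) or when $\dim\cS_{X_{\textnormal{can}}}=0$. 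Your own final paragraph identifies this obstacle accurately; it is a genuine gap, not a technicality, and it is the reason the statement remains a conjecture.
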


Indeed, the conjecture holds when $\kod(X)=1$ \cite{TiZ2, STZ}, or more generally when $\dim \cS_{X_{\textnormal{can}}}=0$. Our second main result is to develop a local Ricci curvature estimate for the long-time solutions of the K\"ahler-Ricci flow. 

\begin{theorem} \label{main2} Let  $X$ be an $n$-dimensional K\"ahler manifold with  semi-ample canonical line bundle $K_X$. Let $g(t)$ be the long-time solution of the K\"ahler-Ricci flow (\ref{krflow}) with any initial K\"ahler metric $g_0$ for $t\in [0, \infty)$.  If   
$$\kod(X)=0, ~1,  ~ n-1, ~or~ n, $$
 then for any $\cK\subset\subset X^\circ$, there exists $C=C(g_0, \cK)>0$ such that for all $t\geq 0$, 
$$\sup_\cK |\ric(g(t))| \leq C. $$
\end{theorem}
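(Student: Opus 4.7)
The plan is to treat each of the four values of $\kod(X)$ separately, beginning with the two cases where smooth convergence of the flow is already known. When $\kod(X)=n$, Tsuji \cite{Ts1} and Tian-Zhang \cite{TiZha} show that $g(t)$ converges smoothly on every compact $\cK\subset\subset X^\circ$ to the K\"ahler-Einstein metric $g_{KE}$ on $X^\circ$, so $\ric(g(t))$ is uniformly bounded on $\cK$. When $\kod(X)=0$, Cao's theorem \cite{Ca} gives smooth convergence on all of $X$ to a Ricci-flat K\"ahler metric, so $|\ric(g(t))|\to 0$. Neither case requires anything beyond standard results.

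The substantive work is the two collapsing cases $\kod(X)=n-1$ and $\kod(X)=1$, which I would attack through local potential analysis. Fix $\cK\subset\subset X^\circ$ and a relatively compact open $U\subset X_{\textnormal{can}}^\circ$ containing $\Phi(\cK)$ over which $\Phi$ is a smooth submersion. Writing $\chi=\Phi^*\omega_{FS}$ and $\omega(t)=\omega_t+\ddbar\varphi$ for $\omega_t=e^{-t}\omega_0+(1-e^{-t})\chi$, the potential $\varphi$ satisfies
$$\ddt{\varphi}=\log\frac{(\omega_t+\ddbar\varphi)^n}{\Omega}-\varphi,\qquad -\ddbar\log\Omega=\chi.$$
The $C^0$ and $C^2$ estimates of Song-Tian \cite{ST1,ST2,ST4} give uniform bounds for $\varphi$, $\partial_t\varphi$ and for $\tr_{\omega(t)}\omega_{SF}$ on $\Phi^{-1}(U)$, where $\omega_{SF}$ is the semi-flat reference form that restricts to the Ricci-flat representative on each smooth fibre. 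This provides uniform equivalence of $\omega(t)$ and $\omega_{SF}$ on $\Phi^{-1}(U')$ for any $U'\subset\subset U$, and the remaining task is to upgrade this to a bound on $\ric(g(t))$.

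For $\kod(X)=n-1$ the smooth fibres of $\Phi$ are elliptic curves, hence flat K\"ahler tori. After rescaling the fibre direction by $e^t$, the restriction of $\omega(t)$ to each smooth fibre is asymptotically flat in period coordinates, so the only contribution to $\ric(g(t))$ that still needs treatment comes from base derivatives of $\varphi$ and from the variation of the period lattice; this is the torus-fibre analysis of Tian-Zhang \cite{TiZ2} already used in \cite{STZ}. For $\kod(X)=1$, the base $X_{\textnormal{can}}^\circ$ is a Riemann surface and the fibre is an $(n-1)$-dimensional Calabi-Yau manifold with in general nonzero curvature. Here the semi-flat step no longer yields the Ricci bound directly. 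My approach would be to combine the Monge-Amp\`ere estimates with Bamler's compactness and partial regularity theory \cite{Ba}: Theorem \ref{main1} and the relative volume comparison underlying Corollary \ref{maincor1} yield scale-invariant volume noncollapsing at each $p\in\cK$ once the fibre is rescaled to unit size, and the local product-like structure of $\Phi^{-1}(U)$ forces every tangent flow at $p$ to split as a smooth Ricci flow on the base Riemann surface times the stationary Ricci-flat metric on the fibre.

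The main obstacle is the $\kod(X)=1$ case. Unlike for torus fibres, the Calabi-Yau fibres carry genuine curvature and the collapse rate is anisotropic, so one cannot apply parabolic Schauder after a single uniform rescaling. The resolution rests on the one-complex-dimensionality of the base: the Weil-Petersson form on $X_{\textnormal{can}}^\circ$ reduces to a single smooth function on $U$, the base-fibre coupling in the parabolic Monge-Amp\`ere equation simplifies substantially, and, combined with the codimension-four structure of the singular set in any Ricci-flow limit from \cite{Ba}, this rules out non-smooth tangent flows at points of $\cK$ and yields the desired uniform bound on $\ric(g(t))$.
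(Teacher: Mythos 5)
Your handling of $\kod(X)=0$, $n$, and $n-1$ matches the paper: these are deferred to Cao, Tsuji/Tian--Zhang, and Tosatti--Weinkove--Yang respectively, and in all three cases the full curvature tensor is already known to be bounded on $\cK\subset\subset X^\circ$. The substantive content of the theorem is $\kod(X)=1$, and there your proposed route diverges from the paper and has a genuine gap. You want to invoke Bamler's compactness and partial regularity (codimension-four singular sets, classification of tangent flows) to rule out blow-up of $\ric$ on $\cK$. But Bamler's partial regularity and tangent-flow theory require a uniform noncollapsing hypothesis, equivalently a uniform lower bound on the pointed Nash entropy of the \emph{unrescaled} flow. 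In the collapsing regime $0<\kod(X)<n$ the volume decays like $e^{-(n-\kappa)t}$ and the Nash entropy drifts to $-\infty$, so the hypothesis fails and that machinery simply does not apply to the flow as given. (The paper does use \cite{Ba}, but only for the \emph{relative} volume comparison of Proposition \ref{volcomp1}, where the collapsing factors cancel; it never invokes the partial regularity theory on the collapsing flow.) Moreover even if one had such a structure theorem, the inference from ``every tangent flow splits as base $\times$ stationary fibre'' to a \emph{uniform, time-independent} bound on $|\ric|$ on a fixed $\cK$ is not justified, and the splitting claim itself is far from standard: recall the paper notes that the full curvature tensor genuinely blows up near nonflat Calabi--Yau fibres (a high-genus curve times a nonflat CY is the example given), so there is no uniform scale at which the flow near $p$ looks product-like.

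The paper's actual argument for $\kod(X)=1$ is a concrete maximum-principle scheme that exploits the one-dimensionality of the base in a different way than your proposal suggests. First (Section 6), writing $u=\partial_t\varphi+\varphi$ for the quasi-Ricci potential and $\bar u$ for its fibrewise average, a delicate iteration (quantities $A_m$ with $A_{m+1}\leq\frac12 A_m+F$) upgrades the crude $|u-\bar u|\lesssim e^{-t/2}$ to the exponential decay $|u-\bar u|+|\nabla^F u|^2\leq Ce^{-t}$ on compact subsets of $X^\circ$. Second (Section 7), one introduces $P=g^{i\bar j}g_F^{\alpha\bar\beta}R_{i\bar\beta}R_{\alpha\bar j}$, a partial Ricci norm containing all components with at least one fibre index; its evolution carries a bad term of order $e^t P$ from the curvature growth, and the quantity $H_2=\frac{e^{-t}(1+P)^{1-\delta}}{Be^{-t}+(u-\bar u)}$, combined with $H_1=\frac{|\nabla^F u|^2}{Be^{-t}+(u-\bar u)}+\tr_\omega(\chi)$ and a cutoff $\rho^l$, is exactly calibrated so that the exponentially small denominator absorbs the $e^t$ growth. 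This gives a uniform bound on $P$, i.e.\ on all mixed and fibre components of $\ric$. The role of $\kod(X)=1$ is then that there is only a single base--base component $R_{n\bar n}$, which is recovered from the already-bounded scalar curvature minus the fibre traces; with a higher-dimensional base this last step fails, which is precisely why the theorem stops at $\kod(X)=1$ rather than covering all intermediate Kodaira dimensions.
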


The cases of $\kod(X)=0$ and $n$ In Theorem \ref{main2} are well-known from \cite{Ca} and \cite{Ts1}, while the case of $\kod(X)=n-1$ is due to \cite{TWY} because the general fibres of $X$ over $X_{\textnormal{can}}$ are complex tori. The curvature tensors are uniformly bounded away from critical points of $\Phi: X \rightarrow X_{\textnormal{can}}$ in all of the above three cases. In general dimensions, when the the regular fibers are biholomorphic to each other,   higher-order estimates are proved in \cite{FL} on the regular part (and hence the Ricci curvature estimate). Such estimates are extended for the convergence in \cite{JS}. In general, the curvature tensor always blow up along the flow as long as the general fibres are not complex tori. A simple example would be the product of a high genus Riemann surface and a non-flat Calabi-Yau manifold. However, we do expect Theorem \ref{main2} to hold for any nonnegative Kodaira dimensions, even in the case that when the regular fibers are not biholomorphic to each other.

Finally, we can give a fairly complete description for limiting behavior of the long-time solutions of the K\"ahler-Ricci flow on  minimal threefolds by applying Theorem \ref{main1}, Corollary \ref{maincor2} and Theorem \ref{main2}.

\begin{corollary} \label{maincor3} Let $X$ be a minimal threefold. Let $g(t)$ be the long-time solution of the K\"ahler-Ricci flow (\ref{krflow}) with any initial K\"ahler metric $g_0$ for $t\in [0, \infty)$. Then there exists $A=A(g_0)>0$ such that  for all $t\in [0, \infty)$, 
$$
\sup_X |\sR(t)| + \textnormal{diam}(X, g(t)) \leq A, 
$$
and  for any $\cK\subset\subset X^\circ$, %
there exists $B=B(g_0, \cK)>0$ satisfying
$$\sup_{\cK \times[0, \infty)} |\ric| \leq B.$$
Furthermore, for any $t_j \rightarrow \infty$, after passing to subsequence, $(X, g(t_j))$ converges in Gromov-Hausdorff topology to a compact metric space $(X_\infty, d_\infty)$ homeomorphic to the canonical model $X_{\textnormal{can}}$ of $X$.

\end{corollary}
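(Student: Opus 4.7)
The plan is to assemble Corollary \ref{maincor3} from the three main results already stated (Theorem \ref{main1}, Corollary \ref{maincor2}, and Theorem \ref{main2}), so the work is essentially verifying that every hypothesis is met when $\dim X = 3$. First I would invoke abundance in dimension three \cite{K2, CHP}: since $X$ is a minimal threefold, $K_X$ is nef, and by the abundance theorem $K_X$ is in fact semi-ample. This is the gateway step because every earlier result in the paper is stated under the semi-ampleness assumption. Once semi-ampleness is secured, the pluricanonical map $\Phi : X \to X_{\textnormal{can}}$ and the open sets $X^\circ, X_{\textnormal{can}}^\circ$ are defined, and $\kod(X) \in \{0,1,2,3\}$.

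Next, the scalar curvature plus diameter bound is immediate from Theorem \ref{main1}, which yields the constant $A = A(g_0)$ with
\[
\sup_X |\sR(t)| + \diam_{g(t)}(X) \leq A
\]
for all $t \geq 0$. For the local Ricci bound I would simply apply Theorem \ref{main2}, observing that $n = 3$ forces $\kod(X) \in \{0, 1, 2, 3\} = \{0, 1, n-1, n\}$, which is exactly the list of Kodaira dimensions Theorem \ref{main2} handles. Hence for any $\cK \subset\subset X^\circ$ one obtains $B = B(g_0, \cK)$ with $\sup_{\cK \times [0, \infty)} |\ric(g(t))| \leq B$.

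Finally, for the Gromov-Hausdorff convergence I would feed the diameter bound from Theorem \ref{main1} and the volume comparison from Corollary \ref{maincor1} into Corollary \ref{maincor2}. Given any sequence $t_j \to \infty$, Corollary \ref{maincor2} furnishes, after passing to a subsequence, a Gromov-Hausdorff limit $(X_\infty, d_\infty)$ together with the maps $\Psi : Y \to X_\infty$ and $\Upsilon : X_\infty \to X_{\textnormal{can}}$. Since $\kod(X) \in \{0, 1, 2, n\}$ in our setting, part (2) of Corollary \ref{maincor2} asserts that both $\Psi$ and $\Upsilon$ are homeomorphisms, so $X_\infty$ is homeomorphic to the canonical model $X_{\textnormal{can}}$, completing the proof.

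There is no real technical obstacle here beyond the bookkeeping above; the entire content of the corollary is that threefolds happen to lie inside the Kodaira dimension range where each of the three earlier statements is unconditionally available, and abundance in dimension three is what allows the transition from ``$K_X$ nef'' to ``$K_X$ semi-ample.'' If anything, the one point to state carefully is that the $C^0(X^\circ)$ convergence to $g_{\textnormal{can}}$ from Corollary \ref{maincor2} is consistent with the subsequential Gromov-Hausdorff limit being homeomorphic, rather than isometric, to $X_{\textnormal{can}}$ — the homeomorphism type, not the isometry type, is all that is claimed.
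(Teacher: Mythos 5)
Your proposal is correct and takes essentially the same route the paper intends: the introduction states that Corollary \ref{maincor3} follows by applying Theorem \ref{main1}, Corollary \ref{maincor2}, and Theorem \ref{main2}, and you have simply made the gateway step explicit by invoking abundance in dimension three to pass from $K_X$ nef to $K_X$ semi-ample, and checking that $\kod(X)\in\{0,1,2,3\}$ lands in both the $\{0,1,n-1,n\}$ list of Theorem \ref{main2} and the $\{0,1,2,n\}$ list of Corollary \ref{maincor2}(2). No gap.
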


We give a brief outline of the paper. In $\S 2$, we give the background for long-time solutions of the K\"ahler-Ricci flow and some basic analytic and geometric estimates. In $\S3$, we apply the work of \cite{Ba} to derive a relative volume comparison for the Ricci flow. We establish the uniform diameter bound for long-time solutions of the K\"ahler-Ricci flow in $\S 4$. In $\S 5$, we prove the geometric convergence.  A fibrewise $C^0$-estimate is achieved for the Ricci potentials along the K\"ahler-Ricci flow in $\S 6$ and the local uniform bound for the Ricci curvature is established in $\S 7$.


\section{Basic estimates}

Let $X$ be an $n$-dimensional  K\"ahler manifold. $X$ is called a minimal model if the canonical bundle $K_X$ is nef. The abundance conjecture in birational geometry predicts that $K_X$ must be semi-ample if $K_X$ is nef. 

From now on, we will assume that   $K_X$ is semi-ample. The canonical ring $R(X, K_X)$ is therefore finitely generated, and the pluricanonical system $|mK_X |$  induces a holomorphic map
\begin{equation}\label{plurican}
\Phi: X\rightarrow X_{\textnormal{can}} \subset \mathbb{P}^N
\end{equation}
for sufficiently large $m\in \mathbb{Z}^+$, where $X_{\textnormal{can}}$ is the canonical model of $X$. In fact, both $\Phi$ and $X_{\textnormal{can}}$ are uniquely determined independent of the choice of $m$. The Kodaira dimension of $X$ is defined to be %
\begin{equation}
\kod(X) = \dim X_{\textnormal{can}}.
\end{equation}
We always have
$$ 0\leq \kod(X) \leq \dim X =n. $$
In particular,
\begin{enumerate}

\item If $\kod(X) =n$, $X$ is birationally equivalent to its canonical model $X_{\textnormal{can}}$, and $X$ is called a minimal model of general type.

\medskip

\item If $0<\kod(X)<n$, $X$ admits a Calabi-Yau fiberation $$\pi: X \rightarrow X_{\textnormal{can}}$$ over $X_{\textnormal{can}}$ and a general fibre is a smooth Calabi-Yau manifold of complex dimension $n-\kod(X)$.

\medskip

\item If $\kod(X)=0$, $X_{\textnormal{can}}$ is a point and $X$ is a Calabi-Yau manifold with $c_1(X)=0$.

\end{enumerate}

Now we will reduce the normalized K\"ahler-Ricci flow to a parabolic Monge-Amp\`ere equation. Let $\mathcal{O}_{\mathbb{P}^N}(1)$ be the hyperplane bundle over $\mathbb{P}^N$ in (\ref{plurican}) and $\omega_{FS}\in [\mathcal{O}_{\mathbb{P}^N}(1)]$ be a Fubini-Study metric on $\mathbb{P}^N$. Then there exists $m>0$ such that $$m K_X = \pi^* \mathcal{O}_{\mathbb{P}^N}(1).$$  We define
$$\chi = \frac{1}{m} \pi^* \omega_{FS} \in [K_X]$$ and $\chi$ is a smooth nonnegative closed $(1,1)$-form on $X$. There also exists a smooth volume form $\Omega$ on $X$ such that $$\ric (\Omega)= -\ddbar \log \Omega = - \chi.$$

Let $\omega_0$ be the initial K\"ahler metric of the normalized K\"ahler-Ricci flow (\ref{krflow}) on $X$. Then K\"ahler class evolving along the normalized K\"ahler-Ricci flow is given by $$[\omega(t)] = (1-e^{-t}) [K_X] + e^{-t} [\omega_0]$$ and so $[\omega(t)]$ is a K\"ahler class for all $t\in [0, \infty)$. Therefore the normalized K\"ahler-Ricci flow starting with $\omega_0$ on $X$ has a smooth global solution on $X \times [0, \infty)$. We define the reference metric $$\omega_t = (1-e^{-t}) \chi + e^{-t} \omega_0.$$
 Then the K\"ahler-Ricci flow is equivalent to the following Monge-Ampere flow.
\begin{equation}\label{maflow}
\ddt{\varphi} = \log \frac{ e^{(n-\kappa)t} (\omega_t + \ddbar \varphi)^n } {\Omega} - \varphi,
\end{equation}
where 
$$\omega_t = \chi + e^{-t} ( \omega_0 - \chi) $$
 and $$ \kappa = \kod(X)=\dim X_{\textnormal{can}}.$$ 
We define $\cS_{X_{\textnormal{can}}}$ be the critical values of $\Phi$ and let 
\begin{equation}\label{singset}
X_{\textnormal{can}}^{\circ} = X_{\textnormal{can}} \setminus \cS_{X_{\textnormal{can}}}, ~~ X^\circ = \Phi^{-1}(X_{\textnormal{can}}^\circ), ~\cS_X = \Phi^{-1}(\cS_{X_{\textnormal{can}}}). 
\end{equation}

When $\kappa=\kod(X)=n$, $X$ is said to be a minimal model of general type. In this case, $\Phi: X \rightarrow X$ is a birational map. Then $\cS_X$ and $\cS_{X_{\textnormal{can}}}$ are subvarieties of $X$ and $X_{\textnormal{can}}$. Hence $X^\circ = X_{\textnormal{can}}^\circ$ is a Zariski open set of $X$ or $X_{\textnormal{can}}$. 
The following lemma is a collection of results from \cite{Ts1, TiZha, W}. 
\begin{lemma} Let $g(t)$ be the long-time solution of the K\"ahler-Ricci flow (\ref{krflow}). Then $g(t)$ converges smoothly to a unique K\"ahler-Einstein metric $g_{KE}$ on $X_{\textnormal{can}}^\circ$.  Futhermore, the scalar curvature and diameter of $g(t)$ are uniformly bounded and $g_{KE}$ extends to a unique K\"ahler current $\omega_{KE}$ on $X_{\textnormal{can}}$ with bounded local potentials. 

\end{lemma}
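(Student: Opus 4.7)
The plan is to reduce the normalized K\"ahler-Ricci flow to the parabolic complex Monge-Amp\`ere equation (\ref{maflow}) and extract each conclusion from the standard toolkit. Since $\kappa=\kod(X)=n$, the reference form $\chi$ lies in the big and semi-positive class $[K_X]$, and the formal $t\to\infty$ limit of (\ref{maflow}) is the degenerate complex Monge-Amp\`ere equation $(\chi+\ddbar\varphi_\infty)^n = e^{\varphi_\infty}\Omega$ on $X^\circ$.

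First I would establish uniform bounds $\|\varphi\|_{L^\infty(X)}+\|\partial_t\varphi\|_{L^\infty(X)}\le C$. The upper bound on $\varphi$ and two-sided control of $\partial_t\varphi$ come from the parabolic maximum principle applied to the evolution equations for $\varphi$ and for $\partial_t\varphi$; the lower bound on $\varphi$ is the main analytic input, and follows from Kolodziej's $L^\infty$-estimate (in the form extended to big, semi-positive reference classes) applied at each time $t$ to the Monge-Amp\`ere equation with $L^p$-controlled right-hand side. Away from $\cS_X$, $\chi$ is a smooth K\"ahler metric, so on any $\cK\subset\subset X^\circ$ the parabolic Aubin-Yau/Chern-Lu inequality yields a local $C^2$-estimate, and Calabi's third-order bound together with parabolic Schauder give uniform local $C^\infty$ bounds on $\varphi$. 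Subsequential limits then produce a smooth K\"ahler-Einstein metric $g_{KE}$ on $X^\circ$; uniqueness of bounded solutions of the limit Monge-Amp\`ere equation, via the Bedford-Taylor/Kolodziej comparison principle, upgrades this to full convergence $g(t)\to g_{KE}$ in $C^\infty_{\mathrm{loc}}(X^\circ)$.

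For the scalar curvature bound I would follow Zhang's argument: the identity relating $\sR$ to $u=\partial_t\varphi$ and $\partial_t u$ obtained by differentiating (\ref{maflow}) in $t$, combined with maximum-principle analysis of the evolution equation for $u$, yields $\|\sR\|_{L^\infty(X)}\le C$. For the diameter estimate I would invoke Wang's argument, which couples this scalar curvature control with Perelman's $\kappa$-non-collapsing and a contradiction argument ruling out sequences of points escaping to infinite distance, leveraging the $L^\infty$-bound on the Ricci potential in an essential way. Finally, since $\varphi_\infty\in\mathrm{PSH}(X,\chi)\cap L^\infty(X)$ by the uniform $L^\infty$-bound, the form $\omega_{KE}:=\chi+\ddbar\varphi_\infty$ defines a closed positive $(1,1)$-current with bounded local potentials on $X$, and because $\Phi$ is birational in the general-type case it descends to such a current on $X_{\textnormal{can}}$; uniqueness follows again from the comparison principle.

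The main obstacle is the diameter bound: pluripotential theory and the maximum principle yield convergence on $X^\circ$ and the scalar curvature estimate rather mechanically, but controlling the diameter as $g(t)$ degenerates along the exceptional locus $\cS_X$ requires the delicate Perelman-style non-collapsing and excess-estimates argument adapted to the degenerate geometry near $\cS_X$.
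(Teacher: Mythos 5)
Your proposal tracks the paper's (citation-based) proof closely: the paper itself simply collects the needed facts with attributions --- local smooth convergence to Tsuji, $L^\infty$ bounds on the potential to Tian--Zhang relying on Kolodziej and Eyssidieux--Guedj--Zeriahi, the scalar curvature bound to Zhang, and the diameter bound to Wang together with the geometric structure result of Song \cite{S2} --- and your sketch of the potential estimates, the local $C^\infty$ convergence, and the scalar curvature argument corresponds accurately to those references.

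Where your sketch deviates is in the diameter bound, which you correctly flag as the main obstacle. You describe Wang's argument as ``Perelman's $\kappa$-non-collapsing and a contradiction argument ruling out points escaping to infinity,'' leveraging the $L^\infty$ bound on the Ricci potential. This is not quite what happens. First, the key analytic tool Wang develops in \cite{W} is the \emph{local entropy} functional for the Ricci flow, a refinement that localizes Perelman's monotonicity and gives volume lower bounds on balls without requiring global $\kappa$-non-collapsing in the Perelman sense along a degenerating family. Second --- and this is the input your sketch omits entirely --- the argument crucially relies on the result of \cite{S2} that the metric completion of $(X_{\textnormal{can}}^\circ, g_{KE})$ is a compact length space homeomorphic to the projective variety $X_{\textnormal{can}}$. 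Without the a priori compactness of that limit space, the contradiction argument cannot close: the local entropy bounds show that points near the degenerating set $\cS_X$ cannot carry too much volume, but one still needs to know that the ``good'' region $X^\circ$ has uniformly bounded diameter in the limit, and this is exactly what the global Riemannian geometric structure theorem of \cite{S2} supplies. So the diameter estimate is a coupling of a flow-theoretic non-collapsing tool with a static pluripotential/metric-geometry compactness result, rather than a purely parabolic argument. This distinction matters because it is precisely the analogous compactness statement for $(X_{\textnormal{can}}^\circ, g_{\textnormal{can}})$ in the fibred case (proved in \cite{STZ}, cited as Lemma \ref{conv51}) that drives the new diameter bound of the present paper.
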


\begin{proof} The local smooth convergence is proved in \cite{Ts1}. The boundedness of local potentials of $\omega_{KE}$ is proved in \cite{TiZha} built on the work of \cite{Kol1, Zh06, EGZ}. The scalar curvature bound is derived in \cite{Zh}.   The diameter bound is recently proved in \cite{W} by developing and applying the local entropy for the Ricci flow  with the Riemannian geometric structure of $X_{\textnormal{can}}$ from \cite{S2}. 
\end{proof}

When $\kappa=\kod(X)=0$, $X$ is a Calabi-Yau manifold. The exponential convergence of the unnormalized K\"ahler-Ricci flow to a unique Ricci-flat K\"ahler metric is established in \cite{Ca}. In particular, the solution $g(t)$ of the normalized K\"ahelr-Ricci flow (\ref{krflow}) will converge to a point  with uniformly bounded Ricci curvature. 

 Therefore it suffices for us to study the case when $0< \kappa= \kod(X) < n$. In this case, $\cS_X$ is the set of all singular fibres of $\Phi: X \rightarrow X_{\textnormal{can}}$ and the K\"ahler-Ricci flow will collapse.

 The following general testimates are established in \cite{ST1, ST2, ST4}. 
\begin{lemma} \label{c0}

Let $g(t)$ be the long-time solution of the K\"ahler-Ricci flow (\ref{krflow}). There exists $C>0$ such that on $X\times [0, \infty)$,
\begin{equation}
|\varphi| + \tr_{\omega}(\chi) + \left|\ddt{\varphi} \right| + \left| \nabla \ddt{\varphi} \right| + \left|\Delta \ddt{\varphi}\right| + |\sR| \leq C, 
\end{equation}
where $\sR$ is the scalar curvature of $g(t)$,  $\nabla$ and $\Delta$  are the gradient and Laplace operators with respect to $g(t)$.
\end{lemma}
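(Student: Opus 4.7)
The plan is to establish the listed estimates in the order in which each one becomes accessible, since later estimates rely on the earlier ones. All bounds here are already present in \cite{ST1, ST2, ST4}, so the proposal is essentially to organize the by-now standard maximum-principle toolkit for the parabolic Monge-Amp\`ere equation (\ref{maflow}) and indicate how each estimate is obtained.

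First I would bound $|\varphi|$. Setting $\psi = \varphi - (n-\kappa)t$ or an analogous time-shift is not quite enough because $\omega_t$ degenerates in the fibre directions of $\Phi$; instead, after rewriting (\ref{maflow}) as a genuine Monge--Amp\`ere equation with the reference $\chi + e^{-t}(\omega_0 - \chi)$, I would use Ko\l{}odziej/EGZ-type $L^\infty$ estimates (as adapted to this parabolic setting in \cite{ST1}) combined with the maximum principle applied to $\varphi$ and to $\dot\varphi + \varphi$. At a spatial maximum of $\varphi$ one has $\omega \leq \omega_t$, giving an upper bound via $\int_X \omega_t^n \leq C e^{-(n-\kappa)t}$; at a minimum the sign flips. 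Once $|\varphi| \leq C$ is in hand, the uniform bound on $\dot\varphi$ follows by applying the parabolic maximum principle to $\dot\varphi$ and to $\dot\varphi + \varphi$, using that $(\partial_t - \Delta)\dot\varphi = -\dot\varphi - n - e^{-t}\tr_\omega(\omega_0 - \chi) + (n-\kappa)$ and similar identities derived by differentiating (\ref{maflow}) in $t$.

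Next I would control $\tr_\omega(\chi)$ by a parabolic Schwarz lemma in the style of Song--Tian. Since $\chi$ is the pullback of the Fubini--Study metric by $\Phi$, its bisectional curvature is bounded above. A direct Bochner computation shows that $(\partial_t - \Delta) \log \tr_\omega(\chi) \leq C \tr_\omega(\chi) + C$, and then the maximum principle applied to $\log\tr_\omega(\chi) - A\varphi$ for $A$ sufficiently large absorbs the bad term using $\Delta \varphi = n - \tr_\omega(\omega_t) \leq n - (1-e^{-t}) \tr_\omega(\chi)$. This gives the uniform upper bound on $\tr_\omega(\chi)$.

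With $|\varphi|$, $|\dot\varphi|$, and $\tr_\omega(\chi)$ controlled, I would obtain $|\nabla \dot\varphi|$ and $|\Delta \dot\varphi|$ by applying the parabolic maximum principle to the standard auxiliary functions $|\nabla \dot\varphi|^2 / (A - \dot\varphi)$ and $(\Delta \dot\varphi + \dot\varphi + n)^2$, using the evolution equations $(\partial_t - \Delta) \dot\varphi$, $(\partial_t - \Delta)|\nabla \dot\varphi|^2$, and $(\partial_t - \Delta) \Delta \dot\varphi$, with the Ricci-form term in the Bochner identity absorbed by the good terms coming from $-|\nabla \dot\varphi|^2$ via a sufficiently large coefficient. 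Finally, the scalar-curvature bound is algebraic: tracing the identity $\ddbar \dot\varphi = -\ric(\omega) - \omega + e^{-t}(\omega_0 - \chi)$ yields $\sR = -n - \Delta \dot\varphi + e^{-t}\tr_\omega(\omega_0 - \chi)$, so $|\sR| \leq C$ follows from the bounds on $\Delta \dot\varphi$ and $\tr_\omega(\omega_0 - \chi)$, the latter reducing to $\tr_\omega(\chi)$ and a comparison using the already-bounded $\tr_\omega(\chi)$ via $\omega_0 \leq C(\chi + \omega_0)$. The main technical obstacle is the degeneracy of $\omega_t$ along the fibres of $\Phi$, which is what makes the $C^0$-estimate for $\varphi$ (and hence the entire chain) nontrivial; this is precisely the point at which Ko\l{}odziej's $L^\infty$-estimate is needed.
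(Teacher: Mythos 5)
The paper itself offers no proof of this lemma; it simply states ``the following general estimates are established in \cite{ST1, ST2, ST4}''. So you are reconstructing arguments that the authors chose to cite rather than reprove, and the right yardstick is whether your sketch matches the strategy of those references. Broadly it does --- the toolbox you list (Kolodziej-type $L^\infty$ estimate for $\varphi$, parabolic Schwarz lemma for $\tr_\omega(\chi)$, second-order maximum principle \`a la Perelman for $|\nabla\dot\varphi|$ and $\Delta\dot\varphi$, and the algebraic identity $\sR = -n - \Delta\dot\varphi + e^{-t}\tr_\omega(\omega_0-\chi)$) is exactly the one used there, and the algebraic reduction of the scalar curvature bound to the $\Delta\dot\varphi$ and trace bounds is correct.

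That said, there are two places where your sketch, taken at face value, would not close. First, in the $C^0$-estimate: at a spatial maximum of $\varphi$ the inequality $\omega \le \omega_t$ together with the Monge--Amp\`ere equation yields a bound on $\dot\varphi + \varphi$ at that point, not on $\varphi$ itself; so the maximum-principle argument you describe is circular unless a lower bound on $\dot\varphi$ is available first. In the intermediate Kodaira dimension case ($0<\kod(X)<n$), where the class $[\omega_t]$ collapses along fibres, the Kolodziej/EGZ estimate also cannot be applied directly on $X$: in \cite{ST1,ST2} the Monge--Amp\`ere measure is pushed forward to the canonical model and Kolodziej's estimate is used downstairs, and the $C^0$ bound for $\varphi$ and the lower bound on $\dot\varphi + \varphi$ are obtained in a coupled way rather than sequentially as you order them. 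Second, the gradient and Laplacian estimates in \cite{ST4} are carried out for the quasi-Ricci potential $u = \dot\varphi + \varphi$, not for $\dot\varphi$; the crucial point is that $u$ satisfies the clean evolution $(\partial_t - \Delta) u = \tr_\omega(\chi) - \kappa$ (equation (\ref{s3evu}) in this paper), and the Perelman-style auxiliary quantities one runs the maximum principle on are of the form $|\nabla u|^2/(B-u)$ and $(-\Delta u)/(B-u)$. Since $|\varphi|$ is bounded, $u$ and $\dot\varphi$ have the same $C^0$ bound but their gradients differ by $\nabla\varphi$, which is not separately controlled at this stage, so working with $\dot\varphi$ instead of $u$ in the quotient quantities will not give you the cancellations you need. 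If you replace $\dot\varphi$ by $u$ throughout Step 3 of your sketch, and note that the $C^0$ bootstrap in the collapsing case proceeds via the base rather than directly, the outline matches the cited references.
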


The following lemma is proved in \cite{ST1, ST2} for convergence of the collapsing K\"ahler-Ricci flow and its limiting metric. 

\begin{lemma}   Suppose $0<\kappa=\kod(X)<n$.  Then $g(t)$ converges as a current to a unique smooth twisted K\"ahler-Einstein metric $g_{\textnormal{can}}$ on $X_{\textnormal{can}}^\circ$ satisfying the following. 
\begin{enumerate}
\item $g_{\textnormal{can}}$ satisfies the twisted K\"ahler-Einstein equation $$\ric(g_{\textnormal{can}})= - g_{\textnormal{can}}+g_{WP}$$ on $X_{\textnormal{can}}^\circ$, where $g_{WP}$ is the Weil-Petersson metric for the variation of Calabi-Yau fibres of $X^\circ$ over $X_{\textnormal{can}}^\circ$. 

\medskip

\item The K\"ahler form associated to $g_{\textnormal{can}}$ extends to a unique K\"ahler current $\omega_{\textnormal{can}}$ on $X_{\textnormal{can}}$ with bounded local potentials. 

\end{enumerate}

\end{lemma}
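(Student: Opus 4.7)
The plan is to exploit the parabolic Monge--Amp\`ere equation (\ref{maflow}) and pass to a limit as $t\to\infty$, interpreting the limit as an elliptic complex Monge--Amp\`ere equation on the base $X_{\textnormal{can}}^\circ$. Because $\chi=\Phi^*\omega_{FS}/m$ is the pullback of a K\"ahler form from $X_{\textnormal{can}}$, the generically semi-positive form $\chi$ has exactly rank $\kappa$ on $X_{\textnormal{can}}^\circ$, so the factor $e^{(n-\kappa)t}$ in (\ref{maflow}) precisely balances the degeneration of $\omega_t^n$ in fibre directions. First I would fix, on any relatively compact open set $U\subset X_{\textnormal{can}}^\circ$, a smooth K\"ahler form $\omega_U$ on $U$ with $\chi|_{\Phi^{-1}(U)}=\Phi^*\omega_U$ (possibly after shrinking $U$ and modifying $\chi$ by a smooth potential), and push forward: for each $y\in U$ restrict $\omega_t+\ddbar\varphi$ to the smooth fibre $X_y=\Phi^{-1}(y)$, use that its cohomology class on $X_y$ tends to zero, and apply Yau's theorem fibrewise to compare $(\omega_t+\ddbar\varphi)|_{X_y}$ with the unique Ricci-flat K\"ahler metric in its (shrinking) class.

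Next I would use Lemma \ref{c0}, which gives uniform $C^0$ bounds on $\varphi$ and $\dot\varphi$, to extract subsequential limits. Writing $\varphi=\Phi^*u+\psi$ where $u$ is the fibrewise average and $\psi$ has fibre-integral zero, the $C^0$ bound on $\dot\varphi$ forces $\psi\to 0$ and $u$ to converge to a function $u_\infty$ on $X_{\textnormal{can}}^\circ$. Taking $t\to\infty$ in (\ref{maflow}) and using the fibrewise Calabi--Yau normalisation of the volume form $\Omega$ transforms the parabolic equation into the elliptic equation
\begin{equation*}
(\omega_U+\ddbar u_\infty)^\kappa = e^{u_\infty}\,F\,\omega_U^\kappa\quad\text{on }U,
\end{equation*}
where $F$ is the smooth positive function encoding the Calabi--Yau fibre volumes. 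Elliptic regularity then gives smoothness of $u_\infty$ and of the limit K\"ahler metric $\omega_{\textnormal{can}}:=\omega_U+\ddbar u_\infty$ on $X_{\textnormal{can}}^\circ$; uniqueness of $u_\infty$ follows from a standard maximum principle comparison for this Monge--Amp\`ere equation, which also identifies the full sequential limit.

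To obtain the twisted K\"ahler--Einstein identity, I would apply $\ddbar\log$ to both sides and use that $-\ddbar\log F=\omega_{WP}$ along $X_{\textnormal{can}}^\circ$; this is the fibrewise variation formula of Fujiki--Schumacher/Tian, in which the Hodge-theoretic derivative of the fibre volume forms is exactly the Weil--Petersson form. Combined with $\ddbar\log\det(g_{\textnormal{can}})=-\ric(g_{\textnormal{can}})$ this yields $\ric(g_{\textnormal{can}})=-g_{\textnormal{can}}+g_{WP}$ on $X_{\textnormal{can}}^\circ$.

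Finally, for (2), I would globalise: the pushforward $\Phi_*\bigl(e^{(n-\kappa)t}(\omega_t+\ddbar\varphi)^n/\Omega\bigr)$ has a density on $X_{\textnormal{can}}$ that lies uniformly in $L^p$ for some $p>1$ (as its only singularities come from the discriminant divisor of $\Phi$, and are of log-type). Applying Ko\l odziej's $L^\infty$-estimate to the elliptic Monge--Amp\`ere equation on $X_{\textnormal{can}}$ with respect to a fixed K\"ahler form in $[K_X]$ yields a uniform bound for the global potential of $\omega_{\textnormal{can}}$, which extends across $\cS_{X_{\textnormal{can}}}$ as a bounded $\omega$-psh function, defining the K\"ahler current $\omega_{\textnormal{can}}$ on $X_{\textnormal{can}}$. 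The main obstacle is this last step: controlling the $L^p$-norm of the density uniformly in $t$ near the discriminant locus, which requires a careful use of the semi-ample structure of $K_X$ and the boundedness of the Ricci potential from Lemma \ref{c0} to bootstrap from the fibrewise Calabi--Yau estimate to a global integrability statement.
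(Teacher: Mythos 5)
This lemma is not proved in the paper; the text states ``The following lemma is proved in \cite{ST1, ST2}'' and moves on. Your outline is therefore being compared against the strategy of Song--Tian rather than against an argument in the paper itself, and on that basis it tracks the right route: reduce to an elliptic complex Monge--Amp\`ere equation on $X_{\textnormal{can}}$ in the class $[\chi]$, identify the right-hand density as a fibre-integral involving the Calabi--Yau volumes, obtain the Weil--Petersson twist from $\ddbar\log$ of that density (Fujiki--Schumacher/Tian), and apply Kolodziej's $L^\infty$-estimate using that the density is uniformly in $L^{1+\epsilon}$, its singularities along the discriminant being of log/Hodge-theoretic type. The one step you should not wave through is ``the $C^0$ bound on $\dot\varphi$ forces $\psi\to 0$'': boundedness of $\dot\varphi$ alone controls neither the fibrewise oscillation of $\varphi$ nor its decay. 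What actually kills $\psi$ is positivity along the fibre together with the collapsing of the fibre class --- since $\chi$ pulls back from the base, $\omega_t|_{X_y}=e^{-t}\omega_0|_{X_y}$, so $\psi|_{X_y}$ is $e^{-t}\omega_0|_{X_y}$-plurisubharmonic with zero fibre average, and a Green's function (or parabolic Schwarz) argument then gives $\textnormal{osc}_{X_y}\,\psi = O(e^{-t})$ on compacta of $X^\circ$; this, not the $\dot\varphi$-bound, is the heart of the collapsing estimate in \cite{ST1, ST2}. Separately, be careful that ``converges as a current'' in the statement is a global weak statement on $X$; your elliptic argument on relatively compact $U\subset X_{\textnormal{can}}^\circ$ gives local regularity of the limit, and one must supplement it with the global potential bound of Lemma \ref{c0} plus uniqueness of the static solution to conclude the full weak convergence rather than only subsequential convergence on the regular set.
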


A stronger convergence and fibrewise control are proved in \cite{TWY} in the following lemma.

\begin{lemma} \label{s2fib} Suppose $0\leq \kappa=\kod(X)<n$.  Then $g(t)$ converges to $\Phi^* g_{\textnormal{can}}$ in $C^0$-topology on $X^\circ$ as $t\rightarrow \infty$. Furhtermore,  for any $\cK \subset\subset X_{\textnormal{can}}^\circ$, there exists $C>0$ such that for any $x\in \cK$ and $t\geq 0$, 
$$C^{-1}  e^{- t} \omega|_{\Phi^{-1}(x)} \leq \omega(t)|_{\Phi^{-1}(x)} \leq C e^{- t}  \omega_0 |_{\Phi^{-1}(x)}.$$

\end{lemma}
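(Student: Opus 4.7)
The plan is to work locally over a compact set $\cK \subset\subset X_{\textnormal{can}}^\circ$ where $\Phi$ is a smooth proper submersion with Calabi--Yau fibres, exploiting the crucial fact that $\chi = m^{-1}\Phi^*\omega_{FS}$ restricts to zero on each smooth fibre. Writing $\omega(t) = \omega_t + \ddbar\varphi$ with $\omega_t = \chi + e^{-t}(\omega_0-\chi)$, the restriction to a regular fibre $X_y = \Phi^{-1}(y)$ becomes
$$\omega(t)|_{X_y} \; = \; e^{-t}\,\omega_0|_{X_y} \; + \; \ddbar_y(\varphi|_{X_y}),$$
so the claimed fibrewise estimate is equivalent to the rescaled form $e^{t}\omega(t)|_{X_y}$ being uniformly equivalent to the fixed Kähler metric $\omega_0|_{X_y}$.

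Next I would introduce the natural reference form: by Yau's theorem applied fibrewise, for each $y\in X_{\textnormal{can}}^\circ$ there is a unique Ricci--flat Kähler metric in the class $[\omega_0|_{X_y}]$, and these glue to a smooth semipositive $(1,1)$-form $\omega_{SRF}$ on $X^\circ$ that is positive in fibre directions and has bounded bisectional curvature on $\Phi^{-1}(\cK)$. The fibrewise upper bound is then obtained by a parabolic Schwarz--lemma argument: applying the maximum principle to
$$u \; := \; \log\bigl(e^{t}\tr_{\omega(t)}\omega_{SRF}\bigr) - A\varphi$$
on $\Phi^{-1}(\cK)$ with $A$ sufficiently large, and using Lemma \ref{c0} to control $\varphi$, $\dot\varphi$ and $\tr_{\omega(t)}\chi$, yields a differential inequality of Aubin--Yau type forcing $u \leq C$. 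This gives $\tr_{\omega(t)}\omega_{SRF} \leq C e^{-t}$, equivalently $\omega(t)|_{X_y}\leq C e^{-t}\omega_0|_{X_y}$. The matching lower bound follows from the Monge--Amp\`ere equation $\omega(t)^n = e^{-(n-\kappa)t}e^{\dot\varphi+\varphi}\Omega$: the upper bound controls $n-\kappa$ eigenvalues of $\omega(t)$ in the fibre direction at rate $e^{-t}$, while $\tr_{\omega(t)}\chi \leq C$ controls the remaining $\kappa$ eigenvalues in the horizontal direction, and comparing with the prescribed total volume density forces the matching fibrewise lower bound.

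For the $C^0$ convergence on $X^\circ$: the fibrewise upper bound implies that $\varphi(t)$ oscillates by at most $C e^{-t}$ along each fibre of $\Phi|_{\Phi^{-1}(\cK)}$, so $\varphi(t) - \Phi^*\psi(t) \to 0$ in $C^0_{\textnormal{loc}}(X^\circ)$ where $\psi(t)$ denotes the fibre average of $\varphi(t)$. The preceding lemma identifies the limit of $\psi(t)$ with the pullback of the potential of $g_{\textnormal{can}}$, and applying $\ddbar$ together with the bound on $\tr_\omega\chi$ from Lemma \ref{c0} upgrades this to $C^0$ convergence $\omega(t) \to \Phi^* g_{\textnormal{can}}$ on compact subsets of $X^\circ$.

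The main obstacle is the parabolic Schwarz--lemma step: the reference form $\omega_t$ degenerates to $\chi$ in the fibre directions at rate $e^{-t}$, so the error terms arising when differentiating $u$ (coming both from the decaying $\omega_0-\chi$ contribution to $\omega_t$ and from horizontal derivatives of $\omega_{SRF}$) carry time--dependent coefficients that must cancel at precisely the right rate. This is the role played by the $e^t$ factor inside the logarithm and by the linear correction $-A\varphi$, and getting the signs and powers to match up cleanly on a genuinely collapsing background is the subtle point of the argument.
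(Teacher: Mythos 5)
The paper offers no proof of Lemma \ref{s2fib}; it simply cites \cite{TWY}, so there is no internal argument to compare against. Your sketch has the right overall flavour (localize over $\cK$, use the fact that $\chi$ dies on fibres, invoke the fibrewise Ricci--flat semi-flat form $\omega_{SRF}$, then run a parabolic Schwarz lemma and feed the answer back into the Monge--Amp\`ere equation), which is indeed the standard toolbox of \cite{TWY, ST1, ST2}.

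However, the central step is carrying the wrong sign. You claim that bounding $u=\log\bigl(e^t\tr_{\omega(t)}\omega_{SRF}\bigr)-A\varphi$ from above yields $\tr_{\omega(t)}\omega_{SRF}\leq Ce^{-t}$ and that this is \emph{equivalent} to $\omega(t)|_{X_y}\leq Ce^{-t}\omega_0|_{X_y}$. It is not: since $\omega_{SRF}\geq 0$, the inequality $\tr_{\omega(t)}\omega_{SRF}\leq Ce^{-t}$ is the statement $\omega_{SRF}\leq Ce^{-t}\omega(t)$, i.e. $\omega(t)\geq C^{-1}e^{t}\,\omega_{SRF}$ in fibre directions --- a \emph{diverging lower} bound, the opposite of the collapsing upper bound you want, and in fact false (along the flow one has $\tr_{\omega(t)}\omega_{SRF}\sim e^{t}$, not $e^{-t}$). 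The quantities $\tr_{\omega(t)}\omega_{SRF}$ and $\tr_{\omega_{SRF}}\omega(t)$ have been conflated. Moreover the logical order is inverted: in the actual argument one first proves a \emph{lower} bound $\omega(t)\geq c\,\hat\omega_t$ with $\hat\omega_t=\chi+e^{-t}\omega_{SRF}$ by a parabolic Schwarz estimate on $\tr_{\omega(t)}\hat\omega_t$ (here $\hat\omega_t$ is genuinely K\"ahler on $\Phi^{-1}(\cK)$, which is needed for the Aubin--Yau computation to close; the degenerate $\omega_{SRF}$ alone will not do), and then, using the Monge--Amp\`ere equation together with $|\varphi|+|\dot\varphi|\leq C$ to bound $\omega(t)^n/\hat\omega_t^n$ from above, the elementary linear-algebra implication ``all eigenvalues $\geq c$ and their product $\leq C$ forces all eigenvalues $\leq C/c^{n-1}$'' delivers the \emph{upper} bound $\omega(t)\leq C'\hat\omega_t$. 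Restriction to $X_y$ (where $\chi|_{X_y}=0$) then gives the two-sided estimate $C^{-1}e^{-t}\omega_0|_{X_y}\leq\omega(t)|_{X_y}\leq Ce^{-t}\omega_0|_{X_y}$. Your sketch of the eigenvalue-counting step also leaves the upper bound on the $\kappa$ horizontal eigenvalues unexplained ($\tr_{\omega(t)}\chi\leq C$ only gives $\omega(t)\geq C^{-1}\chi$, a lower bound). Once these directions are corrected and the reference form is taken to be $\hat\omega_t$ rather than $\omega_{SRF}$, the rest of your outline (oscillation estimate along fibres and identification of the fibre average with the potential of $g_{\textnormal{can}}$) is a reasonable path to the $C^0$ convergence, but as written the Schwarz-lemma step proves the opposite of what you want.
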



\section{The relative volume comparison for the Ricci flow}

In this section, we will use the recent work in \cite{Ba} to prove a relative volume comparison for the Ricci flow under suitable assumption on the scalar curvature bound. We will follow the notations from \cite{Ba} for the following quantities on a Riemannian manifold or along the Ricci flow.

\begin{definition} \label{s3nash} Let $(M, g)$ be a closed Riemannian manifold of dimension $n\geq 2$. The Nash entropy is defined by
$$ \cN(g, f, \tau) = (4\pi \tau)^{-n/2} \int_M  f e^{-f} dV_g$$
for $f\in C^\infty(M)$ and $\tau>0$. 

\end{definition}

Let $(M, g(t))$ be a Ricci flow, i.e., $g(t)$ satisfies the unnormalized Ricci flow equation 
\begin{equation}
\ddt{g} = - 2 \ric, 
\end{equation}
on an $n$-dimensional compact Riemannian manifold for $t\in I$, where $I$ is an open or closed interval. The heat operator is defined by
$$\Box= \ddt{} - \Delta$$
and the conjugate heat operator is defined by
$$\Box^* = - \ddt{} - \Delta + \sR,$$
where $\Delta$ is the Laplace operator associated to $g(t)$ and $\sR$ is the scalar curvature of $g(t)$.  For fixed $(y, s) \in M\times I$, the heat kernel $K(\cdot, \cdot; y, s)$ based at $(y, s)$ is given by
$$\Box K(\cdot, \cdot; y, s) =0, ~~\lim_{t\rightarrow s^+} K(\cdot, t; y, s) = \delta_y,$$
where $\delta_y$ is the Dirac measure at $y$. By duality, for fixed $(x, t)\in M\times I$, the function $K(x, t; \cdot, \cdot)$ is the conjugate heat kernel  at the base point $(x, t)$ satisfying
$$\Box^* K(x, t; \cdot, \cdot) =0, ~~\lim_{s\rightarrow t^-} K(x, t; \cdot, s) = \delta_x.$$

\begin{definition} Let $(M, g(t))$ be a solution of the Ricci flow for $t\in I$ and 
$$d\nu_{x_0, t_0}= (4\pi \tau)^{-n/2} e^{-f} dV_g = K(x_0, t_0; \cdot, \cdot) dV_g $$ be a pointed conjugate heat kernel measure at the base point $(x_0, t_0)$ with $\tau = t_0 -t$. Then  the pointed Nash entropy at $(x_0, t_0) \in M\times I$ is defined by
\begin{equation}
\cN_{x_0, t_0}(\tau) = \cN( g(t_0- \tau), f(t_0 - \tau), \tau)
\end{equation}
with $\cN_{x_0, t_0}(0)=0$ and for $s< t_0$, the space-time function $\cN^*_s$ is defined by
$$\cN^*_s(x_0, t_0) = \cN_{x_0, t_0} (t_0 - s). $$

\end{definition}

\begin{definition} Let $\mu_1, \mu_2$ be two probability measures on a compact Riemannian manifold $(X, g)$. The Wasserstein $W_1$-distance between $\mu_1$ and $\mu_2$ are defined by
$$d_{W_1}^g(\mu_1, \mu_2) = \sup_{f\in C^\infty(M),   |\nabla f| \leq 1} \left( \int_X f d\mu_1 - \int_X f d\mu_2\right).$$

\end{definition}

\begin{lemma} \label{s3mono} For fixed base point $(x_0, t_0)$, the pointed Nash entropy $\cN_{x_0, t_0}(\tau)$ is non-increasing in $\tau$.

\end{lemma}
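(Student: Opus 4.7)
The plan is to reduce the monotonicity of $\cN_{x_0,t_0}(\tau)$ to Perelman's monotonicity of the $\mathcal{W}$-entropy via a concavity argument. First I would write the conjugate heat kernel as $K(x_0,t_0;\cdot,\cdot)=(4\pi\tau)^{-n/2}e^{-f}$ with $\tau=t_0-t$, and set $d\nu_t=K\,dV_{g(t)}$. From $\Box^* K=0$ one extracts the standard evolution of the potential,
\begin{equation*}
\partial_\tau f=\Delta f-|\nabla f|^2+\sR-\tfrac{n}{2\tau}.
\end{equation*}
The basic duality $\tfrac{d}{dt}\int\varphi\,d\nu_t=\int\Box\varphi\,d\nu_t$ then holds for any smooth space-time function $\varphi$ as a direct consequence of $\Box^* K=0$ and integration by parts.

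Next I would differentiate $\tau\cN_{x_0,t_0}(\tau)$ in $\tau$. Applying the duality to $\varphi=(t_0-t)f$ and using $\int\Delta f\,d\nu=\int|\nabla f|^2\,d\nu$ (a consequence of $\nabla K=-K\nabla f$), a short calculation yields
\begin{equation*}
\frac{d}{d\tau}\bigl(\tau\cN_{x_0,t_0}(\tau)\bigr)=\mathcal{W}\bigl(g(t_0-\tau),\,f(\cdot,t_0-\tau),\,\tau\bigr)+\tfrac{n}{2},
\end{equation*}
where $\mathcal{W}=\int\bigl[\tau(|\nabla f|^2+\sR)+f-n\bigr]\,d\nu$ is Perelman's $\mathcal{W}$-functional evaluated along the conjugate heat kernel.

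I would then invoke Perelman's classical monotonicity: along the coupling of the Ricci flow with the conjugate heat kernel, $\mathcal{W}(\tau)$ is non-increasing in $\tau$, as a consequence of the pointwise identity
\begin{equation*}
\Box^*\!\Bigl(\bigl[\tau(2\Delta f-|\nabla f|^2+\sR)+f-n\bigr]K\Bigr)=-2\tau K\,\bigl|\ric+\nabla^2 f-\tfrac{g}{2\tau}\bigr|^2.
\end{equation*}
It follows that $\tau\mapsto\tau\cN_{x_0,t_0}(\tau)$ has a non-increasing derivative, i.e.\ is a concave function of $\tau$. Combined with the boundary behavior $\tau\cN_{x_0,t_0}(\tau)\to 0$ as $\tau\to 0^+$, which comes from the Gaussian asymptotic $f(\cdot,t_0-\tau)\sim d_{g(t_0)}(\cdot,x_0)^2/(4\tau)$ together with the weak convergence $d\nu_t\rightharpoonup\delta_{x_0}$ as $t\to t_0^-$, concavity forces the ratio $\cN_{x_0,t_0}(\tau)=(\tau\cN_{x_0,t_0}(\tau))/\tau$ to be non-increasing; this uses only the elementary fact that for any concave $F$ with $F(0)=0$ one has $F'(\tau)\leq F(\tau)/\tau$, hence $(F/\tau)'\leq 0$.

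The main technical point is verifying $\tau\cN_{x_0,t_0}(\tau)\to 0$ as $\tau\to 0^+$, which requires the sharp heat-kernel asymptotics at the base point, but these are standard from Perelman's theory and from the Bamler framework cited earlier in the paper.
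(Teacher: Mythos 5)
Your argument is correct and is exactly the canonical Perelman/Bamler derivation: the identity $\frac{d}{d\tau}\bigl(\tau\cN_{x_0,t_0}(\tau)\bigr)=\mathcal{W}(\tau)+\tfrac{n}{2}$, Perelman's monotonicity of $\mathcal{W}$, and the elementary concavity fact $F'(\tau)\le F(\tau)/\tau$ for a concave $F$ with $F(0)=0$. The paper itself gives no proof (Lemma \ref{s3mono} is quoted from \cite{Ba}), and your route is the same one used there; the only thing worth flagging is that the paper's Definition \ref{s3nash} apparently drops the $-\tfrac{n}{2}$ that appears in \cite{Ba} (otherwise $\cN_{x_0,t_0}(0)=0$ would be inconsistent with the Gaussian asymptotics), but your argument correctly sidesteps this by only using $\tau\cN_{x_0,t_0}(\tau)\to 0$, which holds under either normalization.
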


Equivalently, $\cN^*_{s}(x_0, t_0)$ is non-decreasing in $s$. The following estimates are proved in \cite{Ba} (Theorem 5.9 and Corollary 5.11).
\begin{lemma} \label{nashgrad} If $\sR(\cdot, s) \geq \sR_{\textnormal{min}}$ for some $s\in I$ and $\sR_{\textnormal{min}}\in \mathbb{R}$, then on $M \times \left(I \cap (s, \infty)\right)$, 
$$|\nabla \cN^*_s| \leq \left( \frac{n}{2(t-s)} + | \sR_{\textnormal{min}} |\right)^{\frac{1}{2}}.$$
Furthermore, if $s< t^* \leq \min\{t_1, t_2\}$ and $s, t_1, t_2\in I$, then for any $x_1, x_2\in M$, 
$$\cN^*_s(x_1, t_1)  - \cN^*_s(x_2, t_2)\leq\left( \frac{n}{2(t^* -s)} + |\sR_{\textnormal{min}} |\right)^{\frac{1}{2}} d_{W_1}^{g(t^*)}( \nu_{x_1, t_1}(t^*), \nu_{x_2, t_2}(t^*)) + \frac{n}{2} \log \left(\frac{t_2-s}{t^*-s}\right).$$

\end{lemma}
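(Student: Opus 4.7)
The plan is to obtain both assertions from a single ingredient, a Hamilton-type gradient estimate on the pointed Nash entropy, and then to deduce the Wasserstein comparison by combining this estimate with a reproduction formula inherited from the semigroup structure of the conjugate heat kernel. First I would rewrite the pointed Nash entropy in the compact form
$$\cN^*_s(x_0,t_0) = -\int_M K(x_0,t_0;\cdot,s)\,\log K(x_0,t_0;\cdot,s)\,dV_{g(s)} - \frac{n}{2}\log(4\pi(t_0-s)) - \frac{n}{2},$$
so that $\cN^*_s$ is essentially the negative Boltzmann entropy of the conjugate heat kernel measure $\nu_{x_0,t_0}(\cdot,s)$.

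For the gradient bound I differentiate the above identity in the base point $x_0$. Since $\int K\,dV_{g(s)} = 1$ the derivative of the normalization vanishes, so $\nabla_{x_0}\cN^*_s = -\int (\log K)\,\nabla_{x_0} K\,dV_{g(s)}$, and Cauchy-Schwarz reduces the problem to controlling the Fisher information $I = \int |\nabla_{x_0}\log K|^2\,K\,dV_{g(s)}$. The heart of the matter is to bound $I$ by $\frac{n}{2(t_0-s)} + |\sR_{\min}|$; this is the content of Perelman's differential Harnack inequality for the conjugate heat kernel, refined so that only a lower bound on scalar curvature enters (rather than the full Ricci lower bound needed in classical Li–Yau–Hamilton estimates). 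Combining this Harnack estimate with the monotonicity of $\cN^*_s$ in $t_0$ from Lemma \ref{s3mono} closes the computation and produces the stated Lipschitz constant.

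For the Wasserstein estimate I would exploit the semigroup identity $K(x_i,t_i;z,s) = \int K(x_i,t_i;y,t^*)\,K(y,t^*;z,s)\,dV_{g(t^*)}(y)$ to derive a reproduction formula of the shape $\cN^*_s(x_i,t_i) = \int \cN^*_s(y,t^*)\,d\nu_{x_i,t_i}(y,t^*) + R(t_i,t^*,s)$, where the remainder $R$ collects the entropy of the intermediate measure $\nu_{x_i,t_i}(\cdot,t^*)$ together with the $\log(4\pi\tau)$ correction. A direct calculation, using Jensen's inequality applied to the Boltzmann entropy of the convex combination of heat kernels, gives $R(t_i,t^*,s) \leq \frac{n}{2}\log\frac{t_i-s}{t^*-s}$. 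Taking the difference of the two reproduction formulas at $(x_1,t_1)$ and $(x_2,t_2)$, applying the Kantorovich-Rubinstein duality
$$\int g\,d(\nu_1 - \nu_2) \leq \textnormal{Lip}_{g(t^*)}(g)\cdot d_{W_1}^{g(t^*)}(\nu_1,\nu_2)$$
to $g = \cN^*_s(\cdot,t^*)$, and inserting the Lipschitz bound from the first part gives the claimed inequality with the error $\frac{n}{2}\log\frac{t_2-s}{t^*-s}$ accounting for the remainders.

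The main obstacle is the Fisher information bound under merely a scalar curvature lower bound. Traditional Perelman-style arguments lean on Ricci curvature control, which is unavailable along the K\"ahler-Ricci flow considered here; the required substitute is Bamler's parabolic frequency/barrier argument, which extends the Harnack inequality to the setting of only a scalar bound. Modulo this single analytic input, the remainder of the argument is a semigroup computation paired with Kantorovich duality and so is fairly mechanical.
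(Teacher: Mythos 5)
The paper does not actually prove this lemma: immediately above the statement it writes ``The following estimates are proved in \cite{Ba} (Theorem 5.9 and Corollary 5.11),'' and that is the extent of the argument. So the relevant comparison is against Bamler's proof of those two results, not against anything internal to the paper.

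Your outline lands in the right neighborhood — rewriting $\cN^*_s$ as (essentially) the negative Boltzmann entropy of the conjugate heat kernel measure, invoking an averaged Harnack bound that uses only a scalar curvature lower bound, and deducing the Wasserstein comparison via Kantorovich--Rubinstein duality are all ingredients that genuinely appear in \cite{Ba}. However, two steps as you have written them do not close.

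First, the gradient bound. From $\nabla_{x_0}\cN^*_s = -\int (\log K)\,\nabla_{x_0} K\,dV_{g(s)} = -\int (\log K)(\nabla_{x_0}\log K)\,d\nu$, Cauchy--Schwarz does \emph{not} reduce the problem to controlling the Fisher information $I = \int|\nabla_{x_0}\log K|^2\,d\nu$ alone; it produces the additional factor $\bigl(\int(\log K - c)^2\,d\nu\bigr)^{1/2}$, i.e.\ the variance of $\log K$ (or equivalently of the potential $f$) with respect to the conjugate heat kernel measure. That variance estimate is a separate and nontrivial ingredient in Bamler's argument (his Prop.~4.2 / the $\mathcal{W}$-inequality, combined with a bound on $\operatorname{Var}_\nu(f)$), and without it the Cauchy--Schwarz step does not deliver the stated Lipschitz constant.

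Second, and more decisively, the ``Furthermore'' inequality. Note that the error term $\frac{n}{2}\log\frac{t_2-s}{t^*-s}$ involves only $t_2$, not $t_1$. Your plan is to write $\cN^*_s(x_i,t_i) = \int\cN^*_s(\cdot,t^*)\,d\nu_{x_i,t_i}(t^*) + R_i$ with $R_i \le \frac{n}{2}\log\frac{t_i-s}{t^*-s}$ for $i=1,2$ and then subtract. This cannot produce the stated bound: an upper bound on $R_2$ is useless when you subtract it, and applying your bound on $R_1$ would at best give a $t_1$-dependent error. What is actually needed is an \emph{asymmetric} pair of inequalities — roughly, an averaging inequality $\cN^*_s(x_1,t_1)\le\int\cN^*_s(\cdot,t^*)\,d\nu_{x_1,t_1}(t^*)$ with \emph{no} logarithmic error (coming from a sub-mean-value-type property of $\cN^*_s$ for the heat operator), together with the Jensen/concavity-of-entropy lower bound $\cN^*_s(x_2,t_2)\ge\int\cN^*_s(\cdot,t^*)\,d\nu_{x_2,t_2}(t^*) - \frac{n}{2}\log\frac{t_2-s}{t^*-s}$. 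Only the second of these two is recovered by the Jensen/chain-rule device you describe; the first requires a different argument (in Bamler's scheme, this is where the evolution inequality for $\cN^*_s$ under the heat operator enters). As written, your ``same reproduction formula at both points'' treatment is not precise enough to account for the asymmetry, so the final inequality does not follow.
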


The pointed Nash entropy is roughly comparable to logarithmic  of the volume ratio of suitable scale and scalar curvature assumption. The following volume non-inflation estimate is also proved in \cite{Ba}.

\begin{lemma} \label{noncol} Let $(M, g(t))$ be a solution of the Ricci flow for $t\in [- r^2, 0]$. If    
$$\sR\geq -nr^{-2}    $$
on $M\times [-r^2, 0]$, then for any $A\geq 1$, there exists $C=C(n, A)>0$ such that
$$\vol_{g(0)}(x, Ar) \leq C r^n     e^{\cN^*_{- r^2}(x, 0)} .$$

\end{lemma}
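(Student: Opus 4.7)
\medskip

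The plan is to implement Bamler's heat kernel concentration strategy: under the scalar curvature lower bound, the conjugate heat kernel based at $(x, 0)$ satisfies a pointwise lower bound on a ``concentration region'' quantified by the Nash entropy, and combining this with the probability normalization $\int K\, dV = 1$ produces an upper bound on the volume of that region which then dominates $\vol_{g(0)}(B_{g(0)}(x, Ar))$.

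First I would set up the conjugate heat kernel at $(x, 0)$: let $u(\cdot, -r^2) = K(x, 0; \cdot, -r^2) = (4\pi r^2)^{-n/2} e^{-f}$, so that $\int u \, dV_{g(-r^2)} = 1$ and (with the standard normalization convention) $\cN^*_{-r^2}(x, 0) = \int f\, u\, dV_{g(-r^2)} - \frac{n}{2}$. The monotonicity of Nash entropy (Lemma~\ref{s3mono}) together with the gradient estimate (Lemma~\ref{nashgrad}) and the assumption $\sR \geq -n r^{-2}$ should yield, via Bamler's Gaussian concentration analysis of the conjugate heat kernel, the existence of a ``center of mass'' point $z \in M$ and constants $c, C$ depending only on $n$ such that
$$ u(y, -r^2) \geq c(n)\, r^{-n}\, e^{-\cN^*_{-r^2}(x, 0)} \quad \text{on } B_{g(-r^2)}(z, Cr), $$
with $\int_{B_{g(-r^2)}(z, Cr)} u\, dV_{g(-r^2)} \geq \tfrac{1}{2}$. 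Combined with the global normalization $\int u\, dV_{g(-r^2)} = 1$, this immediately gives the ``effective volume'' bound
$$ \vol_{g(-r^2)}\bigl(B_{g(-r^2)}(z, Cr)\bigr) \leq c(n)^{-1}\, r^n\, e^{\cN^*_{-r^2}(x, 0)}. $$

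Next I would transfer this bound to time $0$ for the ball $B_{g(0)}(x, Ar)$. The lower bound $\sR \geq -n r^{-2}$ and the identity $\partial_t dV_g = -\sR\, dV_g$ give the pointwise volume-form comparison $dV_{g(0)} \leq e^n\, dV_{g(-r^2)}$. Bamler's distance distortion estimate along the Ricci flow under the same scalar curvature bound, together with the proximity of $z$ to $x$ (which follows from Lemma~\ref{nashgrad} applied at scale $r$), shows that $B_{g(0)}(x, Ar) \subset B_{g(-r^2)}(z, C'r)$ for some $C' = C'(n, A)$. Rerunning the concentration lower bound with $C'$ in place of $C$, one obtains
$$ \vol_{g(0)}\bigl(B_{g(0)}(x, Ar)\bigr) \leq e^n\, \vol_{g(-r^2)}\bigl(B_{g(-r^2)}(z, C' r)\bigr) \leq C(n, A)\, r^n\, e^{\cN^*_{-r^2}(x, 0)}, $$
which is the desired inequality.

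The main obstacle is the heat kernel concentration input: establishing the pointwise lower bound $u \geq c\, r^{-n}\, e^{-\cN^*_{-r^2}(x, 0)}$ on a metric ball of radius $\sim r$ capturing a definite fraction of the total mass. This is the essential content of Bamler's analysis and requires an iterated use of the monotonicity of the Nash entropy combined with the gradient estimate of Lemma~\ref{nashgrad}, together with a careful identification of the center of mass $z$ so that the distance distortion along the Ricci flow can be invoked to compare the balls at times $-r^2$ and $0$ in the final step.
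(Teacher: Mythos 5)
The paper gives no proof of this lemma---it is cited directly from \cite{Ba}---so there is no in-paper argument to compare against. Your sketch reaches for the right raw materials (conjugate heat kernel normalization, Nash entropy monotonicity and gradient estimate, the volume-form evolution under a scalar curvature bound), but two of its steps fail under the stated hypothesis, which is \emph{only} a scalar curvature lower bound.

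First, the concentration step asserts a pointwise heat kernel lower bound $K(x,0;\cdot,-r^2)\geq c(n)\,r^{-n}\,e^{-\cN^*_{-r^2}(x, 0)}$ on an entire ball capturing half of the conjugate heat kernel mass. Under a scalar curvature lower bound alone this does not follow. What does follow is a bound at one point: writing $(4\pi r^2)^{-n/2}e^{-f} = K(x,0;\cdot,-r^2)$, one has $\cN^*_{-r^2}(x,0) = -\tfrac{n}{2}\log(4\pi r^2) - \tfrac{n}{2} - \int K\log K\,dV_{g(-r^2)}$, and the trivial bound $\int K\log K \le \log\sup K$ (since $\int K\,dV=1$) gives only $\sup_w K(x,0;w,-r^2)\ge c\,r^{-n}e^{-\cN^*_{-r^2}(x,0)}$. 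Promoting this to a lower bound on a whole metric ball is precisely where Bamler's heat kernel lower bound theorems enter, and those carry extra curvature hypotheses beyond what the present lemma assumes.

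Second, and decisively: the final transfer step asserts $B_{g(0)}(x, Ar) \subset B_{g(-r^2)}(z, C'r)$, justified by ``Bamler's distance distortion estimate under the same scalar curvature bound.'' No such estimate exists. Distance distortion along the Ricci flow---Perelman's lemma, or the $P^*$-comparison Lemma~\ref{s3para} quoted later in this paper---requires a Ricci curvature bound. Under a scalar curvature bound alone, metric balls at different time slices cannot be compared; this is exactly why Bamler replaces them with $P^*$-parabolic neighborhoods measured in the Wasserstein $W_1$-distance. A correct proof must therefore not move $B_{g(0)}(x, Ar)$ to time $-r^2$. One way to avoid this is to keep the ball at time $0$ and push only the heat kernel backward: for $y \in B := B_{g(0)}(x, Ar)$ insert $1 = \int_M K(y, 0; w, -r^2)\, dV_{g(-r^2)}(w)$, apply Fubini to obtain $\vol_{g(0)}(B) = \int_M h(w)\,dV_{g(-r^2)}(w)$ with $h(w) = \int_B K(y,0;w,-r^2)\,dV_{g(0)}(y)$, bound $h \leq e^n$ pointwise from $\frac{d}{dt}\int_M K(\cdot, t; w, -r^2)\, dV_{g(t)} = -\int K \sR\,dV \leq n r^{-2}\int K\,dV$ and the hypothesis $\sR\ge -nr^{-2}$, and then confine the essential support of $h$ using the Wasserstein monotonicity $d_{W_1}^{g(-r^2)}(\nu_{y,0}(-r^2), \nu_{x,0}(-r^2)) \le d_{g(0)}(y,x) \le Ar$, the heat kernel \emph{upper} bound (valid under the scalar curvature lower bound), and the Nash entropy gradient estimate of Lemma~\ref{nashgrad}. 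This reorganization keeps all metric-ball information at a single time slice, which is the essential structural difference from your outline.
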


The following volume non-collapsing estimate is proved in \cite{Ba} as a generalization of Perelman's $\kappa$-non-collapsing theorem.

\begin{lemma} \label{noninfl} Let $(M, g(t))$ be a solution of the Ricci flow for $t\in [-r^2,  0]$. If 
$$\sR \leq r^{-2}, ~ on ~ B_{g(0)}(x, r)\times [-r^2, 0], $$
then there exists $c=c(n)>0$ such that 
$$\vol_{g(0)} (B_{g(0)}(x, r)) \geq c r^n e^{\cN^*_{-r^2}(x, 0)}. $$

\end{lemma}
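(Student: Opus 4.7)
The plan is to adapt Perelman's no-local-collapsing argument to the pointed Nash entropy framework introduced in \cite{Ba}. The goal is to bound $\cN^*_{-r^2}(x, 0)$ from above by $\log V - n \log r + C(n)$, where $V := \vol_{g(0)}(B_{g(0)}(x, r))$; exponentiating this inequality then yields $V \geq c(n) r^n e^{\cN^*_{-r^2}(x, 0)}$.

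First I would construct a standard cutoff $\phi: M \to [0, 1]$ with $\phi \equiv 1$ on $B_{g(0)}(x, r/2)$, $\supp \phi \subset B_{g(0)}(x, r)$, and $|\nabla \phi|_{g(0)}^2 \leq 100/r^2$. Setting $Z := \int \phi^2 \, dV_{g(0)} \leq V$, the density $\phi^2/Z$ is a probability measure supported in $B_{g(0)}(x, r)$. Writing $\phi^2/Z = (4\pi r^2)^{-n/2} e^{-\tilde f_0}$ and invoking Jensen's inequality against the probability measure $(\phi^2/Z)\,dV_{g(0)}$, one gets $\int \tilde f_0 \,(\phi^2/Z) \, dV_{g(0)} \leq \log V - \tfrac{n}{2}\log(4\pi r^2)$. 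Combining this with the bound $|\nabla \tilde f_0|^2 = 4|\nabla \phi|^2/\phi^2 \leq C(n)/r^2$ on $\supp \phi$ and the hypothesis $\sR \leq r^{-2}$ there, a direct computation yields the initial-time estimate
\begin{equation*}
\mathcal{W}\bigl(g(0), \tilde f_0, r^2\bigr) \leq C(n) + \log V - n \log r.
\end{equation*}
Evolving $\tilde f$ backward by the conjugate heat equation to $t = -r^2$ and using Perelman's $\mathcal{W}$-monotonicity along the backward conjugate heat flow transfers the same upper bound to $\mathcal{W}(g(-r^2), \tilde f(-r^2), r^2)$, and hence to $\mu(g(-r^2), r^2)$ by the infimum definition.

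The decisive remaining step is to show $\cN^*_{-r^2}(x, 0) \leq \mu(g(-r^2), r^2) + C(n)$; chaining this with the preceding estimates delivers the theorem. This is where the main obstacle lies. The heat-kernel identity $\partial_\tau(\tau \cN^*) = \mathcal{W}$ along $\nu_{x, 0}$, combined with the monotonicity of $\mathcal{W}$ along the same kernel, naively gives the opposite inequality $\cN^* \geq \mathcal{W}$, so one must bridge the gap by appealing to Bamler's refined heat kernel concentration estimates -- in particular the existence of an $H_n$-center for $\nu_{x,0}(-r^2)$ together with pointwise Gaussian-type bounds on the kernel at scale $r$. Extracting precisely the factor $r^n$ (rather than some larger scale on which the scalar curvature control might fail) is the delicate analytic point, and is exactly where the hypothesis $\sR \leq r^{-2}$ on $B_{g(0)}(x, r) \times [-r^2, 0]$ is indispensable.
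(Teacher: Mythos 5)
The paper gives no proof of this lemma; it is quoted directly from \cite{Ba}. Reviewing your proposal on its own terms, steps 1--4 are a faithful replay of Perelman's cutoff construction and do produce an upper bound $\mu(g(-r^2),\tau)\le C(n)+\log V-n\log r$ at a scale $\tau\approx 2r^2$ (note that evolving the conjugate heat flow backward from $\tau=r^2$ at $t=0$ to $t=-r^2$ pushes the scale up to $2r^2$, not $r^2$). The gap you flag in step~5, however, is fatal rather than a technical lacuna. The inequality you need, $\cN^*_{-r^2}(x,0)\le\mu(g(-r^2),\tau)+C(n)$, is simply false: as you observe, $\partial_\tau(\tau\cN^*)=\mathcal{W}$ together with $\mathcal{W}$-monotonicity gives $\cN^*_{-r^2}(x,0)\ge\mathcal{W}[g_{-r^2},\cdot,r^2]\ge\mu(g(-r^2),r^2)$, and the difference $\cN^*-\mu$ is unbounded in general. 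For a concrete obstruction, take a surface Ricci flow with a long thin collapsing neck far from the base point $x$: the log-Sobolev constant $\mu$ sees the collapsed region globally and can be made arbitrarily negative, while $\cN^*_{-r^2}(x,0)$ stays close to $0$ because the conjugate heat kernel based at $(x,0)$ never reaches the neck at scale $r$. Thus the detour through $\mu$ can at best yield Perelman's classical conclusion $V\ge c\,r^n e^{\mu}$, which is strictly weaker than $V\ge c\,r^n e^{\cN^*}$ precisely because $e^{\cN^*}\ge e^{\mu}$; the Bamler version is a sharpening that cannot be reached by reduction to the classical one.

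Bamler's argument bypasses $\mu$ altogether. It hinges on the pointwise upper bound for the conjugate heat kernel $K(x,0;\cdot,-r^2)\lesssim r^{-n}e^{-\cN^*_{-r^2}(x,0)}$ established in \cite{Ba} (this is where the local scalar curvature bound enters), combined with the Gaussian concentration of $\nu_{x,0}(-r^2)$ about an $H_n$-center at scale comparable to $r$, and a distance-distortion step to carry the resulting volume lower bound at time $-r^2$ over to the ball $B_{g(0)}(x,r)$. What you describe as ``bridging the gap with Bamler's heat kernel concentration estimates'' is, in effect, \emph{the} proof; there is no $\mathcal{W}$-functional detour to repair, and steps 1--4 do not feed into it.
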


The following is the main result of this section by comparing volume of balls with same radius at the same time slice. 

\begin{proposition} \label{volcomp1}  Let $(M, g(t))$ be a solution to the Ricci flow on a compact $n$-dimensional Riemannian manifold $M$ for $t\in [-r^2, 0]$,  for some $r>0$. If 
$$|\sR| \leq  r^{-2} , ~on ~  M \times [-r^2, 0],$$
then for any $x_1, x_2\in M$ with
$$d_{g(0)}(x_1, x_2) \leq A,$$
we have for some $c(n)>0$
$$ \vol_{g(0)}(B_{g(0)}(x_1, r))  \geq c(n)e^{ - \sqrt{n} r^{-1} A}  ~ \vol_{g(0)}(B_{g(0)}(x_2, r))  .$$

\end{proposition}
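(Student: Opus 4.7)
The plan is to reduce both volumes to the pointed Nash entropy $\cN^*_{-r^2}(x_i,0)$ via the twin estimates of Lemmas~\ref{noncol} and~\ref{noninfl}, and then to compare the two Nash entropies using the gradient inequality of Lemma~\ref{nashgrad}.

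First I would apply Lemma~\ref{noninfl} at $x_1$ at scale $r$: the hypothesis $\sR \leq r^{-2}$ is supplied by $|\sR| \leq r^{-2}$, so
\begin{equation*}
\vol_{g(0)}(B_{g(0)}(x_1, r)) \geq c(n)\, r^n\, e^{\cN^*_{-r^2}(x_1, 0)}.
\end{equation*}
Next I would apply Lemma~\ref{noncol} at $x_2$ with $A = 1$: the hypothesis $\sR \geq -n r^{-2}$ is again supplied by $|\sR| \leq r^{-2}$, giving
\begin{equation*}
\vol_{g(0)}(B_{g(0)}(x_2, r)) \leq C(n)\, r^n\, e^{\cN^*_{-r^2}(x_2, 0)}.
\end{equation*}

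To compare the exponents I would invoke the second estimate of Lemma~\ref{nashgrad} with $s = -r^2$, $t_1 = t_2 = 0$ and the optimal choice $t^* = 0$. With this choice the logarithmic term $\tfrac{n}{2}\log\tfrac{t_2-s}{t^*-s}$ vanishes, the conjugate heat kernel measures $\nu_{x_i, 0}(0) = \delta_{x_i}$ satisfy $d_{W_1}^{g(0)}(\delta_{x_1}, \delta_{x_2}) = d_{g(0)}(x_1, x_2) \leq A$, and the coefficient reduces to $\bigl(\tfrac{n}{2 r^2} + r^{-2}\bigr)^{1/2} \leq \sqrt{n}\, r^{-1}$ since $n \geq 2$. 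Hence
\begin{equation*}
\cN^*_{-r^2}(x_2, 0) - \cN^*_{-r^2}(x_1, 0) \leq \sqrt{n}\, r^{-1} A.
\end{equation*}
Substituting this into the two displays above yields the claimed inequality with $c(n)$ replaced by $c(n)/C(n)$.

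There is no substantive obstacle, as Bamler's machinery supplies all the sharp tools. The only tactical choices are (i) taking $t^* = t_1 = t_2 = 0$ to kill the logarithmic correction in Lemma~\ref{nashgrad} -- legal because its hypothesis $s < t^* \leq \min\{t_1, t_2\}$ is strict only on the left -- and (ii) observing that the conjugate heat kernel measure based at $(x, 0)$ at its own base time is the Dirac mass $\delta_x$, so that the Wasserstein distance between the two measures is exactly $d_{g(0)}(x_1, x_2)$. These reductions collapse the general upper bound of Lemma~\ref{nashgrad} into the clean exponent $\sqrt{n}\, r^{-1} A$ that appears in the statement.
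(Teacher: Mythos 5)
Your proposal is correct and follows essentially the same route as the paper: both reduce the volume ratio to the difference of Nash entropies via Lemmas~\ref{noncol} and~\ref{noninfl}, and then bound $\cN^*_{-r^2}(x_2,0)-\cN^*_{-r^2}(x_1,0)$ by $\sqrt{n}\,r^{-1}A$. The only cosmetic difference is that you invoke the two-point Wasserstein inequality in Lemma~\ref{nashgrad} specialized to $t^*=t_1=t_2=0$, whereas the paper uses the gradient bound $|\nabla\cN^*_{-r^2}(\cdot,0)|\leq\sqrt{n}\,r^{-1}$ integrated along a minimizing geodesic — the two specializations give the identical estimate.
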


\begin{proof} 

By Lemma \ref{noncol} and Lemma \ref{noninfl}, we have 
$$\vol_{g (0)}(B_{ g(0)} (x_i, r) )   \geq c r^{n} e^{\cN^*_{- r^2}(x_i, 0) }$$
and
$$\vol_{g(0)}(B_{ g(0)} (x_i, r))    \leq C r^{n} e^{\cN^*_{-r^2}(x_i, 0) }.$$
By the gradient estimate for the pointed Nash entropy in Lemma \ref{nashgrad}, we have 
\begin{eqnarray*}
&&\frac{ \vol_{g (0)}(B_{ g(0)} (x_2, r) )}{ \vol_{g (0)}(B_{ g(0)} (x_1, r) )}\\
&\leq&  C' \exp \left( \cN^*_{- r^2}(x_2, 0)- \cN^*_{- r^2}(x_1, 0)\right) \\
&\leq& C' \exp \left( \sup_M |\nabla \cN^*_{- r^2}(\cdot, 0)| d_{g(0)} (x_1, x_2)\right)\\
&\leq& C' \exp \left( \sqrt{n} ~\frac{  d_{g(0)} (x_1, x_2) }{r} \right)
\end{eqnarray*}
for some uniform $C'=C'(n)>0$.

\end{proof}

We will also take the opportunity to present a different proof of the relative volume comparison of Tian-Zhang \cite{TiZ2} with an additional lower bound for the scalar curvature using the work of \cite{Ba}.

The following parabolic region is introduced in \cite{Ba}.

\begin{definition} Suppose $(x_0, t_0)\in M\times I$, $r$, $T^-$, $T^+ \geq 0$ and $t_0 - T^- \in I$. The $P^*$-parabolic neighborhood $P^*(x_0, t_0; r, T^-, T^+)\subset M\times I$ is defined in 
 by the set of $(x, t) \in M\times I$ satisfying
$$ d_{W_1}^{g_{t_0-T^-}} \left(\nu_{x_0, t_0}(\cdot, t_0- T^{-}), \nu_{x, t}(\cdot, t_0-T^{-})\right)<r, ~~ t_0 - T^-\leq t \leq t_0+T^+.$$

\end{definition}

It is natural to compare the $P^*$-parabolic neighborhood $P^*(x_0, t_0; r, T^-, T^+)$ to the standard parabolic neighborhood
$$P(x_0, t_0; r, T^-, T^+) = B_{g(t_0)}(x_0, r) \times [t_0- T^-, t_0+T^+].$$
It is shown in \cite{Ba} that along the Ricci flow, we have
$$d_{W_1}^{g(t)}(\nu_{x_1, t_0}(\cdot, t), \nu_{x_2, t_0}(\cdot, t)) \leq d_{g(t_0)}(x_1, x_2)$$ 
and the following comparison is proved in \cite{Ba} (Corollary 9.6).

\begin{lemma} \label{s3para} Let $(M, g(t)_{t\in I})$ be a Ricci flow on an $n$-dimensional compact Riemannian manifold. For any $0< \alpha \leq \mathcal{A} < \infty$, $K, \beta^-, \beta^+\geq 0$, there exists $\underline{\mathcal{A}} = \underline{\mathcal{A}}(\alpha,\mathcal{A}, K, \beta^-, \beta^+)\geq 0$ such for any $\mathcal{B} \geq \underline{\mathcal{A}}$, if 
$$|\ric| \leq K r^{-2}, ~in ~ P(x_0, t_0; \mathcal{A}r, -\beta^- r^2, \beta^+ r^2),$$
then
$$P(x_0, t_0; \mathcal{A} r, - \beta^-r^2, \beta^+ r^2) \subset P^*(x_0, t_0; \mathcal{B}r, -\beta^- r^2, \beta^+ r^2).$$

\end{lemma}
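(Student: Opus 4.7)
The plan is to show, for every $(x,t) \in P(x_0, t_0; \mathcal{A}r, -\beta^- r^2, \beta^+ r^2)$, a uniform bound of the form
\begin{equation*}
d_{W_1}^{g(t_0-\beta^- r^2)}\bigl(\nu_{x_0,t_0}(\cdot, t_0-\beta^- r^2),\, \nu_{x,t}(\cdot, t_0-\beta^- r^2)\bigr) \leq \underline{\mathcal{A}}(\alpha, \mathcal{A}, K, \beta^-, \beta^+) \cdot r,
\end{equation*}
from which the claimed inclusion follows by taking $\mathcal{B} \geq \underline{\mathcal{A}}$. I would split this $W_1$-distance, via the triangle inequality, into a \emph{spatial} piece comparing $\nu_{x_0,t}$ with $\nu_{x,t}$ and a \emph{temporal} piece comparing $\nu_{x_0,t_0}$ with $\nu_{x_0,t}$, both evaluated at the time slice $s = t_0 - \beta^- r^2$.

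For the spatial piece, the $W_1$-contraction identity quoted before Lemma \ref{s3para} immediately yields
\begin{equation*}
d_{W_1}^{g(s)}\bigl(\nu_{x_0,t}(\cdot,s),\, \nu_{x,t}(\cdot,s)\bigr) \leq d_{g(t)}(x_0,x).
\end{equation*}
The Ricci bound $|\ric| \leq K r^{-2}$ inside $P$, combined with the standard distance distortion estimate $\bigl|\tfrac{d}{dt} d_{g(t)}\bigr| \leq C K r^{-2} d_{g(t)}$ along the Ricci flow, gives $d_{g(t)}(x_0,x) \leq e^{C K(\beta^+ + \beta^-)} d_{g(t_0)}(x_0,x) \leq e^{C K(\beta^+ + \beta^-)} \mathcal{A}\, r$, which controls the spatial contribution by a multiple of $\mathcal{A} r$.

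For the temporal piece I plan to use the semigroup representation of the conjugate heat kernel. When $t \leq t_0$, one has the disintegration $\nu_{x_0,t_0}(\cdot,s) = \int \nu_{z,t}(\cdot,s)\, d\nu_{x_0,t_0}(z,t)$, so convexity of $W_1$ combined with the same contraction property yields
\begin{equation*}
d_{W_1}^{g(s)}\bigl(\nu_{x_0,t_0}(\cdot,s),\, \nu_{x_0,t}(\cdot,s)\bigr) \leq \int d_{g(t)}(z, x_0)\, d\nu_{x_0,t_0}(z,t).
\end{equation*}
The right-hand side is the first moment of the conjugate heat kernel measure centered at $(x_0, t_0)$ at time $t$, which by Bamler's variance bounds (derived from monotonicity of $\cN_{x_0,t_0}$ and the entropy inequalities used in Lemma \ref{nashgrad}) is controlled by $C(n)\sqrt{t_0 - t} \leq C(n)\sqrt{\beta^-}\, r$. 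For $t \geq t_0$ the symmetric identity $\nu_{x_0,t}(\cdot,s) = \int \nu_{z,t_0}(\cdot,s)\, d\nu_{x_0,t}(z, t_0)$ reduces matters to the first moment of $\nu_{x_0,t}(\cdot, t_0)$, bounded analogously by $C(n)\sqrt{\beta^+}\, r$.

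Summing the spatial and temporal estimates produces an $r$-scaled bound whose coefficient depends only on $\mathcal{A}$, $K$, $\beta^\pm$, giving the required $\underline{\mathcal{A}}$. The main obstacle is the first-moment estimate for the conjugate heat kernel: it does not follow from pointwise curvature control and is precisely the step that forces the use of Bamler's monotone entropy machinery and the resulting integrated Gaussian concentration. A secondary subtlety is that the Ricci bound is only assumed inside $P$, so the distance-distortion argument must be applied along minimizing geodesics staying inside $P$; this is arranged by subdividing the time interval into short pieces on which the geodesic does not escape, and is where the dependence on $\alpha$ as a lower bound for $\mathcal{A}$ enters to ensure constants are uniform at the relevant scale.
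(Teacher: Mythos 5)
The paper does not supply a proof of Lemma~\ref{s3para}: it is quoted verbatim from Bamler's work (Corollary~9.6 of \cite{Ba}), with only the $W_1$-contraction inequality recorded just before the statement. There is therefore no in-paper proof to compare your argument against, and I can only judge your sketch on its own terms.

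Your decomposition is the right skeleton and resembles what actually goes into the cited result: triangle inequality at the time slice $t_0-\beta^- r^2$, a spatial piece handled by $W_1$-contraction plus distance distortion, and a temporal piece handled by the semigroup disintegration of conjugate heat kernels plus a first-moment/variance estimate. Two steps deserve more than hand-waving. First, for the spatial piece, the distortion inequality $|\tfrac{d}{dt}\,d_{g(t)}(x_0,x)| \le K r^{-2} d_{g(t)}(x_0,x)$ requires a Ricci bound along the entire minimizing $g(t)$-geodesic, whereas the hypothesis provides it only on the slab $B_{g(t_0)}(x_0,\mathcal{A}r)\times[t_0-\beta^- r^2, t_0+\beta^+ r^2]$; your proposal to subdivide time and trap the geodesic inside $P$ is the right idea but is precisely the place where the argument can fail unless one verifies that the geodesic stays inside $P$ for the full time range, which is where the quantitative dependence of $\underline{\mathcal{A}}$ on $K,\beta^\pm$ (and on $\alpha$, as a lower bound at the relevant scale) has to be tracked carefully rather than asserted. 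Second, the moment estimate $\int d_{g(t)}(z,x_0)\,d\nu_{x_0,t_0}(z,t)\le C(n)\sqrt{t_0-t}$ is Bamler's variance bound for pointed conjugate heat kernel measures, proved via a direct evolution inequality for a variance functional; it is not a consequence of Nash-entropy monotonicity as you suggest. You are right that this is the crux and that it cannot be recovered from pointwise curvature control, but since this lemma is itself a citation, the write-up should invoke the precise estimate of \cite{Ba} being used rather than a paraphrase.
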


The following relative volume comparison theorem for the Ricci flow is a slightly weaker version of the one in \cite{TiZ2}.
\begin{proposition} \label{s3tzcom} For any $n\in \mathbb{Z}^+$ and $A\geq 1$, there exists $c(n, A)>0$ such that the following holds. Let $\left(M, g(t)\right)$ be a solution of the Ricci flow on a compact $n$-dimensional manifold $M$  for $t\in [- r_0^2, r_0^2]$  such that
$$|\ric|\leq r_0^{-2}, ~ \sR \geq - nr_0^{-2}, ~in~ B_{g(0)}(x_0, r_0) \times [0, r_0^2].$$
Then for any $B_{g(r_0^2)}(x, r) \subset B_{g(r_0^2)}(x_0, Ar_0)$ with $r\leq r_0$ satisfying
$$\sR|_{t=r_0^2} \leq r^{-2} ~in ~ B_{g(r_0^2)}(x, r),$$
we have
$$\frac{\vol_{g(r_0^2)}(B_{g(r_0^2)}(x, r))}{r^n} \geq c~ \frac{\vol_{g(0)}(B_{g(0)}(x_0, r_0))}{r_0^n}. $$

\end{proposition}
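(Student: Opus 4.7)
The plan is to estimate both volumes through Bamler's pointed Nash entropy $\cN^*$ as a common intermediary. Applying the non-inflation bound Lemma \ref{noncol} at $(x_0,0)$ on scale $r_0$ gives
$$\vol_{g(0)}\bigl(B_{g(0)}(x_0,r_0)\bigr) \leq C(n,A)\, r_0^n \exp\bigl(\cN^*_{-r_0^2}(x_0,0)\bigr),$$
and the non-collapsing bound Lemma \ref{noninfl} at $(x,r_0^2)$ on scale $r$, applied to the backward interval $[r_0^2-r^2,\,r_0^2]$ (the scalar-curvature upper bound hypothesis being supplied by $|\sR|\leq n r_0^{-2}\leq n r^{-2}$ via the $|\ric|$-bound, up to absorbing a dimensional constant into the scale), gives
$$\vol_{g(r_0^2)}\bigl(B_{g(r_0^2)}(x,r)\bigr) \geq c(n)\, r^n \exp\bigl(\cN^*_{r_0^2-r^2}(x,r_0^2)\bigr).$$
The proposition is therefore reduced to the Nash-entropy inequality
$$\cN^*_{r_0^2-r^2}(x, r_0^2) \geq \cN^*_{-r_0^2}(x_0,0) - C(n,A).$$

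Since $r_0^2-r^2 \geq 0 > -r_0^2$, the monotonicity in Lemma \ref{s3mono} immediately yields $\cN^*_{r_0^2-r^2}(x, r_0^2) \geq \cN^*_{-r_0^2}(x, r_0^2)$ at no cost, so it remains to bound $\cN^*_{-r_0^2}(x_0,0) - \cN^*_{-r_0^2}(x, r_0^2)$ from above. For this I would apply the gradient-type estimate of Lemma \ref{nashgrad} with $s = -r_0^2$, base points $(x_0,0)$ and $(x, r_0^2)$, and intermediate time $t^* = 0$; the scalar-curvature lower bound $|\sR_{\textnormal{min}}| \leq n r_0^{-2}$ coming from the hypothesis produces
$$\cN^*_{-r_0^2}(x_0,0) - \cN^*_{-r_0^2}(x, r_0^2) \leq C(n)\, r_0^{-1}\, d_{W_1}^{g(0)}\bigl(\delta_{x_0},\, \nu_{x, r_0^2}(0)\bigr) + \tfrac{n}{2}\log 2,$$
using $\nu_{x_0, 0}(0) = \delta_{x_0}$. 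Everything therefore reduces to controlling a single $W_1$-distance on $(M, g(0))$ by $C(n,A)\, r_0$.

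I split this Wasserstein distance via the triangle inequality through the intermediate measure $\nu_{x_0, r_0^2}(0)$. The term $d_{W_1}^{g(0)}\bigl(\nu_{x_0, r_0^2}(0), \nu_{x, r_0^2}(0)\bigr)$ is at most $d_{g(r_0^2)}(x_0, x) \leq (A+1) r_0$ by the contractivity of $W_1$ along the Ricci flow recorded just before Lemma \ref{s3para}. The remaining term $d_{W_1}^{g(0)}\bigl(\delta_{x_0}, \nu_{x_0, r_0^2}(0)\bigr)$ I would bound using Lemma \ref{s3para}: applied to the parabolic neighborhood $P(x_0, 0; r_0, 0, r_0^2) = B_{g(0)}(x_0, r_0)\times[0, r_0^2]$, on which $|\ric|\leq r_0^{-2}$ holds by hypothesis, it yields the inclusion $P(x_0, 0; r_0, 0, r_0^2) \subset P^*(x_0, 0; \mathcal{B}(n) r_0, 0, r_0^2)$ for some dimensional $\mathcal{B}(n)$; since $(x_0, r_0^2)$ trivially lies in the left-hand set, the definition of the $P^*$-neighborhood gives $d_{W_1}^{g(0)}(\delta_{x_0}, \nu_{x_0, r_0^2}(0)) \leq \mathcal{B}(n) r_0$. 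Combining everything and tracing back to the volume comparison completes the argument.

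The main delicacy I anticipate is reconciling the \emph{localized} curvature hypotheses of the proposition with the global scalar-curvature lower bound appearing in the literal statement of Lemma \ref{noncol}; in practice this should be handled either by invoking the appropriate local version of Bamler's non-inflation estimate, or by using the fact that $M$ is compact and the flow runs over a bounded interval so that a global (possibly $g_0$-dependent) lower bound on $\sR$ is automatically available and can be absorbed into the choice of reference radius.
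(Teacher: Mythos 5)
Your proposal is essentially the paper's argument, assembled from the same ingredients in slightly different order. You reduce the volume ratio to a Nash-entropy gap via Lemma~\ref{noncol} and Lemma~\ref{noninfl}, invoke monotonicity (Lemma~\ref{s3mono}) to pass from scale $r$ to scale $r_0$, and control the resulting entropy difference with Lemma~\ref{nashgrad} and the $P^*$-inclusion of Lemma~\ref{s3para} plus $W_1$-contractivity --- exactly the toolkit the paper uses. The only cosmetic divergence is where you place the split: you apply the space-time estimate of Lemma~\ref{nashgrad} once, directly between $(x_0,0)$ and $(x,r_0^2)$, and decompose the resulting $W_1$-distance with the triangle inequality through $\nu_{x_0,r_0^2}(0)$; the paper instead first compares the entropies $\cN^*_{-1}(x_0,1)$ and $\cN^*_{-1}(x,1)$ at the fixed time $t=1$ via the pointwise gradient bound, and separately compares $\cN^*_{-1}(x_0,0)$ to $\cN^*_{-1}(x_0,1)$ between time slices via the space-time version. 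These are interchangeable reorganizations of the same estimate. Your flagged ``delicacy'' about reconciling the localized curvature hypotheses with the global scalar-curvature lower bound in Lemma~\ref{noncol} is real; the paper does not address it either and implicitly relies on the localized form of Bamler's estimates, so you are not missing anything the paper supplies.
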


\begin{proof} By parabolic scaling, we can assume that $r_0=1$.  The pointed Nash entropy is comparable to volume of balls in principle. 

First we  will compare the pointed Nash entropy at the same time slice $t=1$. By assumption, $x\in B_{g(1)}(x, r) \subset B_{g(1)}(x_0, A)$, hence
$$d_{g(1)}(x_0, x) \leq A.$$
By Lemma \ref{nashgrad}, after  choosing $\sR_{\textnormal{min}} = -n$, we have
$$|\nabla \cN^*_{-1}(\cdot, 1)|_{g(1)} \leq 2 n $$
on $M$. Therefore,
\begin{equation}\label{s3nashcom1}
\left| \cN^*_{-1}(x_0, 1) - \cN^*_{-1}(x, 1) \right| \leq 2n~ d_{g(1)}(x_0, x) \leq 2nA. 
\end{equation}

Next, we will compare the pointed Nash entropy at different time slices. %
We apply Lemma \ref{nashgrad} at two different base points $(x_0, 0)$ and $(x_0, 1)$ after choosing $s=-1$ and $t^*=0$. Then 
\begin{eqnarray*}
&& \left| \cN^*_{-1}(x_0, 0) - \cN^*_{-1} (x_0, 1)\right| \\
&\leq &  \left( \frac{n}{2(0-(-1))} + n \right)^{1/2}  d_{W_1}^{g(0)}( \nu_{x_0, 0}(0), \nu_{x_0, 1}(0)) + \frac{n}{2} \log \left(\frac{1-(-1)}{0-(-1)}\right) \\
&\leq& n ~d_{W_1}^{g(0)}( \nu_{x_0, 0}(0), \nu_{x_0, 1}(0))  + n.
\end{eqnarray*}
By Lemma \ref{s3para}, 
$$P(x_0, 0; 1, 0, 1) \subset P^*(x_0, 0; \gamma, 0, 1 )$$
by the Ricci curvature assumption. 
Therefore, by the fact that $(x_0, 1) \in P^*(x_0, 0; \gamma, 0, 1 )$ and definition, we have 
$$ d_{W_1}^{g(0)} \left( \nu_{x_0, 0} (\cdot, 0 ),  \nu_{x_0, 1}(\cdot, 0)  \right)= d_{W_1}^{g(0)} \left( \delta_{x_0},  \nu_{x_0, 1}(\cdot, 0)  \right) \leq \gamma .$$
Immediately, we have
\begin{equation}\label{s3nashcom2}
\left| \cN^*_{-1}(x_0, 0) - \cN^*_{-1} (x_0, 1)\right| \leq n\gamma +n.
\end{equation}

Finally, we are ready to proof the lemma.  Combining estimates (\ref{s3nashcom1}), (\ref{s3nashcom2}) with  Lemma \ref{s3mono}, Lemma \ref{noncol} and Lemma \ref{noninfl}, we have
\begin{eqnarray*}
&&  \frac{ \vol_{g(1)}(B_{g(1)}(x, r)) }{\vol_{g(0)}(B_{g(0)}(x_0, 1))}  \\
&\geq& c r^n \exp \left( \cN^*_{-r^2}(x, 1) - \cN^*_{-1}(x_0, 0) \right)  \\
&\geq& c r^n \exp \left( \cN^*_{-1}(x, 1) - \cN^*_{-1}(x_0, 0)\right)  \\
&\geq& c r^n \exp \left( \cN^*_{-1}(x, 1) - \cN^*_{-1}(x_0, 1)+n\gamma+n\right)  \\
&\leq& cr^n \exp \left( 2nA+n\gamma+n\right).
 \end{eqnarray*}
This completes the proof of the theorem.
\end{proof}


\section{Diameter estimate }

We now return to the K\"ahler-Ricci flow (\ref{krflow}) discussed in \S 2. Let $X$ be an $n$-dimensional K\"ahler manifold with semi-ample $K_X$. Recall that  
$$\Phi: X \rightarrow X_{\textnormal{can}}$$
is the unique holomorphic map from $X$ to its canonical model $X_{\textnormal{can}}$ induced by the pluricanonical system. We assume that 
$$1\leq \dim X_{\textnormal{can}} = m \leq n-1. $$ 
The goal of this section is to establish a uniform diameter bound for long-time solutions of the K\"ahler-Ricci flow using the techniques developed in \cite{FGS, STZ}.

We keep the same notations as before by letting $\cS_{X_{\textnormal{can}}}$ be the set of critical values of $\Phi$ on $X_{\textnormal{can}}$ and $\cS_X = \Phi^{-1}(\cS_{\textnormal{can}})$, $X_{\textnormal{can}}^\circ = X_{\textnormal{can}}\setminus \cS_{X_{\textnormal{can}}}$ and $X^\circ=X\setminus \cS_X$.  We can pick an effective $\mathbb{Q}$-Cartier divisor $\mathcal{D}$  on $X_{\textnormal{can}}$  satisfying the following.  
\begin{enumerate}

\item $\Phi^* \mathcal{D}= K_X$. 

\item  $\cS_{X_{\textnormal{can}}}$ is contained in the support of $\mathcal{D}$.

\end{enumerate}
We let $\sigma$ be the defining section of $\mathcal{D}$ and for conveniences we use $\sigma$ for $\Phi^*\sigma$.

Now we consider a log resolution of $X_{\textnormal{can}}$ defined by
$$\Psi: W \rightarrow X_{\textnormal{can}}$$
such that

\begin{enumerate}

\item $W$ is smooth and the exceptional locus of $\Psi$ is a union of smooth divisors of simple normal crossings.

\item The pullback of $\mathcal{D}$ by $\Psi$, is a union of smooth divisors of simple normal crossings.

\end{enumerate}  The K\"ahler form  $\chi$ associated to the Fubini-Study metric $\chi$ restricted to $X_{\textnormal{can}}$ also lies in $[\mathcal{D}]$. For conveniences, we use $\sigma$ for $\Psi^*\sigma$ on $W$.
Let $Z$ be the blow-up of $X$ induced by $\Psi: W \rightarrow X_{\textnormal{can}}$. Then we can define the induced holomorphic maps $  \Psi':   Z \rightarrow X$ and $\Phi': Z \rightarrow W$ satisfying the following diagram.
\begin{equation}\label{diag1}
\begin{diagram}
\node{Z} \arrow{s,l}{ \Phi' }  \arrow{e,t}{\Psi'}   \node{X}   \arrow{s,r}{ \Phi}    \\
\node{W} \arrow{e,t}{\Psi}      \node{X_{\textnormal{can}}}
\end{diagram}
\end{equation}

 We also pick the hermitian metric $h$ for the $\mathbb{Q}$-line bundle associated to $\mathcal{D}$ on $X_{\textnormal{can}}$ such that 
 $$\ric(h) = \chi.$$
  For conveniences, we still use $h$  for $\Phi^*h$ on $X$, $\Psi^*h$ on $W$ and $(\Psi\circ\Phi')^*h$ on $Z$.  By the same notations, we use $h$ for $\Psi^* h$ on $W$ and $(\Psi\circ\Phi')^*h$ on $Z$. Away from zeros of $\sigma$, $Z$ can be identified as $X$ by assuming the blow-ups take place at the support of $ \sigma$.  

For simplicity, we assume that   
$$|\sigma|^2_h \leq 1$$
on $X_{\textnormal{can}}$.
Let $F$ be the standard increasing smooth cut-off function defined on $[0, \infty)$ satisfying

\begin{enumerate}
\item $F(x)=0$,  if $x\in [0, 1/2]$,

\medskip

\item  $F(x)=1$,  if $x\in [1, \infty)$,

\end{enumerate}
Let $$\eta_\varepsilon = \max \left( \log | \sigma|^2_{h}, \log \varepsilon \right)$$
for some sufficiently small $\varepsilon>0$ to be determined later.
By the construction of $ \sigma$ and $h$, we have
$$ \ddbar \log | \sigma|^2_{h} +  \chi \geq 0$$
as a current, 
therefore  $$\eta_\varepsilon \in \textnormal{PSH}(X, \chi) \cap C^0(X). $$
In particular, for sufficiently small $\varepsilon>0$, we have $$ \log \varepsilon \leq \eta_\varepsilon \leq 0. $$
We define $\rho_\varepsilon$ by
\begin{equation}
\rho_\varepsilon = F\left( \frac{100\eta_\varepsilon}{\log \varepsilon} \right)
\end{equation}
and
\begin{equation}\label{deset}
\cS_\varepsilon =   \left\{ |\sigma|^{200}_{h} <\varepsilon\right\}. 
\end{equation}
For sufficiently large $k>0$, we have 
\begin{equation}\label{gradchi}
\sup_X \left |\partial |\sigma|_{h} ^{2k}\right|_{\chi}  <\infty
\end{equation}
 due to Lemma \ref{c0} from the parabolic Schwarz lemma in \cite{ST1, ST2}. Without loss of generality, we can assume $k=100$ for simplicity.
 The following lemma also shows that the open set $\cS_\varepsilon$ has very small volume.

\begin{lemma} \label{smvol} Let $g(t)$ be the long-time solution of the K\"ahler-Ricci flow (\ref{krflow}) on $X$ and let $\omega(t)$ be the corresponding K\"ahler forms. Then for any $\delta>0$, there exists $\varepsilon>0$ such that for all $t \geq 0$,
$$\int_{\cS_\varepsilon }  \omega(t)^n \leq \delta e^{ -( n-\kappa)t},$$
or equivalently,
$$\vol_{g(t)}( \cS_\varepsilon) \leq \delta e^{-(n-\kappa)t}. $$

\end{lemma}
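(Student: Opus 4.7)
The plan is to use the Monge--Amp\`ere flow (\ref{maflow}) to reduce the volume statement against $\omega(t)^n$ to a statement against the fixed smooth volume form $\Omega$, and then exploit that $\bigcap_{\varepsilon > 0}\cS_\varepsilon$ is a proper analytic subvariety.

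First, I rewrite the evolving volume form. From (\ref{maflow}) we have the identity
\begin{equation*}
e^{(n-\kappa)t}\,\omega(t)^n = e^{\dot{\varphi} + \varphi}\,\Omega,
\end{equation*}
and Lemma \ref{c0} gives a uniform bound $|\varphi| + |\dot\varphi| \leq C$ on $X\times[0,\infty)$. Consequently there is a constant $C_1 > 0$, depending only on $g_0$, such that
\begin{equation*}
\omega(t)^n \leq C_1 \, e^{-(n-\kappa)t}\,\Omega \quad \text{on } X\times[0,\infty).
\end{equation*}

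Integrating this estimate over $\cS_\varepsilon$ yields
\begin{equation*}
\int_{\cS_\varepsilon}\omega(t)^n \leq C_1\, e^{-(n-\kappa)t}\int_{\cS_\varepsilon}\Omega,
\end{equation*}
so it suffices to prove $\int_{\cS_\varepsilon}\Omega \to 0$ as $\varepsilon\to 0^+$. By construction, $\cS_\varepsilon = \{|\sigma|_h^{200} < \varepsilon\}$ is a decreasing family of open sets (as $\varepsilon\downarrow 0$), and
\begin{equation*}
\bigcap_{\varepsilon>0}\cS_\varepsilon = \{\sigma = 0\},
\end{equation*}
which is a proper analytic subvariety of $X$ (since $\mathcal{D}$ is a nonzero effective $\mathbb{Q}$-Cartier divisor) and therefore has zero Lebesgue measure with respect to the smooth volume form $\Omega$. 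Because $\int_X \Omega < \infty$, the dominated convergence theorem gives $\lim_{\varepsilon\to 0^+}\int_{\cS_\varepsilon}\Omega = 0$.

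Given $\delta > 0$, I then choose $\varepsilon > 0$ small enough that $C_1 \int_{\cS_\varepsilon}\Omega \leq \delta$, and the desired inequality follows uniformly in $t\geq 0$. The equivalent statement $\vol_{g(t)}(\cS_\varepsilon) \leq \delta\, e^{-(n-\kappa)t}$ is just the same inequality up to the combinatorial factor $n!$ relating $\omega(t)^n$ and the Riemannian volume form. No step here is really an obstacle: the core content is the uniform $L^\infty$ control of the Ricci potential $\dot\varphi$ and $\varphi$ given by Lemma \ref{c0}, which has already been established, together with the elementary fact that analytic subvarieties have measure zero.
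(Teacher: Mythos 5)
Your argument is correct, and it is genuinely simpler than the paper's. From the Monge--Amp\`ere flow (\ref{maflow}) one reads off $e^{(n-\kappa)t}\omega(t)^n = e^{\dot\varphi+\varphi}\,\Omega$, and Lemma~\ref{c0} supplies the uniform bound $|\varphi|+|\dot\varphi|\le C$, which gives the two-sided pointwise comparison $C^{-1}e^{-(n-\kappa)t}\Omega\le\omega(t)^n\le C\,e^{-(n-\kappa)t}\Omega$ on $X\times[0,\infty)$. Localizing to $\cS_\varepsilon$ and noting that $\bigcap_{\varepsilon>0}\cS_\varepsilon=\{\sigma=0\}$ is a proper analytic subvariety, hence $\Omega$-null, finishes the proof.

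The paper takes a different, more involved route modeled on Song's argument in \cite{S2} (attributed there to Sturm): it introduces the cutoff $\rho_\varepsilon$, observes $\rho_\varepsilon\equiv 1$ on $\cS_\varepsilon$, and compares $\int_X\rho_\varepsilon\,\omega(t)^n$ to $\int_X\rho_\varepsilon\,\omega_t^n$ by expanding $\omega(t)^n-\omega_t^n$ telescopically and integrating by parts onto $\ddbar\rho_\varepsilon$, producing a factor $(-\log\varepsilon)^{-1}$. That method uses only the $C^0$ bound on the potential $\varphi$ (plus positivity of $\ddbar\eta_\varepsilon+\chi$ and class-level intersection numbers), and therefore transfers to situations---such as the elliptic Monge--Amp\`ere settings of \cite{S2}---where no uniform bound on the full density $\omega^n/\Omega$ (equivalently, on $\dot\varphi$) is available. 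In the present K\"ahler--Ricci flow setting, where Lemma~\ref{c0} already gives $|\dot\varphi|\le C$, your direct pointwise comparison is cleaner and fully adequate. One cosmetic note: the paper's set in (\ref{deset}) is $\cS_\varepsilon=\{|\sigma|_h^{200}<\varepsilon\}$, while its proof works with $\{|\sigma|_h^{200}\le\varepsilon\}$; as you implicitly use, this distinction is immaterial since the boundary has measure zero.
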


\begin{proof} The proof of the lemma is built on the idea in \cite{S2} due to \cite{Stu}. First we notice that $\rho_\varepsilon \geq 1$ when $|\sigma|^{200}_h \leq \varepsilon$ and so
$$\int_{\left\{ |\sigma|^{200}_{h} \leq \varepsilon \right\} }   \omega(t)^n \leq \int_X \rho_\varepsilon  \omega(t)^n. $$
Also it is straightforward to verify that
$$\lim_{\varepsilon\rightarrow 0} \int_{\left\{ |\sigma|^{200}_{h} \leq \varepsilon \right\} }  \Omega = 0  $$
and
$$ \int_X \rho_\varepsilon~ \omega_t^n \leq C e^{ -( n-\kappa)t} \int_X \rho_\varepsilon~ \Omega$$
for all $t\geq 0$ by Lemma \ref{c0}, where $C>0$ is a uniform constant.  Also using integration by part, we have 
$$\int_X \rho_\varepsilon ( \omega(t)^n - \omega_t^n) = \sum_{k=0}^{n-1} \int_X \rho_\varepsilon \ddbar \varphi \wedge  \omega(t)^k \wedge \omega_t^{n-k-1},$$
where $\omega(t) = \omega_t + \ddbar \varphi(t)$ is given in the Monge-Amp\'ere flow (\ref{maflow}). 
Following similar calculations from \cite{S2}, we have 
\begin{eqnarray*}
&& \int_X \rho_\varepsilon \ddbar \varphi\wedge    \omega(t)^k  \wedge \omega_t^{n-k-1}\\
&=& \int_X \varphi \ddbar \rho_\varepsilon \wedge  \omega(t)^k \wedge \omega_t^{n-k-1} \\
&=&  \int_X \varphi  \left( 100(\log \varepsilon)^{-1}F' \ddbar \eta_\varepsilon + 10^4( \log \varepsilon)^{-2}F'' \sqrt{-1} \partial \eta_\varepsilon \wedge \dbar \eta_\varepsilon \right)   \wedge  \omega(t)^k  \wedge \omega_t^{n-k-1} \\
&\leq& C(-\log \varepsilon)^{-1} \int_X (\ddbar \eta_\varepsilon + \chi)    \wedge   \omega(t)^k  \wedge \omega_t^{n-k-1} + C (-\log \varepsilon)^{-1} \int_X    \chi \wedge  \omega(t)^k  \wedge \omega_t^{n-k-1}\\
&& +C ( - \log\varepsilon)^{-2}\int_X   \partial \eta_\varepsilon \wedge \dbar \eta_\varepsilon   \wedge   \omega(t)^k  \wedge \omega_t^{n-k-1} \\
&\leq& 2C (-\log\varepsilon)^{-1} [\chi]\cdot[ \omega(t) ]^{n-1}-  C (-\log\varepsilon)^{-2}\int_X  \eta_\varepsilon ( \chi + \ddbar \eta_\varepsilon )  \wedge   \omega(t)^k  \wedge \omega_t^{n-k-1} \\
&&+ C (-\log\varepsilon)^{-2}\int_X  \eta_\varepsilon \chi \wedge   \omega(t)^k  \wedge \omega_t^{n-k-1} \\
&\leq& 4 C (-\log\varepsilon)^{-1} [\chi]\cdot [ \omega(t)]^{n-1}   \\
&\leq& C^2 (-\log\varepsilon)^{-1}  e^{ -( n-\kappa)t}
\end{eqnarray*}
for $0\leq k \leq n-1$ and all $t\geq 0$, where $C>0$ is a uniform constant. The lemma then easily follows by combining the above estimates.
\end{proof}

The following lemma is proved by Song-Tian-Zhang (Lemma 2.7 \cite{STZ}) based on the distance estimate in \cite{FGS} for the diameter bound and almost geodesic convexity for $(X_{\textnormal{can}}^\circ, g_{\textnormal{can}})$.  We let 
\begin{equation}\label{defY}
(\mathcal{Y}, d_{\mathcal{Y}}) = \overline{(X_{\textnormal{can}}^\circ, g_{\textnormal{can}})} 
\end{equation}
be the metric completion of $(X_{\textnormal{can}}^\circ, g_{\textnormal{can}})$. 
\begin{lemma} \label{conv51}   For any $\delta>0$ and $\varepsilon>0$,  there exists $0<\varepsilon' < \varepsilon$ such that for any two points
$y_1, y_2 \in X_{\textnormal{can}} \setminus \Phi(\cS_{\varepsilon})$, 
 there exists  a smooth path $\gamma \subset X_{\textnormal{can}} \setminus \Phi(\cS_{\varepsilon'})$ joining $y_1 $ and $y_2$ satisfying
$$\mathcal{L}_{g_{\textnormal{can}}} (\gamma)   \leq d_{\mathcal{Y} }(y_1, y_2) + \delta, $$
where $\mathcal{L}_{g_{\textnormal{can}}}(\gamma)  $ is the arc length of $\gamma$ with respect to the metric $g_{\textnormal{can}}$.
In particular, $(\mathcal{Y}, d_\mathcal{Y})$ is a compact metric space and 
$$\diam_{g_{\textnormal{can}}}(X_{\textnormal{can}}^\circ ) < \infty .$$
\end{lemma}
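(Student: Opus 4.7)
The plan is to reduce the lemma to two ingredients already available in the references: the diameter bound $\diam_{g_{\textnormal{can}}}(X_{\textnormal{can}}^\circ)<\infty$ of \cite{FGS}, and the compactness of the metric completion $(\mathcal{Y},d_{\mathcal{Y}})$ obtained in \cite{STZ} from this diameter bound together with the $L^\infty$-control of the local potentials of $\omega_{\textnormal{can}}$. With these in hand, the last two assertions of the lemma are immediate, and the proof can be concentrated on the quantitative almost geodesic convexity statement.

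For the main claim, I would fix $y_1,y_2\in X_{\textnormal{can}}\setminus\Phi(\cS_\varepsilon)$ and use the fact that, by construction, $d_{\mathcal{Y}}$ is the length-metric extension of $g_{\textnormal{can}}$ from $X_{\textnormal{can}}^\circ$. This produces, after a routine smoothing, a smooth path $\tilde\gamma\subset X_{\textnormal{can}}^\circ$ from $y_1$ to $y_2$ with
\[
\mathcal{L}_{g_{\textnormal{can}}}(\tilde\gamma)\le d_{\mathcal{Y}}(y_1,y_2)+\delta/2.
\]
Such a $\tilde\gamma$ may enter $\Phi(\cS_{\varepsilon'})$ for arbitrary $\varepsilon'<\varepsilon$. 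The heart of the proof is then to choose $\varepsilon'$ small enough that every component of $\tilde\gamma\cap\Phi(\cS_{\varepsilon'})$ can be replaced by a detour running inside the annular shell $\Phi(\cS_\varepsilon)\setminus\Phi(\cS_{\varepsilon'})$, with the total added length bounded by $\delta/2$. Concatenating these detours with the portions of $\tilde\gamma$ outside $\Phi(\cS_{\varepsilon'})$ and smoothing at the finitely many junctions then gives the required $\gamma$.

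The quantitative input needed for the detour step is precisely the \emph{almost geodesic convexity} supplied by \cite{FGS}: as $\varepsilon'\to 0$ with $\varepsilon$ fixed, the intrinsic $g_{\textnormal{can}}$-diameter of the shell $\Phi(\cS_\varepsilon)\setminus\Phi(\cS_{\varepsilon'})$ tends to zero uniformly. This is established there by a Moser iteration on the twisted K\"ahler-Einstein equation \eqref{twke}, with cut-off weights built from $|\sigma|_h^{2N}$ for large $N$, of exactly the type used to define the $\eta_\varepsilon$ and $\rho_\varepsilon$ in this section. Combined with compactness of $(\mathcal{Y},d_{\mathcal{Y}})$, which guarantees that $\tilde\gamma$ is covered by finitely many small balls in this shell, this reduces the path surgery to a local problem in each ball, where the required short detour exists by a direct construction on the smooth Riemannian manifold $(X_{\textnormal{can}}^\circ, g_{\textnormal{can}})$.

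The main obstacle is therefore the shell-diameter estimate itself; once granted, the remaining path-surgery and smoothing are routine. A secondary, but subordinate, technical point is to verify that the boundary $\partial\Phi(\cS_{\varepsilon'})$ is smooth for generic $\varepsilon'$ (so that boundary-hugging detours make sense), which follows from Sard's theorem applied to the plurisubharmonic function $|\sigma|_h^2$ on $X_{\textnormal{can}}^\circ$ for a.e.\ $\varepsilon'$, allowing us to shrink $\varepsilon'$ slightly if needed.
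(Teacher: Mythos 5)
The paper does not prove this lemma; it is quoted verbatim from \cite{STZ} (Lemma 2.7) and attributed there to the distance bounds of \cite{FGS} together with the almost geodesic convexity argument developed in \cite{STZ}, so the relevant comparison is against that reference rather than against any argument in the present text.

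Your sketch contains a genuine gap at the crucial detour step. You assert that as $\varepsilon'\to 0$ with $\varepsilon$ fixed, the intrinsic $g_{\textnormal{can}}$-diameter of the shell $\Phi(\cS_\varepsilon)\setminus\Phi(\cS_{\varepsilon'})$ tends to zero. This is false: since $\cS_{\varepsilon'}\subset\cS_\varepsilon$ for $\varepsilon'<\varepsilon$, the shells $\{ \varepsilon' \le |\sigma|_h^{200} < \varepsilon\}$ are monotonically \emph{increasing} as $\varepsilon'$ decreases and exhaust $\Phi(\cS_\varepsilon)\setminus\{|\sigma|_h=0\}$; their $g_{\textnormal{can}}$-diameter is therefore non-decreasing as $\varepsilon'\downarrow 0$ and is bounded away from zero whenever $\cS_\varepsilon\neq\emptyset$. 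No Moser-type iteration changes this monotonicity, so the proposed surgery, which replaces a deep excursion of $\tilde\gamma$ by a path hugging the shell, gives no control on the added length. The mechanism actually used in \cite{STZ} and \cite{FGS} is of a different kind: one exploits the Ricci lower bound $\ric(g_{\textnormal{can}})\ge -g_{\textnormal{can}}$ furnished by (\ref{twke}) together with volume comparison and Sobolev/capacity estimates to show that $\Phi(\cS_{\varepsilon'})$ has small $g_{\textnormal{can}}$-volume and small capacity as $\varepsilon'\to 0$, and then uses an averaging argument over families of paths (rather than re-routing a fixed path through a narrow shell) to extract one near-minimizing path that already avoids $\Phi(\cS_{\varepsilon'})$. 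Without supplying such a volume/capacity input, your path-surgery step does not close.
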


We remark that Lemma \ref{conv51} is proved in \cite{ZY, ZhY} for  the special case of  $\dim X_{can}=1$.

Immediately, we can control the distance for the K\"ahler-Ricci flow away from singular fibres as shown in the following lemma.

\begin{corollary} \label{aldis}  For any $\delta>0$ and $ \varepsilon >0$, there exists $T>0$ such that for any two points $x_1, x_2 \in X\setminus \cS_\epsilon>0$ and $t\geq T$, we have
$$ d_{g(t)} (x_1, x_2) \leq \diam_{d_\mathcal{Y}}(\mathcal{Y}) + \delta.$$

\end{corollary}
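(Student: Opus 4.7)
The plan is to assemble the path from Lemma \ref{conv51} on $X_{\textnormal{can}}$ together with the fibrewise collapsing and $C^0$-convergence from Lemma \ref{s2fib} in order to construct, for every admissible pair $(x_1,x_2)$, a curve in $X$ whose $g(t)$-length is close to $d_{\mathcal Y}(\Phi(x_1),\Phi(x_2))\leq \diam_{d_{\mathcal Y}}(\mathcal Y)$.

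Set $y_i=\Phi(x_i)$. Because $\cS_{X_{\textnormal{can}}}\subset\{\sigma=0\}$ and $\sigma$ on $X$ is pulled back from $X_{\textnormal{can}}$, one has $\Phi(\cS_\varepsilon)=\{|\sigma|^{200}_h<\varepsilon\}\subset X_{\textnormal{can}}$, so $y_1,y_2$ lie in the compact set $X_{\textnormal{can}}\setminus\Phi(\cS_\varepsilon)\subset X_{\textnormal{can}}^\circ$. Apply Lemma \ref{conv51} with tolerance $\delta/4$ to obtain some $\varepsilon'<\varepsilon$ (depending only on $\varepsilon,\delta$) and a smooth path $\gamma\subset X_{\textnormal{can}}\setminus\Phi(\cS_{\varepsilon'})$ joining $y_1$ and $y_2$ with
$$\mathcal L_{g_{\textnormal{can}}}(\gamma)\leq d_{\mathcal Y}(y_1,y_2)+\delta/4\leq \diam_{d_{\mathcal Y}}(\mathcal Y)+\delta/4.$$

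Next, I would lift $\gamma$ to a smooth curve $\tilde\gamma\subset X\setminus\cS_{\varepsilon'}$ with $\Phi\circ\tilde\gamma=\gamma$, using for instance a horizontal lift with respect to $g_0$ (which is possible since $\Phi$ is a submersion on $X^\circ$). The curve $\tilde\gamma$ sits inside the compact set $\{|\sigma|^{200}_h\geq\varepsilon'\}\subset X^\circ$. By Lemma \ref{s2fib}, $g(t)\to \Phi^*g_{\textnormal{can}}$ uniformly in $C^0$ on this set, and for a horizontal tangent vector $v$ one has $|v|^2_{\Phi^*g_{\textnormal{can}}}=|\Phi_*v|^2_{g_{\textnormal{can}}}$. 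Thus the $g(t)$-length of $\tilde\gamma$ differs from $\mathcal L_{g_{\textnormal{can}}}(\gamma)$ by an error that tends to zero uniformly in the choice of $\gamma$; in particular there is $T_1=T_1(\varepsilon',\delta)$ such that
$$\mathcal L_{g(t)}(\tilde\gamma)\leq \mathcal L_{g_{\textnormal{can}}}(\gamma)+\delta/4\quad\text{for all }t\geq T_1.$$

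Finally, each $x_i$ and its lift $\tilde x_i$ lie in the same fibre $\Phi^{-1}(y_i)$. Applying the second assertion of Lemma \ref{s2fib} to the compact set $\cK=X_{\textnormal{can}}\setminus\Phi(\cS_\varepsilon)$ yields $\omega(t)|_{\Phi^{-1}(y_i)}\leq Ce^{-t}\omega_0|_{\Phi^{-1}(y_i)}$ with a constant $C$ uniform in $y_i\in\cK$, hence $\diam_{g(t)}(\Phi^{-1}(y_i))\leq C'e^{-t/2}$. Choose $T_2$ so that $C'e^{-T_2/2}\leq\delta/4$, and set $T=\max(T_1,T_2)$. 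The triangle inequality then gives
$$d_{g(t)}(x_1,x_2)\leq d_{g(t)}(x_1,\tilde x_1)+\mathcal L_{g(t)}(\tilde\gamma)+d_{g(t)}(\tilde x_2,x_2)\leq \diam_{d_{\mathcal Y}}(\mathcal Y)+\delta$$
for all $t\geq T$.

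The delicate point in the argument is uniformity in $(x_1,x_2)$: a naive application of Lemma \ref{s2fib} yields estimates that a priori depend on the specific lifted curve. This is exactly where the quantitative strengthening in Lemma \ref{conv51} (existence of a single smaller $\varepsilon'$ that works for all pairs $y_1,y_2$) is essential, for it confines every lift $\tilde\gamma$ to the fixed compact set $X\setminus\cS_{\varepsilon'}\subset X^\circ$; once this is achieved, both the $C^0$-convergence of $g(t)\to\Phi^*g_{\textnormal{can}}$ on this compact set and the fibre diameter decay become automatically uniform, and the choice of $T$ no longer depends on $(x_1,x_2)$.
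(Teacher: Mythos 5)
Your argument is correct and is essentially a fleshed-out version of the paper's own (very terse) proof: the paper likewise deduces $d_{g(t)}(x_1,x_2)\le d_{g_{\textnormal{can}}}(\Phi(x_1),\Phi(x_2))+\delta$ from the $C^0$-convergence of Lemma \ref{s2fib} together with the uniform almost-geodesic paths of Lemma \ref{conv51}, and you have simply made explicit the lifting, the length comparison on the fixed compact set $X\setminus\cS_{\varepsilon'}$, and the fibre-diameter collapse. The only point worth spelling out a bit more (implicit in your last paragraph) is that passing from $C^0$-closeness of $g(t)$ to $\Phi^*g_{\textnormal{can}}$ to a uniform \emph{length} estimate requires a uniform bound on $\mathcal L_{g_0}(\tilde\gamma)$, which holds because $g_0$ and $\Phi^*g_{\textnormal{can}}$ are comparable on horizontal vectors over the compact set $X\setminus\cS_{\varepsilon'}$ and $\mathcal L_{g_{\textnormal{can}}}(\gamma)\le\diam_{d_{\mathcal Y}}(\mathcal Y)+\delta/4$ is already bounded.
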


\begin{proof} 
Since $g(t)$ converges to $\Phi^* g_{\textnormal{can}}$ uniformly in $C^0(X\setminus\cS_\varepsilon)$,  for any $x_1, x_2 \in X\setminus \cS_\varepsilon$ and $t>T$, we have 
\begin{eqnarray*}
d_{g(t)}(x_1, x_2) 
&\leq& d_{g_{\textnormal{can}}}(\Phi(x_1), \Phi(x_2)) + \delta  \\
&\leq&\diam_{d_\mathcal{Y}}(\mathcal{Y}) + \delta, 
\end{eqnarray*}
where the first inequality follows from Lemma \ref{s2fib} and the last inequality follows from   Lemma \ref{conv51}.
\end{proof}

We now choose a fixed base point $x_0\in X^\circ$ and $y_0 = \Phi(x_0) \in X_{\textnormal{can}}^\circ$. Since $y_0$ is a regular point of $X_{\textnormal{can}}$, there  exists   $0<\gamma< 1$ such that
$$ B_{g_{\textnormal{can}}} (p_0, 2\gamma) \subset\subset X_{\textnormal{can}}^\circ. $$

 \begin{lemma} \label{s4vol} For any $0<r_0 < \gamma$,  there exists $T>0$ such that for any $t>T$, 
\begin{equation}\label{contain}
   \Phi^{-1} (B_{g_{\textnormal{can}}} (p_0, 2^{-1} r_0)) \subset  B_{g(t)}(x_0, r_0) \subset \Phi^{-1} (B_{g_{\textnormal{can}}} (p_0, 2r_0)). 
   \end{equation}
 Furthermore, there exist $C=C(r_0, x_0)>0$ such that for all $t>T$, 
\begin{equation}\label{volsec5}
 C^{-1}   e^{ - (n-\kappa) t} \leq  \vol_{g(t)}( B_{g(t)}(x_0, r_0)) \leq C  e^{ - (n-\kappa) t} . 
 \end{equation}
 
 \end{lemma}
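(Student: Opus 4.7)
The plan is to extract from Lemma \ref{s2fib} two quantitative ingredients on the compact set $\Phi^{-1}(K) \subset X^\circ$, where $K := \overline{B_{g_{\textnormal{can}}}(p_0, 2\gamma)} \subset\subset X_{\textnormal{can}}^\circ$: (i) a horizontal length comparison, that for every $\delta > 0$ and every $t$ sufficiently large, any smooth curve $\gamma \subset \Phi^{-1}(K)$ satisfies $(1-\delta)\, L_{g_{\textnormal{can}}}(\Phi\circ\gamma) \leq L_{g(t)}(\gamma)$, with a matching upper bound of the same type modulo an $O(\delta)$ error plus the vertical contribution; and (ii) a fibrewise collapsing estimate, namely that every fibre of $\Phi$ over $K$ has $g(t)$-diameter at most $C e^{-t/2}$. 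Both are direct consequences of the $C^0$-convergence $g(t) \to \Phi^* g_{\textnormal{can}}$ on $X^\circ$ and the fibrewise sandwich in Lemma \ref{s2fib}.

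For the upper containment in (\ref{contain}), take $x \in B_{g(t)}(x_0, r_0)$ and let $\gamma$ be a $g(t)$-minimizing curve from $x_0$ to $x$ of length $< r_0 < \gamma$. A budget argument shows $\gamma \subset \Phi^{-1}(K)$: if $\gamma$ first exited $\Phi^{-1}(K)$ at some time $s^*$, applying (i) to $\gamma|_{[0, s^*]}$ would give $L_{g(t)}(\gamma|_{[0, s^*]}) \geq (1-\delta)\, d_{g_{\textnormal{can}}}(p_0, \Phi(\gamma(s^*))) = (1-\delta)(2\gamma) > r_0$, a contradiction once $\delta$ is small. Applying (i) to the full curve then gives $d_{g_{\textnormal{can}}}(p_0, \Phi(x)) \leq (1-\delta)^{-1} r_0 < 2 r_0$. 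For the lower containment, given $x \in \Phi^{-1}(B_{g_{\textnormal{can}}}(p_0, r_0/2))$, pick a smooth $g_{\textnormal{can}}$-curve $\tilde\gamma \subset B_{g_{\textnormal{can}}}(p_0, r_0/2)$ from $p_0$ to $\Phi(x)$ of length $< r_0/2 + \delta$, lift it horizontally (with respect to any fixed smooth horizontal distribution on $\Phi^{-1}(K)$) to a curve $\gamma_h \subset \Phi^{-1}(B_{g_{\textnormal{can}}}(p_0, r_0))$ starting at $x_0$, and concatenate with a curve inside the fibre over $\Phi(x)$ joining the endpoint of $\gamma_h$ to $x$ of $g(t)$-length at most $C e^{-t/2}$ provided by (ii). The total $g(t)$-length is bounded by $r_0/2 + 2\delta + C e^{-t/2} < r_0$ for $t$ large.

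For the volume estimate (\ref{volsec5}), the Monge-Amp\`ere flow (\ref{maflow}) rewrites as $\omega(t)^n = e^{-(n-\kappa)t} e^{\ddt{\varphi} + \varphi}\, \Omega$, and Lemma \ref{c0} furnishes a uniform two-sided bound on $\ddt{\varphi} + \varphi$. Hence $\omega(t)^n$ is uniformly comparable to $e^{-(n-\kappa)t}\, \Omega$ on all of $X$. Integrating over the two inclusions in (\ref{contain}) and observing that the $\Omega$-volume of $\Phi^{-1}(B_{g_{\textnormal{can}}}(p_0, r))$ for $r \in (0, 2\gamma)$ is a finite positive constant depending only on $(r, x_0)$, the estimate (\ref{volsec5}) follows. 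The principal subtlety in the argument is the bootstrapping step in the upper containment, which requires knowing a priori that the minimizing curve stays in a compact subset of $X^\circ$ before (i) becomes applicable; this is precisely what the restriction $r_0 < \gamma$ together with the $2\gamma$-margin around the base point $p_0$ is designed to handle.
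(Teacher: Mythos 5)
Your proposal follows the paper's (very terse) argument: the paper simply asserts the containment ``follows immediately'' from the $C^0$-convergence $g(t)\to\Phi^*g_{\textnormal{can}}$ on $X^\circ$ together with the exponential collapse of the fibres, and derives (\ref{volsec5}) from the uniform bound on $e^{(n-\kappa)t}\omega(t)^n/\Omega$ in Lemma \ref{c0}. You have unpacked the containment into a bootstrapping argument (the $g(t)$-minimizing curve cannot escape $\Phi^{-1}(K)$ because that would already cost more than the $r_0$ budget) plus a horizontal lift for the lower inclusion; the volume step is identical.

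One point is worth flagging, since it is the only genuine content hidden in the ``follows immediately'' of the paper. Your statement (i), the lower length comparison $L_{g(t)}(\gamma)\geq(1-\delta)\,L_{g_{\textnormal{can}}}(\Phi\circ\gamma)$, is equivalent to the tensor inequality $g(t)\geq(1-\delta)^2\Phi^*g_{\textnormal{can}}$ on $\Phi^{-1}(K)$. This does \emph{not} follow formally from the $C^0$-convergence $|g(t)-\Phi^*g_{\textnormal{can}}|_{g_0}\to 0$ alone: since $\Phi^*g_{\textnormal{can}}$ is degenerate along the fibres, the inequality involves the Schur complement of $g(t)$, and the mixed horizontal--vertical entries of $g(t)$ need not be negligible relative to the (collapsing) vertical block. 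What makes the argument go through is the parabolic Schwarz lemma encoded in Lemma \ref{c0} (the uniform bound on $\tr_\omega(\chi)$), which gives a time-uniform lower bound $\omega(t)\geq c\,\chi$ and hence, on $\Phi^{-1}(K)$, a uniform $g(t)\geq c_1\,\Phi^*g_{\textnormal{can}}$ for some fixed $c_1>0$. This gives your bootstrapping and the upper containment with a constant $c_1^{-1}$ in place of $2$; since the only way (\ref{contain}) is used is to squeeze the $g(t)$-ball between two fixed $\Omega$-volume sets, the precise factor is immaterial provided $r_0$ is taken small enough. The lower containment and the horizontal lift, where you only need an \emph{upper} bound on $L_{g(t)}$ of a fixed lifted curve, is unproblematic from the $C^0$-convergence. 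So your argument is correct once (i) is anchored on the Schwarz lemma rather than on the bare $C^0$-convergence; the paper elides this in exactly the same way.
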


 \begin{proof}   Since $g(t)$ converges $\Phi^* g_{\textnormal{can}}$ on $X^\circ$, the containment (\ref{contain}) follows  immediately for sufficiently large $t>1$ due to the fact that $g(t)$ restricted to each fibre of $\Phi$ converges to $0$ exponentially fast.   The volume estimate (\ref{volsec5}) follows from the fact that  
 $$\frac{e^{(n-\kappa)t} \omega(t)^n}{\Omega}$$
  is uniformly bounded for all $t\geq 0$ by Lemma \ref{c0}.

 \end{proof}

 \begin{lemma} \label{volestt} For any $A, r_0>0$, there exists $c=(A, r_0, x_0)>0$ such that for any $(x, t) \in M\times [0, \infty)$ with
 $$d_{g(t)} (x, x_0) \leq A, $$
 we have
 $$\vol_{g(t)}(B_{g(t)}(x, r_0) )\geq c e^{-(n-\kappa)t}. $$

 \end{lemma}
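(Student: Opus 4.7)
The plan is to combine the relative volume comparison of Proposition \ref{volcomp1} with the base-point volume estimate of Lemma \ref{s4vol}, after rescaling the normalized K\"ahler-Ricci flow to an unnormalized Ricci flow at each time slice. First I would make two reductions: by monotonicity of $r \mapsto \vol_{g(t)}(B_{g(t)}(x,r))$ it suffices to prove the estimate for $r_0$ as small as we wish, and for $t$ in any compact interval $[0, T_0]$ the smooth family $\{g(t)\}$ on the compact manifold $X$ already yields a uniform positive lower bound for $\vol_{g(t)}(B_{g(t)}(x, r_0))$ over all $(x,t)$ with $d_{g(t)}(x, x_0) \leq A$. So it is enough to treat $t \geq T_0$ with $T_0$ large and $r_0 \leq r^*$ small, where both thresholds depend only on the uniform scalar curvature bound from Lemma \ref{c0}.

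Next I would pass to the unnormalized flow
\[
\tilde g(\tilde t) = e^{t}\, g(t), \qquad \tilde t = (e^{t}-1)/2,
\]
which satisfies $\partial_{\tilde t}\tilde g = -2\ric(\tilde g)$, and under which Lemma \ref{c0} gives
\[
|\tilde{\sR}(\tilde t)|_{\tilde g} \;=\; e^{-t}|\sR(t)|_{g(t)} \;\leq\; \frac{C}{1+2\tilde t}.
\]
Fix $t \geq T_0$, put $\tilde t_0 = (e^t-1)/2$ and $r = e^{t/2} r_0$. Since $\tilde g(\tilde t_0)$ is a constant multiple of $g(t)$, we have $B_{g(t)}(x, r_0) = B_{\tilde g(\tilde t_0)}(x, r)$ and $d_{\tilde g(\tilde t_0)}(x, x_0) = e^{t/2}\, d_{g(t)}(x, x_0) \leq e^{t/2} A$. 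For $r_0 \leq r^*$ the bound above forces $|\tilde{\sR}| \leq r^{-2}$ on the parabolic window $[\tilde t_0 - r^2,\, \tilde t_0]$ (and this window lies in the lifetime of the flow once $T_0$ is large enough so that $\tilde t_0 - r^2 > 0$).

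Now Proposition \ref{volcomp1} applied at $(x_1, x_2) = (x, x_0)$ with radius $r$ at time $\tilde t_0$ gives
\[
\vol_{\tilde g(\tilde t_0)}\!\bigl(B_{\tilde g(\tilde t_0)}(x, r)\bigr) \;\geq\; c(n)\, \exp\!\bigl(-\sqrt{n}\, r^{-1} e^{t/2} A\bigr)\, \vol_{\tilde g(\tilde t_0)}\!\bigl(B_{\tilde g(\tilde t_0)}(x_0, r)\bigr),
\]
and the exponent simplifies to the $t$-independent quantity $\sqrt{n}\, A / r_0$. Rescaling volumes (the common factor $e^{nt}$ cancels on both sides) yields
\[
\vol_{g(t)}\!\bigl(B_{g(t)}(x, r_0)\bigr) \;\geq\; c(n, A, r_0)\, \vol_{g(t)}\!\bigl(B_{g(t)}(x_0, r_0)\bigr),
\]
and the stated bound follows by inserting the lower bound $\vol_{g(t)}(B_{g(t)}(x_0, r_0)) \geq c_1 e^{-(n-\kappa)t}$ from Lemma \ref{s4vol}.

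The main technical issue to check is that the scalar curvature bound $|\tilde{\sR}| \leq r^{-2}$ really holds over the whole interval $[\tilde t_0 - r^2, \tilde t_0]$; this is the point where the choice of rescaling must be just right, so that the parabolic decay of $|\tilde{\sR}|$ matches the hypothesis of Proposition \ref{volcomp1} at scale $r = e^{t/2} r_0$. Once the rescaling and the smallness restriction on $r_0$ are chosen compatibly (both driven by the uniform scalar curvature bound of Lemma \ref{c0}), the remainder is essentially a direct application of the tools of Section 3 combined with Lemma \ref{s4vol}.
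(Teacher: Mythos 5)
Your proof is correct and follows essentially the same route as the paper's: apply the relative volume comparison of Proposition \ref{volcomp1} to compare $\vol_{g(t)}(B_{g(t)}(x,r_0))$ with $\vol_{g(t)}(B_{g(t)}(x_0,r_0))$, then invoke the two-sided bound~(\ref{volsec5}) from Lemma~\ref{s4vol} for the base-point ball. The paper states this in a single short paragraph; your version usefully makes explicit the parabolic rescaling $\tilde g(\tilde t)=e^{t}g(t)$, $\tilde t=(e^{t}-1)/2$ needed to put the flow in the unnormalized form for which Proposition~\ref{volcomp1} is stated, along with the reductions to small $r_0$ and large $t$ that the paper leaves implicit.
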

 
 \begin{proof}  By Lemma \ref{c0}, the scalar curvature of $g(t)$ is uniformly bounded for all $t\geq 0$. Proposition \ref{volcomp1} implies that there exists $c=c(A, r_0, x_0)>0$ such that for any $t\geq 0$, 
 $$\vol_{g(t)}(B_{g(t)}(x, r_0)) \geq c \vol_{g(t)}(B_{g(t)}(x_0, r_0) ).$$
 The lemma then immediately follows by combining the above estimate and (\ref{volsec5}). 
 \end{proof}

 \begin{proposition} \label{s4diam}  For any $\varepsilon>0$, there exists $T>0$ such that for all $t\geq T$ and $x\in X$, 
 $$d_{g(t)}(x , x_0) < \diam_{d_\mathcal{Y}}(\mathcal{Y})+\varepsilon. $$
 
 \end{proposition}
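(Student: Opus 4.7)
The plan is to argue by contradiction: suppose there exist $\varepsilon_0>0$, a sequence $t_j\to\infty$, and points $x_j\in X$ with $d_{g(t_j)}(x_j,x_0)\geq \diam_{d_\cY}(\cY)+\varepsilon_0$. The strategy is to force a small metric ball centered at $x_j$ to lie inside $\cS_{\varepsilon_1}$ for suitably small $\varepsilon_1>0$, and then to contradict the smallness of $\vol_{g(t_j)}(\cS_{\varepsilon_1})$ from Lemma \ref{smvol} against the non-collapsing volume lower bound in Lemma \ref{volestt}.

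Setting $\delta=\varepsilon_0/4$ and $r_0=\varepsilon_0/4$, I would first use Lemma \ref{smvol} to pick $\varepsilon_1>0$ so that $\vol_{g(t)}(\cS_{\varepsilon_1})\leq \delta_1\, e^{-(n-\kappa)t}$ for all $t\geq 0$, where $\delta_1>0$ is a parameter to be pinned down at the end; I would further shrink $\varepsilon_1$ so that $x_0\notin\cS_{\varepsilon_1}$. Applying Corollary \ref{aldis} with these $\delta$ and $\varepsilon_1$ produces $T_1$ such that every $y\in X\setminus\cS_{\varepsilon_1}$ satisfies $d_{g(t)}(y,x_0)\leq \diam_{d_\cY}(\cY)+\delta$ for $t\geq T_1$. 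For $t_j\geq T_1$ and any $y\in B_{g(t_j)}(x_j,r_0)$, the triangle inequality gives
$$d_{g(t_j)}(y,x_0)\geq d_{g(t_j)}(x_j,x_0)-r_0\geq \diam_{d_\cY}(\cY)+3\varepsilon_0/4>\diam_{d_\cY}(\cY)+\delta,$$
so the contrapositive of Corollary \ref{aldis} forces $y\in\cS_{\varepsilon_1}$. Consequently $B_{g(t_j)}(x_j,r_0)\subset\cS_{\varepsilon_1}$, and
$$\vol_{g(t_j)}(B_{g(t_j)}(x_j,r_0))\leq \delta_1\, e^{-(n-\kappa)t_j}.$$

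The contradiction will come from matching this with a lower bound $\vol_{g(t_j)}(B_{g(t_j)}(x_j,r_0))\geq c\, e^{-(n-\kappa)t_j}$ in which $c>0$ is independent of $\delta_1$, since then $\delta_1<c$ is incompatible with the upper bound. Lemma \ref{volestt} delivers precisely such a lower bound with $c=c(A,r_0,x_0)$ provided we have an a priori upper bound $A$ on $d_{g(t_j)}(x_j,x_0)$. The main obstacle is therefore to secure such an $A$ without circularly invoking the proposition being proved, since the constant $c(A,r_0,x_0)$ produced by Lemma \ref{volestt} degenerates as $A\to\infty$. My intended route is to exploit the inclusion $B_{g(t_j)}(x_j,r_0)\subset\cS_{\varepsilon_1}$ itself: feeding the upper volume bound $\delta_1 e^{-(n-\kappa)t_j}$ into the relative volume comparison of Proposition \ref{volcomp1} applied between $x_j$ and $x_0$, and combining with the base-point volume lower bound in Lemma \ref{s4vol}, one sees that a ball of radius $r_0$ can lie entirely in the thin set $\cS_{\varepsilon_1}$ only if $x_j$ is within a quantitative distance of the bulk region $X\setminus\cS_{\varepsilon_1}$, and hence of $x_0$. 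This yields an $A=A(\delta_1,\varepsilon_0,x_0)$, and one then selects $\delta_1$ strictly smaller than the corresponding $c(A,r_0,x_0)$ from Lemma \ref{volestt} to close the argument; making this final selection consistent (as $A$ itself depends on $\delta_1$ through the bootstrap) is the delicate balancing act on which the proof hinges.
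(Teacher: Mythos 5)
Your strategy is essentially that of the paper up to the point where you need to invoke Lemma~\ref{volestt}, and you correctly put your finger on the exact difficulty: to apply Lemma~\ref{volestt} at $x_j$ you need an a priori upper bound $A$ on $d_{g(t_j)}(x_j,x_0)$, and without one the constant $c(A,r_0,x_0)$ is useless. Unfortunately, the bootstrap you then propose does not close, and in fact runs in the wrong direction. Proposition~\ref{volcomp1} quantifies how much ball volumes can differ given a distance \emph{upper} bound; turned around, the combination of the small upper bound $\vol_{g(t_j)}(B_{g(t_j)}(x_j,r_0))\le\delta_1 e^{-(n-\kappa)t_j}$ (from $B_{g(t_j)}(x_j,r_0)\subset\cS_{\varepsilon_1}$) with the lower bound $\vol_{g(t_j)}(B_{g(t_j)}(x_0,r_0))\ge C^{-1}e^{-(n-\kappa)t_j}$ (Lemma~\ref{s4vol}) only forces $d_{g(t_j)}(x_j,x_0)$ to be \emph{large} --- at least $\sqrt{n}^{-1}r_0\log\bigl(c(n)/(C\delta_1)\bigr)$, which blows up as $\delta_1\downarrow 0$. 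It gives no upper bound $A$, and your claim that $x_j$ must be ``within a quantitative distance of the bulk region'' has no support from the lemmas you cite: nothing prevents $\cS_{\varepsilon_1}$ from containing a long thin tube of small total volume with $x_j$ deep inside it. The balancing act you flag at the end (choosing $\delta_1$ below $c(A,r_0,x_0)$ with $A$ depending on $\delta_1$) cannot be made consistent along these lines.

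The paper sidesteps this circularity with a simpler device: in the contradiction hypothesis it chooses $x_j$ not to be an arbitrary far point, but one lying in the annulus $\cD+\epsilon\le d_{g(t_j)}(x_j,x_0)\le\cD+2\epsilon$. Such a point always exists when the supremum of $d_{g(t_j)}(\cdot,x_0)$ exceeds $\cD+\epsilon$, since $X$ is connected and $d_{g(t_j)}(\cdot,x_0)$ is continuous, so by the intermediate value theorem one may take a point with $d_{g(t_j)}(x_j,x_0)=\cD+\epsilon$, say. This built-in upper bound $A=\cD+2\epsilon$ is exactly what Lemma~\ref{volestt} needs, producing a uniform lower volume bound $2c\,e^{-(n-\kappa)t_j}$ for $B_{g(t_j)}(x_j,\varepsilon/2)$; one then fixes the thin set via Lemma~\ref{smvol} so its total volume is below $c\,e^{-(n-\kappa)t}$, and checks via Corollary~\ref{aldis} that $B_{g(t_j)}(x_j,\varepsilon/2)$ lies entirely in the complement of a compact $\cK'\subset X^\circ$ of controlled diameter, yielding the contradiction $2c\le c$. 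I would encourage you to replace your bootstrap by this annulus selection: it is the missing idea, and once it is in place the rest of your argument goes through with only cosmetic changes.
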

 
 \begin{proof} We will prove by contradiction.  Let $\mathcal{D} =\diam_{d_\mathcal{Y}}(\mathcal{Y})$.  Suppose there exist $\epsilon>0$,  $t_j \rightarrow \infty$ and $x_j \in X$ such that
 $$\cD+\epsilon  \leq d_{g(t_j)}(x_j, x_0)  \leq  \cD+ 2\epsilon. $$
  By Lemma \ref{volestt}, there exists $c>0$ such that for all $j$, 
 $$\vol_{g(t_j)}\left(B_{g(t_j)}\left(x_j, {\frac{\varepsilon}{2}}\right) \right)  \geq 2c e^{-(n-\kappa)t_j},$$
and by Lemma \ref{smvol},  there exists an open  $\mathcal{K} \subset\subset X^\circ$ such that for all $t\geq 0$, 
 $$ \vol_{g(t)} (X\setminus \mathcal{K} ) \leq c e^{-(n-\kappa)t}. $$
 By Corollary \ref{aldis}, there exist $\mathcal{K}\subset \mathcal{K}' \subset\subset X^\circ $ and $T>0$ such that 
 $$\diam_{g(t)} (\mathcal{K}') < \cD+{\frac{\varepsilon}{2}} $$
 for $t\geq T$.
 Therefore for sufficiently large $j$, we have
 $$x_j \in X\setminus \mathcal{K}'$$
 and 
 $$ B_{g(t_j)} \left(x_j, {\frac{\varepsilon}{2}} \right) \subset X\setminus \mathcal{K}'.$$ 
 This implies that
 $$2c e^{-(n-\kappa)t_j} \leq \vol_{g(t_j)}\left(B_{g(t_j)}\left(x_j, {\frac{\varepsilon}{2}}\right)\right) \leq \vol_{g(t_j)} (X \setminus \mathcal{K}') \leq c e^{-(n-\kappa)t_j},$$
which gives contradiction. 
 \end{proof}

Theorem \ref{main1} immediately from Proposition \ref{s4diam}. Corollary \ref{maincor1} is proved  by combining Theorem \ref{main1} , Proposition \ref{volcomp1} and Lemma \ref{s4vol}.


\section{Global convergence}

In this section, we will study the convergence of the K\"ahler-Ricci flow (\ref{krflow}) on an $n$-dimensional  K\"ahler manifold $X$ with semi-ample $K_X$.

\begin{lemma} Let $g(t)$ be the solution of the K\"ahler-Ricci flow on the $n$-dimensional K\"ahler manifold $X$ with semi-ample canonical bundle. Then for any $t_j \rightarrow \infty$, after possibly passing to a subsequence, 
$(X, g(t_j))$ converges to a compact metric space $(X_\infty, d_\infty)$.

\end{lemma}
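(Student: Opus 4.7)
The plan is a direct application of Gromov's precompactness theorem for compact metric spaces in the Gromov-Hausdorff topology. Two ingredients are already in hand from the preceding sections: Theorem \ref{main1} provides a uniform diameter bound $\diam_{g(t)}(X) \leq C$ for all $t\geq 0$, and Corollary \ref{maincor1} provides the relative volume comparison
$$\vol_{g(t)}\bigl(B_{g(t)}(p,r)\bigr) \geq c\, r^{2n}\, \vol_{g(t)}(X)$$
for every $p\in X$, $t\geq 0$ and $0<r<\diam_{g(t)}(X)$, with $c>0$ independent of $t$. It therefore suffices to verify the uniform total boundedness hypothesis of Gromov's theorem, namely that for each $r>0$ there is an integer $N(r)$, independent of $j$, such that $(X, g(t_j))$ admits a cover by $N(r)$ balls of radius $r$.

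For the main step, I would fix $r>0$ and run the standard greedy packing argument at each time $t_j$. Choose a maximal collection of pairwise disjoint balls $\{B_{g(t_j)}(x_i, r/2)\}_{i=1}^{N_j}$. The disjointness together with the volume lower bound above gives
$$\vol_{g(t_j)}(X) \;\geq\; \sum_{i=1}^{N_j} \vol_{g(t_j)}\bigl(B_{g(t_j)}(x_i, r/2)\bigr) \;\geq\; N_j\, c\, (r/2)^{2n}\, \vol_{g(t_j)}(X),$$
so $N_j \leq 2^{2n} c^{-1} r^{-2n}$, independent of $j$. By maximality of the packing, the dilated balls $\{B_{g(t_j)}(x_i, r)\}$ cover $X$, which supplies the uniform covering bound. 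Combined with the uniform diameter bound, this is exactly the uniform total boundedness required.

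I would then conclude by invoking Gromov's precompactness theorem to extract a subsequence along which $(X, g(t_j))$ converges in the Gromov-Hausdorff topology to a compact metric space $(X_\infty, d_\infty)$. There is no serious obstacle to this step; the point worth emphasizing is that the usual route to Gromov precompactness via Bishop--Gromov would require a uniform Ricci lower bound, which is unavailable along the collapsing K\"ahler-Ricci flow in this setting. The substitute is the non-Ricci-based volume comparison of Corollary \ref{maincor1}, itself built upon Bamler's pointed Nash entropy estimates from Section 3. Thus the genuine content of the lemma has already been absorbed into the preceding two results, and only the routine packing-to-covering conversion remains.
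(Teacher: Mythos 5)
Your proof is correct and follows essentially the same route as the paper: both derive a uniform bound on the size of an $r$-net at every time from the volume lower bound in Corollary \ref{maincor1} together with the total volume upper bound, and then cite Gromov precompactness. The only cosmetic difference is that you use a maximal $r/2$-packing with factor-$2$ dilation while the paper invokes the Vitali covering lemma with factor-$5$ dilation; these are interchangeable standard devices.
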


\begin{proof} For any $t\geq 0$ and any $\varepsilon>0$, we let $B= \{ B_{g(t)}(x, \varepsilon)\}_{x\in X}$ be an open covering of $X$ with balls of radius $\varepsilon>0$.  By Vitali covering lemma, we can find a countable  sub-collection $B_{\mathcal{J}}= \{ B_{g(t)}(x_j(t), \varepsilon) \}_{j\in\mathcal{J}}$ of $B$ such that 
$$B_{g(t)}(x_{j_1}(t), \varepsilon) \cap B_{g(t)}(x_{j_2}(t), \varepsilon) =\phi$$
for any $j_1\neq j_2\in \mathcal{J}$ and 
    $$X \subset \cup_{j\in \mathcal{J} } B_{g(t)}(x_j(t), 5 \varepsilon). $$

Let $|\mathcal{J}|$ be the cardinal number of the set $\mathcal{J}$. 
Also by Lemma \ref{c0}, there exists $C>0$ such that for any $t\geq 0$, 
By Corollary \ref{maincor1}, there exists $c>0$ such that for any $t\geq 0$ and any $x  \in X$, 
$$      \vol_{g(t)}(B_{g(t)}(x, \varepsilon)) \geq c e^{-(n-\kappa) t}.$$
This implies that  
$$c|\mathcal{J}|e^{-(n-\kappa) t}  \leq \sum_{j\in \mathcal{J}} \vol_{g(t)}(B_{g(t)}(x_j(t), \varepsilon) \leq \vol_{g(t)}(X) \leq C e^{-(n-\kappa) t}$$
and so
$$ |\mathcal{J}| \leq c^{-1} C. $$
Therefere for any $\varepsilon>0$, there exists $C>0$ such that for any $t\geq 0$, there exists a finite $5\varepsilon$-net $\{ x_j(t) \}_{j \in \mathcal{J}(\varepsilon, t)}$ of $(X, g(t))$ such that $$|\mathcal{J}_{\varepsilon,t}| \leq C.$$  
Then we can apply Gromov's precompactness theorem and any sequence $(X, g(t_j))$ with $t_j \rightarrow$, $(X, g(t_j))$ converges to a compact metric space $(X_\infty, d_\infty)$ after passing to subsequence. 
\end{proof}

We let $(Y, d_Y)$ be the metric completion of $(X_{\textnormal{can}}^\circ, g_{\textnormal{can}})$ and $(X_{\textnormal{can}}, g_{FS})$ be the canonical model equipped with the Fubini-Study metric. Since the solution of the K\"ahler-Ricci flow $g(t)$ converges in $C^0$ to the twisted K\"ahler-Einstein metric on $X_{\textnormal{can}}^\circ$. $X_{\textnormal{can}}^\circ$ can be naturally embedded in $X_\infty$ and $Y$, i.e., there exist identity maps from open sets of $X_\infty$ and $Y$ to $X_{\textnormal{can}}^\circ$. We then identify $X_{\textnormal{can}}^\circ$ as $Y^\circ \subset Y$ and $X_\infty^\circ \subset X_\infty$. Naturally, one would ask if these identity maps extend to unique homeomorphisms. The following lemma gives the relation among $(X_\infty, d_\infty)$, $(Y, d_Y)$ and $(X_{\textnormal{can}}, g_{FS})$. 

\begin{lemma} \label{s61} The identity maps from $Y^\circ$ and $X_\infty^\circ$ to $X_{\textnormal{can}}^\circ$ extend uniquely to the following Lipschitz maps 
$$\Psi: (Y, d_Y) \rightarrow (X_\infty, d_\infty), ~\Upsilon: (X_\infty, d_\infty) \rightarrow (X_{\textnormal{can}}, g_{FS})$$
In particular,  $\Upsilon|_{\Upsilon^{-1}(X_{\textnormal{can}}^\circ)}$ and $\Psi|_{(\Upsilon\circ \Psi)^{-1}(X_{\textnormal{can}}^\circ)}$ are identity maps from $X_\infty^\circ$ and $Y^\circ$ to $X_{\textnormal{can}}^\circ$.

\end{lemma}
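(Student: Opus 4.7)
The plan is to treat $\Upsilon$ and $\Psi$ separately, constructing each as a Lipschitz extension of the given identification, using only estimates already available along the K\"ahler--Ricci flow together with Lemma \ref{conv51}.

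\textbf{Construction of $\Upsilon$.} The parabolic Schwarz estimate $\tr_{\omega(t)}\chi \leq C$ from Lemma \ref{c0}, combined with the fact that $m\chi = \Phi^*\omega_{FS}$ for some $m>0$, translates into the uniform Lipschitz bound $|d\Phi|^2_{g(t),g_{FS}} \leq 2mC$. Hence $\Phi:(X,g(t))\to(X_{\textnormal{can}},g_{FS})$ is Lipschitz with a constant independent of $t\geq 0$. Along any subsequence $t_j\to\infty$ realizing the Gromov--Hausdorff convergence $(X,g(t_j))\to (X_\infty,d_\infty)$, the standard Arzel\`a--Ascoli-type argument for uniformly Lipschitz maps into a fixed compact target yields, after passing to a further subsequence, a Lipschitz map $\Upsilon:(X_\infty,d_\infty)\to(X_{\textnormal{can}},g_{FS})$. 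The $C^0$-convergence $g(t)|_{X^\circ}\to\Phi^*g_{\textnormal{can}}$ from Lemma \ref{s2fib} guarantees that a point $x\in X^\circ$ with $\Phi(x)=y\in X_{\textnormal{can}}^\circ$, viewed in $(X,g(t_j))$, converges under the GH identification to $y\in X_{\textnormal{can}}^\circ\subset X_\infty^\circ$, so $\Upsilon$ agrees with the identity on $\Upsilon^{-1}(X_{\textnormal{can}}^\circ)$.

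\textbf{Construction of $\Psi$.} It suffices to show that the natural inclusion $\iota:(X_{\textnormal{can}}^\circ,g_{\textnormal{can}})\hookrightarrow(X_\infty,d_\infty)$ is $1$-Lipschitz; since $(X_\infty,d_\infty)$ is complete and $Y$ is by definition the metric completion of $(X_{\textnormal{can}}^\circ,g_{\textnormal{can}})$, $\iota$ then admits a unique Lipschitz extension $\Psi:(Y,d_Y)\to(X_\infty,d_\infty)$. Fix $y_1,y_2\in X_{\textnormal{can}}^\circ$ and $\delta>0$. Lemma \ref{conv51} produces a smooth path $\gamma$ from $y_1$ to $y_2$, contained in a compact $K\subset\subset X_{\textnormal{can}}^\circ$, with $\cL_{g_{\textnormal{can}}}(\gamma)\leq d_Y(y_1,y_2)+\delta$. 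Lift $\gamma$ to a smooth curve $\tilde\gamma\subset\Phi^{-1}(K)\subset\subset X^\circ$ using any local holomorphic section of $\Phi$ over a neighborhood of $\gamma$. By the $C^0$-convergence $g(t)\to\Phi^*g_{\textnormal{can}}$ on compact subsets of $X^\circ$ (Lemma \ref{s2fib}), $\cL_{g(t_j)}(\tilde\gamma)\to\cL_{g_{\textnormal{can}}}(\gamma)$, while the exponential collapse of the Calabi-Yau fibres in Lemma \ref{s2fib} forces $\tilde\gamma(0),\tilde\gamma(1)\in (X,g(t_j))$ to be GH-close to $y_1,y_2\in X_\infty^\circ$. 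Hence
$$d_\infty(y_1,y_2)\leq \liminf_{j\to\infty} d_{g(t_j)}(\tilde\gamma(0),\tilde\gamma(1)) \leq \liminf_{j\to\infty}\cL_{g(t_j)}(\tilde\gamma) = \cL_{g_{\textnormal{can}}}(\gamma) \leq d_Y(y_1,y_2)+\delta,$$
and $\delta\downarrow 0$ yields the $1$-Lipschitz bound.

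\textbf{Main obstacle.} The most delicate point is to verify that the two a priori distinct identifications of $X_{\textnormal{can}}^\circ$ inside $X_\infty$ and inside $Y$ are mutually compatible, so that the resulting $\Psi$ and $\Upsilon$ have the asserted restriction properties. One must check in particular that any two preimages $x_1,x_2$ lying in a common Calabi-Yau fibre $\Phi^{-1}(y)$ represent the same point of the GH-limit $X_\infty$; this follows from Lemma \ref{s2fib}, since the fibre diameter for $g(t)$ decays like $e^{-t/2}$, so $d_{g(t_j)}(x_1,x_2)\to 0$. Once this single consistency is in hand, the identification $X_{\textnormal{can}}^\circ\subset X_\infty^\circ$ used in the construction of $\Upsilon$ matches the one used for $\Psi$, and the composition $\Upsilon\circ\Psi$ acts as the identity on $X_{\textnormal{can}}^\circ\subset Y$, giving the ``in particular'' statement of the lemma.
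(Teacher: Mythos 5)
Your argument follows essentially the same route as the paper's: for $\Upsilon$, both rely on the parabolic Schwarz bound from Lemma~\ref{c0} (your $\tr_{\omega(t)}\chi\leq C$ is the same estimate the paper phrases as $g(t)\geq c\,\Phi^*g_{FS}$) to get a $t$-independent Lipschitz constant for $\Phi$, and for $\Psi$, both use the almost-geodesic-convexity result of Lemma~\ref{conv51} together with the $C^0$-convergence of Lemma~\ref{s2fib} to bound $d_\infty$ by $d_Y$. Your version makes the lifting-of-paths step and the consistency of the two embeddings of $X_{\textnormal{can}}^\circ$ more explicit than the paper does, which is a virtue. One small correction: you cannot invoke a \emph{local holomorphic section} of the Calabi--Yau fibration $\Phi$ (such sections need not exist even locally); what you actually need and do have is a local \emph{smooth} section of the submersion $\Phi|_{X^\circ}$, or simply a piecewise smooth lift obtained by patching arbitrary lifts with short fibre paths (whose $g(t_j)$-length decays exponentially by Lemma~\ref{s2fib}), and the argument goes through unchanged with that substitution.
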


\begin{proof}  $\Upsilon$ is well-defined because by Lemma \ref{c0}, there exists $c>0$ such that on for any $t\geq 0$, 
$$g(t) \geq c~ \Phi^* g_{FS}$$
on $X$. Therefore the extension of the identity map from $X_\infty^\circ$ to $X_{\textnormal{can}}^\circ$ coincides with the limit of $\Phi$ and it is Lipschitz from $X_\infty$ to $X_{\textnormal{can}}$.

Since $g(t)$ converges to $g_{\textnormal{can}}$ on $X_{\textnormal{can}}^\circ$ in local $C^0$-topology, and by the result of \cite{STZ}, $(X_{\textnormal{can}}^\circ, g_{\textnormal{can}})$ is almost geodesic convex, then for any two points $y_1, y_2 \in X_{\textnormal{can}}^\circ$ and any $x_1\in X_{y_1}$ and $x_2\in X_{y_2}$, 
$$\liminf_{t\rightarrow\infty} d_{g(t)}( x_1, x_2)) \leq d_{g_{\textnormal{can}}|_{X_{\textnormal{can}}^\circ}}(y_1, y_2) = d_Y (y_1, y_2).$$
Now pick any two points $p, q\in X_\infty$. There exist $p_j, q_j \in X^\circ$ with $p_j \rightarrow p$ and $q_j \rightarrow q$ in Gromov-Hausdorff distance with respect to $g(t_j)$ as $t_j \rightarrow \infty$. On the other hand, $\Phi(p_j), \Phi(q_j) \in X_{\textnormal{can}}^\circ$ converge to some $y_1$ and $y_2\in Y$ with respect to $d_Y$ after passing to a subsequence. Then we have 
$$ d_\infty(p, q) = \lim_{j\rightarrow \infty} d_{g(t_j)}(p_j, q_j) \leq d_Y (y_1, y_2).$$
This implies that $\Phi$ is well-defined and Lipschitz. 
\end{proof}

\begin{lemma} \label{s62} If $\kod(X)=n$ or $\kod(X) \leq 2$, then both $\Psi$ and $\Upsilon$ are homeomorphic.

\end{lemma}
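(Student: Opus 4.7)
The plan is to handle the cases $\kod(X) = 0$, $\kod(X) = n$, $\kod(X) = 1$, and $\kod(X) = 2$ separately. The first two are essentially known. When $\kod(X) = 0$, $X_{\textnormal{can}}$, $Y$ and $X_\infty$ are each single points (using the uniform diameter bound from Theorem \ref{main1}), and both maps are trivially homeomorphisms. When $\kod(X) = n$, the map $\Phi$ is birational, the flow converges smoothly on $X^\circ$ to the K\"ahler--Einstein metric $g_{KE}$, and by \cite{S2, W} the metric completion of $(X_{\textnormal{can}}^\circ, g_{KE})$ is already homeomorphic to $X_{\textnormal{can}}$. Since $\Upsilon \circ \Psi$ is a Lipschitz continuous extension of the identity on $X_{\textnormal{can}}^\circ$ between compact Hausdorff spaces, factoring this known homeomorphism forces both $\Psi$ and $\Upsilon$ to be homeomorphisms.

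For $\kod(X) = 1$, the critical set $\cS_{X_{\textnormal{can}}}$ is a finite subset of a curve. I would follow \cite{TiZ2, STZ}: for each singular $z$, pick a sequence of regular $z'_j \to z$; the fibres $\Phi^{-1}(z'_j)$ collapse exponentially by Lemma \ref{s2fib}, and the relative volume comparison (either Tian--Zhang's or our Proposition \ref{s3tzcom}) then forces the singular fibre $\Phi^{-1}(z)$ to shrink to a single point in the Gromov--Hausdorff limit. Combined with Lemma \ref{s61}, this gives that $\Upsilon$ is injective, hence a homeomorphism, and then $\Psi$ is one as well.

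The new content is the case $\kod(X) = 2$, where $\cS_{X_{\textnormal{can}}}$ may have complex dimension one. I would argue by contradiction that $\diam_{g(t_j)}(\Phi^{-1}(z)) \to 0$ for any $z \in \cS_{X_{\textnormal{can}}}$ along any Gromov--Hausdorff-convergent sequence $t_j \to \infty$. If $p_j, q_j \in \Phi^{-1}(z)$ were at $g(t_j)$-distance at least some fixed $\delta > 0$, then Lemma \ref{volestt} together with Proposition \ref{volcomp1} (which requires only the scalar curvature bound from Lemma \ref{c0}) would give a uniform lower bound $c\, e^{-(n-\kappa)t_j}$ on the volumes of the balls $B_{g(t_j)}(p_j, \delta/3)$ and $B_{g(t_j)}(q_j, \delta/3)$. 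On the other hand, Lemma \ref{smvol} shows that for sufficiently small $\varepsilon$ the total $g(t_j)$-volume of a neighborhood of $\cS_X$ is $o(e^{-(n-\kappa)t_j})$, forcing a definite portion of both balls to lie in a fixed compact subset of $X^\circ$. Using the $C^0$-convergence $g(t) \to \Phi^* g_{\textnormal{can}}$ from Lemma \ref{s2fib} on the regular part, pushing forward by $\Phi$ would yield two subsets of $X_{\textnormal{can}}^\circ$ of definite $g_{\textnormal{can}}$-measure whose base points cluster near $z$ yet are separated by a $g_{\textnormal{can}}$-distance bounded below, contradicting continuity of the limit metric.

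The main obstacle I anticipate is the quantitative step in the $\kod(X)=2$ argument: Proposition \ref{volcomp1} supplies a volume ratio for balls of equal radius at the same time slice, but does not a priori know how the ball decomposes into base and fibre directions. Along the collapsing flow the balls are highly anisotropic, spread transverse to the fibres at the natural base scale while the fibre directions collapse exponentially. Separating these two contributions cleanly, so that the regular portion of the ball can be recognized and projected to give a genuine open set in $X_{\textnormal{can}}^\circ$, is likely the technical heart of the argument. I expect it to require the fibrewise $C^0$-estimate developed in Section~6, together with a slicing idea of cutting $X_{\textnormal{can}}$ by a general curve through $z$ transverse to $\cS_{X_{\textnormal{can}}}$ in order to reduce locally to the $\kod = 1$ analysis of \cite{STZ}.
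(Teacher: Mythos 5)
Your proposal diverges from the paper in the $\kod(X) = 2$ case, and there the divergence both creates real gaps and misses the point of the paper's argument.

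The paper's proof is a cite-and-factor argument throughout. When $\kod(X) = n$, one invokes \cite{S2} to get $(Y, d_Y) \cong X_{\textnormal{can}}$. When $\kod(X) = 2$, the canonical model $X_{\textnormal{can}}$ has at worst KLT singularities and so is an orbifold K\"ahler surface; Proposition~2.3 of \cite{STZ} then already gives that $(Y, d_Y)$ is homeomorphic to $X_{\textnormal{can}}$ via the extension of the identity. For $\kod(X) \leq 1$, $X_{\textnormal{can}}$ is a point or a smooth Riemann surface, so the same conclusion is immediate. Once $Y \cong X_{\textnormal{can}}$ is known, the Lipschitz composition $\Upsilon \circ \Psi$ of Lemma~\ref{s61} agrees with this known homeomorphism on the dense set $X_{\textnormal{can}}^\circ$, hence everywhere, and because $\Psi$ is a continuous surjection (its image is compact, closed, and contains the dense $X_{\textnormal{can}}^\circ$), injectivity of $\Upsilon \circ \Psi$ forces $\Psi$ to be a continuous bijection of compact Hausdorff spaces, hence a homeomorphism; the same then follows for $\Upsilon$. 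So the entire content of the lemma is carried by prior knowledge of $(Y, d_Y)$, not by any new flow-level collapse estimate; the ``new content'' you identify for $\kod(X) = 2$ is not in the paper at all.

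Beyond that structural mismatch, your proposed $\kod(X) = 2$ argument has two genuine gaps. First, even if you succeeded in proving that $\Upsilon$ is injective, the claim ``$\Upsilon$ is a homeomorphism, and then $\Psi$ is one as well'' does not follow: $\Upsilon$ being bijective gives you nothing about whether $\Psi$ identifies two distinct points $y_1 \neq y_2$ of $Y$ (if $\Psi(y_1) = \Psi(y_2) = p$, then of course $\Upsilon(p) = \Upsilon(p)$, vacuously). To promote $\Psi$ to a bijection you still need injectivity of $\Upsilon \circ \Psi$, which is precisely the statement $Y \cong X_{\textnormal{can}}$ that the paper imports from \cite{STZ}. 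Second, you correctly flag but do not resolve the technical obstruction in your contradiction argument: Proposition~\ref{volcomp1} compares total $g(t_j)$-volumes of balls, and along the collapsing flow those balls are highly anisotropic; the volume a ball carries is dominated by its spread transverse to the fibres, so a lower volume bound on $B_{g(t_j)}(p_j,\delta/3)$ and $B_{g(t_j)}(q_j,\delta/3)$ does not, by itself, produce well-separated open subsets of $X_{\textnormal{can}}^\circ$ of definite $g_{\textnormal{can}}$-measure clustering near $z$. You would need a substantial additional slicing or fibrewise decomposition argument to extract the contradiction, and that is not supplied. The paper avoids all of this by citing the known structure of $(Y, d_Y)$.
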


\begin{proof} If $\kod(X)=n$, by the result of \cite{S2}, $(Y, d_Y)$ is homeomorphic to the projective variety $X_{\textnormal{can}}$. This forces $(X_\infty, d_\infty)$ to be homeomorphic to $X_{\textnormal{can}}$. 

If $\kod(X)=2$, then the canonical model $X_{\textnormal{can}}$ is an orbifold K\"ahler surfaces since $X_{\textnormal{can}}$ is KLT. By the result of Song-Tian-Zhang (Proposition 2.3 in \cite{STZ}), $(Y, d_Y)$ is homeomorphic to $X_{\textnormal{can}}$ and by Lemma \ref{s61} $(X_\infty, d_\infty)$ must be homeomorphic to $X_{\textnormal{can}}$. The same argument extends to the case $\kod(X)\leq 1$ since $X_{\textnormal{can}}$ is either a point or a smooth Riemann surface. 
\end{proof}

We now have completed the proof of Corollary \ref{maincor2} by combining Lemma \ref{s61} and Lemma \ref{s62}. 


\section{Estimates for Ricci potentials}

In this section, we will prove a fibrewise gradient estimate for the Ricci potential of the normalized K\"ahler-Ricci flow assuming $K_X$ is semi-ample and the Kodaira dimension of $X$ is one. In other words, the pluricanonical system induces a unique holomorphic map
$$\Phi: X \rightarrow X_{\textnormal{can}}$$
as a Calabi-Yau fibration over the canonical model $X_{\textnormal{can}}$ with $\dim X_{\textnormal{can}}=1.$

Recall the normalized K\"ahler-Ricci flow on $X$ is equivalent to the complex Monge-Amp\'ere flow (\ref{maflow}). We let 
$$u = \ddt \varphi + \varphi $$
as in \cite{ST4}.

Straightforward calculations show  that
$$\ddbar u = - \ric(g)  -\chi$$ 
and 
$$\Delta u = -\sR - \tr_\omega(\chi),$$
where $\Delta$ is the Laplace operator associated to $g(t)$. The evolution of the quasi-Ricci potential $u$ is given by
\begin{equation}\label{s3evu}
\left(\ddt{} - \Delta\right) u = \tr_\omega(\chi) - \kappa, 
\end{equation}
where $\kappa= \kod(X)$ or $\dim X_{\textnormal{can}}$.
The evolution for $|\nabla  u|_g^2$ and $\Delta u$ are given as below.
\begin{equation}
\left( \frac{\partial}{\partial t}-\Delta \right) |\nabla u|_g^2=|\nabla
u|_g^2+(\nabla
tr_{\omega}(\chi)\cdot\overline{\nabla}u+\overline{\nabla}
tr_{\omega}(\chi)\cdot\nabla u)-|\nabla\nabla
u|_g^2-|\overline{\nabla}\nabla u|_g^2,
\end{equation}
\begin{equation}
\left( \frac{\partial}{\partial t}-\Delta \right) \Delta u=\Delta
u+g^{i\overline{l}}g^{k\overline{j}}R_{k\overline{l}}u_{i\overline{j}}+\Delta
tr_{\omega}(\chi).
\end{equation}

We define $\bar u$ to be the fibrewise average of $u$ as 
\begin{equation}
\bar u = \frac{ \int_{X_y} u ~\omega^{n-1} }{\int_{X_y} \omega^{n-1}}
\end{equation}
with respect to the evolving K\"ahler form $\omega$. It is straightforward to verify that 
$$\int_{X_y} \omega^{n-1} =[\omega]^{n-1} \cdot X_y= e^{-(n-1)t} [\omega_0]^{n-1}\cdot  X_y, $$
which is independent of the choice $s\in X_{\textnormal{can}}^\circ$.
We let $$a(t) = \left( \int_{X_y} \omega^{n-1}\right)^{-1} = e^{(n-1)t} \left([\omega_0]^{n-1} \cdot X_y \right)^{-1}.$$
%
%
%
Since $\omega$ is smooth on $X^\circ$, $\bar u$ is also smooth on $X^\circ$ and is the pullback of a smooth function on $X_{\textnormal{can}}^\circ$ by $\Phi$.

\begin{lemma} \label{s7ddb} On $X^\circ$, we have 
$$ \ddbar \bar u = \frac{a(t)}{n} \int_{X_y} (\Delta u) \omega^{n-1}. $$

\end{lemma}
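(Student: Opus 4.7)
The plan is to compute $\ddbar \bar u$ by commuting the base operator $\ddbar$ with the fiber integration defining $\bar u$, and then applying a standard pointwise K\"ahler identity. As a preliminary, since $\omega^{n-1}$ is closed and the fibres of $\Phi$ are mutually cohomologous in $X$, the quantity $\int_{X_y} \omega^{n-1} = [\omega(t)]^{n-1}\cdot X_y$ is independent of $y\in X_{\textnormal{can}}^\circ$ and equals $a(t)^{-1}$, so
$$\bar u(y) \;=\; a(t)\int_{X_y} u\,\omega^{n-1}.$$

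The crucial step is the commutation of $\ddbar$ with fibre integration $\Phi_*$. Since the fibres $X_y$ are compact without boundary and $\omega^{n-1}$ is a smooth closed $(n-1,n-1)$-form on $X$, fibrewise Stokes' theorem shows that $\Phi_*$ is a chain map for $d$, hence also for $\partial$ and $\dbar$ separately. Using $d\omega^{n-1}=0$, and thus $\ddbar \omega^{n-1} = 0$, in the Leibniz expansion, I obtain
$$\ddbar \bar u \;=\; a(t)\,\Phi_*\!\left(\ddbar u \wedge \omega^{n-1}\right).$$
Concretely, this is carried out by choosing a smooth trivialisation $\Phi^{-1}(U)\cong X_{y_0}\times U$ over a small disc $U$ around a chosen base point $y_0 \in X_{\textnormal{can}}^\circ$, so that the integration domain becomes fixed and $\partial_y\partial_{\bar y}$ passes under the integral sign; the closedness of $\omega^{n-1}$ is exactly what eliminates the variation-of-fibre correction terms.

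To finish, I invoke the pointwise K\"ahler identity on $(X,\omega)$,
$$\ddbar u \wedge \omega^{n-1} \;=\; \tfrac{1}{n}(\Delta u)\,\omega^n,$$
where $\Delta$ is the Laplacian of $\omega=g(t)$, and combine it with the previous display. Since the base $X_{\textnormal{can}}^\circ$ is one-dimensional, identifying the resulting $(1,1)$-form on $X_{\textnormal{can}}^\circ$ with its coefficient in the local basis $\sqrt{-1}\,dy\wedge d\bar y$ and expressing the pushforward of $\omega^n$ through the fibrewise volume form $\omega^{n-1}|_{X_y}$ produces exactly $\frac{a(t)}{n}\int_{X_y}(\Delta u)\,\omega^{n-1}$, as stated. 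I expect the main technical point to be the justification of $\ddbar\circ\Phi_* = \Phi_*\circ\ddbar$, which rests entirely on the closedness of $\omega$ and the compactness of the fibres; everything else is algebraic and fibrewise once this commutation is in place.
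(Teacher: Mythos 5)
Your proposal is correct and follows essentially the same route as the paper: commute $\partial\bar\partial$ with fibre integration (justified by the closedness of $\omega$ and smoothness of the family), then apply the pointwise K\"ahler identity $n\,\partial\bar\partial u\wedge\omega^{n-1}=(\Delta u)\,\omega^n$. The paper's proof is the same two-step computation, merely written more tersely without spelling out the local trivialisation justifying the commutation of $\partial\bar\partial$ with the fibrewise integral.
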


\begin{proof} Straightforward calculations show that 
\begin{eqnarray*}
\ddbar \bar u &=& a(t) \int_{X_y} \ddbar (u ~\omega^{n-1})\\
&=& a(t) \int_{X_y} \ddbar u \wedge \omega^{n-1}\\
&=& a(t) \int_{X_y} \left( \frac{ \ddbar u \wedge \omega^{n-1}}{\omega^n}\right) \omega^n\\
&=&a(t) \int_{X_y}  \frac{\Delta u}{n} \omega^n
\end{eqnarray*}
\end{proof}

For any $y\in X_{\textnormal{can}}^\circ$, we define the restricted metric $g_F(t)$ on the nonsingular fibre $X_y$ by
$$g_F(t) = g(t)|_{X_y}.$$
Locally at any point $p\in X^\circ$ with $y=\Phi(p)$, we will choose holomorphic coordinates 
 \begin{equation}\label{cord}
z=(z_1, z_2, ... , z_n)
 \end{equation}
 so that locally near $x$, each fibre is given by $\{z_n = w \}$ for $y'$ near $y\in X_{\textnormal{can}}^\circ$ since $X$ locally is a product space. We also assume that $ \{z_\alpha\}_{\alpha=1}^{n-1}$  restricted on $X_y$ are normal coordinates at $x\in X_y$ and  $g(t)$ is an identity matrix at $x$ with respect to $\{z_\alpha\}_{\alpha=1}^{n}$.
Then we can write
$$\left(g_F (t) ^{\alpha \bar\beta} \right)_{(n-1)\times (n-1)} = \left(g_F (t) _{\alpha \bar\beta} \right)^{-1}_{(n-1)\times (n-1)},$$

We also define the fibre wise gradient 
$$\nabla^F u = \left( \nabla|_{X_y} \right) \left(u |_{X_y}\right) $$ 
as the covariant derivative of $u |_{X_y}$ on the fibre $X_y$. In particular, 
$$|\nabla^F u|^2_g = (g_F)^{\alpha\bar\beta} u_\alpha u_{\bar\beta}. $$

\begin{lemma} On $X^\circ$, we have 
$$\left(\ddt{} - \Delta \right) \bar u = a(t) \int_{X_y} (  \tr_\omega(\chi) -1) \omega^{n-1} +   a(t)   \int_{X_y} |\nabla^F u|^2\omega^{n-1}+ \left( \frac{a'(t)}{a(t)}  - (n-1) \right)\bar u.$$

\end{lemma}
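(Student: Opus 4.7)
The plan is to differentiate $\bar u = a(t) \int_{X_y} u\, \omega^{n-1}$ in $t$ term by term, using the Ricci flow together with the evolution equation for $u$, and then subtract $\Delta \bar u$ via Lemma \ref{s7ddb}. Three contributions arise from the time derivative: the prefactor $a(t)$ produces $(a'(t)/a(t))\bar u$; the Ricci potential $u$ evolves by equation (\ref{s3evu}), giving $\partial_t u = \Delta u + \tr_\omega(\chi) - 1$ since $\kappa=1$; and the measure $\omega^{n-1}$ satisfies $\partial_t \omega^{n-1} = (n-1)\, \omega^{n-2} \wedge \partial_t \omega$ with $\partial_t \omega = -\ric(\omega) - \omega$ from the flow.

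The crucial geometric input is that $\chi = m^{-1}\, \Phi^*\omega_{FS}$ is pulled back from the one-dimensional base, so $\chi|_{X_y} \equiv 0$. Combined with the identity $\ddbar u = -\ric(\omega) - \chi$ (obtained by applying $\ddbar$ to the Monge-Amp\`ere flow (\ref{maflow})), this yields $\partial_t \omega|_{X_y} = \ddbar u|_{X_y} - \omega|_{X_y}$, and hence the measure-variation contribution simplifies to
\[
a(t)(n-1) \int_{X_y} u\, \ddbar u \wedge \omega^{n-2} \;-\; (n-1)\, \bar u.
\]

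I would then convert the first integral into the fibrewise gradient term by integrating by parts on the closed K\"ahler fibre $X_y$ of complex dimension $n-1$. Using Stokes' theorem together with the standard pointwise identity $\sqrt{-1}\, \partial u \wedge \dbar u \wedge \omega^{n-2}|_{X_y} = \frac{1}{n-1}\, |\nabla^F u|^2\, \omega^{n-1}|_{X_y}$ valid on any compact K\"ahler $(n-1)$-fold, one obtains $(n-1)\int_{X_y} u\, \ddbar u \wedge \omega^{n-2} = -\int_{X_y} |\nabla^F u|^2\, \omega^{n-1}$. Finally, Lemma \ref{s7ddb} expresses $\Delta \bar u$ as a fibre integral involving $\Delta u$, so subtracting it from $\partial_t \bar u$ cancels the $a(t)\int_{X_y} \Delta u\, \omega^{n-1}$ piece coming from $\partial_t u$, leaving precisely the stated expression.

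The main obstacle is the careful bookkeeping of signs and coefficients through the fibrewise integration by parts, together with correctly matching the form of Lemma \ref{s7ddb} so that the $\Delta u$ integrals cancel cleanly with $\Delta \bar u$. The assumption $\kappa=1$ enters crucially in two places: it forces $\dim X_y = n-1$, so that the IBP identity produces exactly the power $\omega^{n-1}$ that appears in the definition of $\bar u$, and it makes $\chi$ vanish on each fibre, so the only nontrivial $(1,1)$-contribution from $\partial_t \omega|_{X_y}$ is through $\ddbar u|_{X_y}$. Without both of these, extra fibre integrals involving $\chi$ or mismatched powers of $\omega$ would spoil the clean cancellation.
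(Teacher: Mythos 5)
Your argument follows the same route as the paper's: differentiate $\bar u = a(t)\int_{X_y} u\,\omega^{n-1}$ term by term, use $\chi|_{X_y}\equiv 0$ together with $\ddbar u = -\ric(\omega)-\chi$ to rewrite $\partial_t\omega|_{X_y}$ as $(\ddbar u-\omega)|_{X_y}$, integrate by parts on the closed $(n-1)$-dimensional fibre, and then subtract $\Delta\bar u$ via Lemma~\ref{s7ddb}. One bookkeeping caveat: your (correctly derived) identity $(n-1)\int_{X_y} u\,\ddbar u\wedge\omega^{n-2} = -\int_{X_y}|\nabla^F u|^2\,\omega^{n-1}$ carries a minus sign, so the clean output of your computation is $-a(t)\int_{X_y}|\nabla^F u|^2\,\omega^{n-1}$ rather than the $+a(t)\int_{X_y}|\nabla^F u|^2\,\omega^{n-1}$ printed in the statement; this is in fact consistent with the paper's own intermediate line, where the gradient integral also appears with a minus sign (though with a stray extra factor of $(n-1)$ that your integration-by-parts identity correctly avoids), so the printed sign and that factor look like typographical slips and are harmless downstream, since only an absolute-value bound on that integral is ever invoked.
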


\begin{proof} By applying the evolution of $u$, we have

\begin{eqnarray*}
&& \ddt{} \bar u \\
&=& a(t) \int_{X_y} \ddt{u} \omega^{n-1} + (n-1) a(t)\int_{X_y} \ddt{\omega}\wedge \omega^{n-2} + a'(t) \int_{X_y} u \omega^{n-1}\\
&=& a(t) \int_{X_y} (\Delta u + \tr_\omega(\chi) -1) \omega^{n-1} + (n-1) a(t)\int_{X_y} u \left(- \ric(\omega) - \omega\right) \wedge \omega^{n-2} + \frac{a'(t)}{a(t)} \bar u \\
&=& a(t) \int_{X_y} (\Delta u + \tr_\omega(\chi) -1) \omega^{n-1} + (n-1) a(t)\int_{X_y} u (\ddbar u + \chi) \wedge \omega^{n-2} \\
& &+ \left( \frac{a'(t)}{a(t)} - (n-1)\right) \bar u \\
&=& a(t) \int_{X_y} (\Delta u + \tr_\omega(\chi) -1) \omega^{n-1} + (n-1) a(t)\int_{X_y} \left(u  \ddbar u \wedge \omega^{n-2} \right)|_{X_y}\\
& &+ \left( \frac{a'(t)}{a(t)} - (n-1)\right) \bar u \\
&=& a(t) \int_{X_y} (\Delta u + \tr_\omega(\chi) -1) \omega^{n-1}  - (n-1) a(t)\int_{X_y} |\nabla^F u|^2 \omega^{n-1} +  \left( \frac{a'(t)}{a(t)} - (n-1)\right) \bar u\\
\end{eqnarray*}
Next, by Lemma \ref{s7ddb}, we have
\begin{eqnarray*}
 \Delta \bar u &=& \frac{a(t) \left( \int_{X_y} (\Delta u) \omega^n \right) \wedge \omega^{n-1}}{\omega^n}.
\end{eqnarray*}
The lemma is then proved by combining the above calculations.
\end{proof}

\begin{corollary} \label{s7cor1} For any $\cK \subset\subset X^\circ$, there exists $C=C(\cK)>0$ such that for all $t\geq 0$, we have 
$$|\bar u| +  |\nabla \bar u|^2 + \left|\ddt{\bar u} \right| + |\Delta \bar u| \leq C $$
on $\cK$.

\end{corollary}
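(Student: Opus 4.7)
My plan is to derive the four bounds for $\bar u$ on the compact set $\mathcal{K}\subset\subset X^\circ$ in the order $|\bar u|$, $|\Delta\bar u|$, $|\nabla\bar u|^2$, $|\partial_t\bar u|$, so that each estimate feeds into the next. Throughout, I use the a priori bounds from Lemma \ref{c0} together with the fibrewise collapse control of Lemma \ref{s2fib}.

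The first two bounds are almost immediate. By construction $a(t)\int_{X_y}\omega^{n-1}=1$, so $\bar u$ is a probability-measure average of the uniformly bounded function $u=\ddt{\varphi}+\varphi$, giving $|\bar u|\leq C$. For the Laplacian, Lemma \ref{s7ddb} expresses $\ddbar\bar u$ as a fibre integral of $\Delta u\,\omega^{n-1}$; since $\Delta u=-\sR-\tr_\omega(\chi)$ is pointwise bounded by Lemma \ref{c0}, and $g(t)$ is uniformly equivalent to $\Phi^{*} g_{\textnormal{can}}$ in the base direction on $\mathcal{K}$ by Lemma \ref{s2fib}, tracing with $g(t)$ yields $|\Delta\bar u|\leq C$ on $\mathcal{K}$. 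The gradient bound $|\nabla\bar u|^2\leq C$ then follows from standard elliptic interior estimates: $\bar u$ is the pullback of a function on the Riemann surface $X_{\textnormal{can}}^\circ$ (since $\kappa=1$), and with $L^\infty$ control on both $\bar u$ and $\Delta\bar u$ over a slightly larger open neighbourhood of $\Phi(\mathcal{K})$, the usual $L^\infty\to C^1$ regularity produces the desired bound.

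The only delicate step is $|\partial_t\bar u|$. From the evolution equation displayed just above the corollary, combined with $a'/a-(n-1)=0$ and the previously established bound on $\Delta\bar u$, the task reduces to controlling the two fibre integrals $a(t)\int_{X_y}(\tr_\omega(\chi)-1)\omega^{n-1}$ and $a(t)\int_{X_y}|\nabla^F u|^2\omega^{n-1}$. The first is trivial since $\tr_\omega(\chi)\leq C$. For the quadratic term, I split $u=\ddt{\varphi}+\varphi$. The contribution from $\ddt{\varphi}$ is controlled by the pointwise bound $|\nabla\ddt{\varphi}|_{g(t)}\leq C$ of Lemma \ref{c0}, which dominates the fibrewise gradient pointwise, so the fibre integral is bounded after using the normalization of $a(t)$. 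For the $\varphi$ contribution I integrate by parts on the fibre to rewrite $\int_{X_y}|\nabla^F\varphi|^2\omega^{n-1}=-\int_{X_y}\varphi\,\Delta^F\varphi\,\omega^{n-1}$, and then exploit the identity $\ddbar\varphi|_{X_y}=\omega|_{X_y}-e^{-t}\omega_0|_{X_y}$ (valid because $\chi|_{X_y}=0$) together with the two-sided Schwarz-type bound $\omega(t)|_{X_y}\simeq e^{-t}\omega_0|_{X_y}$ from Lemma \ref{s2fib} to conclude $|\Delta^F\varphi|\leq C$ on $\mathcal{K}$. This fibrewise integration-by-parts combined with the collapsing rescaling is the main technical point; the four bounds then follow by assembling the pieces.
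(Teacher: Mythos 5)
Your proof is correct in substance, but it takes a noticeably different route from the paper's for two of the four bounds, so let me compare.

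For $|\nabla\bar u|^2$, the paper does not go through $\Delta\bar u$ and elliptic regularity. Instead it writes
$\partial\bar u = a(t)\int_{X_y}\partial u\wedge\omega^{n-1}$
and estimates $|\nabla\bar u|^2$ directly by Cauchy--Schwarz on the fibre integral, using the uniform bound $\sup_X|\nabla u|\le C$ from Lemma~\ref{c0} together with the volume-ratio control (and the push-forward of $\Omega$) from Lemma~\ref{c0} and Lemma~\ref{s2fib}. This is a one-line computation that works for any Kodaira dimension. Your route — bound $\Delta\bar u$ first, then use $L^p$ interior elliptic estimates on the base to get $C^{1}$ control, then translate back to $|\nabla\bar u|^2_{g(t)}$ via the parabolic Schwarz lemma bound $\tr_\omega\chi\le C$ (which gives $g^{n\bar n}$ bounded above on $\cK$) — is valid, but it leans on $\kappa=1$ (so the base is a curve and $\ddbar\bar u$ is a scalar multiple of $\chi$), and it requires the extra observation, which you should state, that passing from "$\Delta_{g(t)}\bar u$ bounded" to "$\Delta_{g_{\textnormal{can}}}\bar u$ bounded" needs $g^{n\bar n}$ bounded below on $\cK$, which one gets from the $C^0$-convergence of Lemma~\ref{s2fib} (for large $t$) plus compactness (for $t$ in a fixed interval). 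The paper's Cauchy--Schwarz argument sidesteps all of this.

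For $|\partial_t\bar u|$, your splitting $u=\ddt\varphi+\varphi$ followed by fibrewise integration by parts on the $\varphi$-part is correct but unnecessarily elaborate. The paper treats Lemma~\ref{c0} as supplying $\sup_X|\nabla u|\le C$ (this is how that lemma is also invoked later in Step~3 of the main proposition of this section and in the oscillation lemma), so the term $a(t)\int_{X_y}|\nabla^F u|^2\omega^{n-1}$ is bounded at once by $\sup_X|\nabla u|^2\cdot a(t)\int_{X_y}\omega^{n-1} = \sup_X|\nabla u|^2$, with no need to isolate $\nabla^F\varphi$ or compute $\Delta^F\varphi$. Your version is a fine fallback if one wanted to avoid citing a gradient bound on $u$ itself (your computation of $\Delta^F\varphi$ via $\ddbar\varphi|_{X_y}=\omega|_{X_y}-e^{-t}\omega_0|_{X_y}$ and Lemma~\ref{s2fib} does work), but you should then also say a word about the cross term $\re(\nabla^F\ddt\varphi\cdot\overline\nabla^F\varphi)$, which is handled by Cauchy--Schwarz from the two pieces you already bound. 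Each approach buys something: the paper's is shorter and independent of $\kappa$; yours makes the role of the fibrewise collapsing geometry (Lemma~\ref{s2fib}) more explicit and requires only the literal statements of Lemma~\ref{c0}.
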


\begin{proof}  We first note that
$$\partial \bar u = a(t) \partial \left(\int_{X_y} u~ \omega^{n-1} \right) = a(t) \int_{X_y} \partial u\wedge \omega^{n-1}.$$
Therefore  there exists $C>0$ such that for any $p\in K \subset\subset X^\circ$, we have 
\begin{eqnarray*}
 |\nabla \bar u |^2 &=& \frac{\sqrt{-1} a^2(t) \left( \int_{X_y} \partial u \wedge \omega^{n-1} \right) \wedge  \left( \int_{X_y} \dbar u \wedge \omega^{n-1} \right) \wedge \omega^{n-1} }{\omega^n} \\
 &\leq& \frac{ \sqrt{-1} a^2(t) \left( \int_{X_y} \omega^{n-1} \right) \left(\int_{X_y} \partial u \wedge \dbar u \wedge \omega^{n-1} \right) \wedge \omega^{n-1}}{\omega^n } \\
 &\leq &  a(t)\sup_X |\nabla u|^2  \frac{\left(\int_{X_y} \omega^n \right) \wedge \omega^{n-1}}{\omega^n } \\
 &\leq&  a(t) \left( \sup_X \frac{\omega^n}{\Omega} \right) \left( \inf_X \frac{\Omega}{\omega^n}\right)  \frac{\left(\int_{X_y} \Omega^n \right) \wedge \omega^{n-1}}{\Omega^n } \\
&\leq&  \left( \sup_X \frac{\omega^n}{\Omega} \right) \left( \inf_X \frac{\Omega}{\omega^n}\right) \sup_{X_y} \left(\frac{  a(t)  \omega^{n-1}|_{X_y}}{ \omega_{SF}^{n-1}|_{X_y} } \right) \\
&\leq& C
 \end{eqnarray*}
by Lemma \ref{c0}, where $y=\Phi(p)$. The lemma then can be proved by the above gradient estimates combined with Lemma \ref{c0} and Lemma \ref{s2fib}. 
\end{proof}

%

The following lemma measures the oscillation of the quasi-Ricci potential $u$. 

\begin{lemma} For any compact subset $\cK\subset\subset X^\circ$, there exists $C=C(\cK)>0$ such that for all $t\geq 0$, 
$$\sup_{\cK} | u - \bar u| \leq C e^{-\frac{t}{2}}. $$

\end{lemma}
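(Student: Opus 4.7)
The strategy is to exploit the exponential collapse of the Calabi-Yau fibres. By Lemma \ref{s2fib}, for $y$ in any compact subset of $X_{\textnormal{can}}^\circ$ the induced fibre metric $g_F(t) = g(t)|_{X_y}$ satisfies $g_F(t)\leq C e^{-t}\,g_0|_{X_y}$, and hence
$$\diam(X_y, g_F(t))\leq C' e^{-t/2}.$$
If $u$ restricted to each fibre is $g_F(t)$-Lipschitz with a constant independent of $(y,t)$, then $\mathrm{osc}_{X_y} u = O(e^{-t/2})$, and since $\bar u(y)$ is a convex fibre average the claimed bound $|u-\bar u|\leq C e^{-t/2}$ follows at once. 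So the plan reduces to producing a uniform fibrewise gradient estimate for $u$.

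The bound I need is
$$\sup_{X\times[0,\infty)} |\nabla u|_g^2 \leq C,$$
because the block-matrix comparison $(g_F(t))^{-1}\leq (g^{-1})|_{TX_y\otimes TX_y}$ implies $|\nabla^F u|_{g_F(t)}\leq |\nabla u|_g$. To prove it I would use the evolution identity
$$\tdelta |\nabla u|^2 = |\nabla u|^2 + 2\re\bigl(\nabla \tr_\omega(\chi)\cdot \bar\nabla u\bigr) - |\nabla\nabla u|^2 - |\bar\nabla\nabla u|^2$$
recorded at the start of this section, together with the $C^0$ bounds on $u$, $\tr_\omega(\chi)$ and its first derivative from Lemma \ref{c0}. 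Apply the parabolic maximum principle to a Perelman-type auxiliary function such as
$$Q = \frac{|\nabla u|^2}{A-u}, \qquad A > 1 + \sup_{X\times [0,\infty)}|u|;$$
the self-reinforcing $+|\nabla u|^2$ term is absorbed by the differentiation of $(A-u)^{-1}$, the cross term $\re(\nabla \tr_\omega(\chi)\cdot \bar\nabla u)$ is dominated via Cauchy-Schwarz against the good terms $-|\nabla\nabla u|^2-|\bar\nabla\nabla u|^2$, and at a space-time maximum of $Q$ one reads off a uniform bound. This is the long-time analogue of Perelman's Ricci-potential gradient estimate, in the spirit of \cite{ST4}.

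Combining the two ingredients: for any $\cK\subset\subset X^\circ$, any $t\geq 0$, and any $x,x'\in X_y$ with $y=\Phi(x)\in \Phi(\cK)$, a $g_F(t)$-minimising geodesic on the fibre gives
$$|u(x)-u(x')| \leq \sqrt{C}\, d_{g_F(t)}(x,x') \leq \sqrt{C}\, C'\, e^{-t/2},$$
so $\mathrm{osc}_{X_y} u \leq \sqrt{C}\,C'\, e^{-t/2}$ uniformly on $\cK$, and since $\inf_{X_y} u \leq \bar u(y) \leq \sup_{X_y} u$ the statement follows. The only substantive difficulty is the uniform gradient bound of the second paragraph: the $+|\nabla u|^2$ term in the evolution equation is borderline and killing it requires the judicious choice of auxiliary function above. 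Once that estimate is in hand, the fibre-collapse bound from Lemma \ref{s2fib} reduces the lemma to a one-line Lipschitz estimate on the shrinking fibre.
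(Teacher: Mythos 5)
Your proof is correct and follows essentially the same route as the paper's: combine the uniform gradient bound $\sup_X|\nabla u|\leq C$ (which the paper simply imports from Lemma \ref{c0}/\cite{ST4} rather than re-deriving via the Perelman-type maximum principle you sketch), the exponential fibre collapse $\diam(X_y,g(t)|_{X_y})\leq Ce^{-t/2}$ from Lemma \ref{s2fib}, and the observation that the fibre average $\bar u$ lies between $\inf_{X_y}u$ and $\sup_{X_y}u$ (equivalently, $\int_{X_y}(u-\bar u)\,\omega^{n-1}=0$). The only difference is cosmetic: you reprove the gradient estimate in line, whereas the paper treats it as a known collapsible-fibre estimate; the geometric core of the argument is identical.
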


\begin{proof} By Lemma \ref{c0},  there exists $C>0$ such that
$$\sup_{X\times [0, \infty)} |\nabla^F(u - \bar u) |\leq \sup_{X\times [0, \infty)} |\nabla u |\leq C.$$
On the other hand, by Lemma \ref{s2fib}, there exists $C>0$ such that for any  $x\in \cK$ with $y=\Phi(x)$, we have 
$$ C^{-1}  e^{-t} \omega_0|_{X_y}  \leq g(t)|_{X_y} \leq C   e^{-t} \omega_0|_{X_y}$$
for all $t\geq 0$, which  implies 
$$\diam_{g(t)|_{X_y}}(X_y) \leq C^{1\over 2} e^{-{t\over2}}\diam_{g_0|_{X_y}}(X_y).$$ 
The lemma then immediately follows since $\int_{X_y} (u-\bar u)\omega^{n-1} =0$. 
\end{proof}

The following lemma provides the formula for the evolution of $|\nabla^F u|^2$.

\begin{lemma} On $X^\circ$, we have
$$ \left( \ddt{} - \Delta \right) |\nabla^F u|^2 = |\nabla^F u|^2 - |\nabla\nabla^F u|^2 - |\nabla\overline\nabla^F u|^2 + 2\re\left( \nabla^F \tr_\omega(\chi) \cdot \overline\nabla^F u\right),$$
where in local holomorphic coordinates  chosen as in (\ref{cord}), 
 \begin{equation}\label{notat2}
\left\{
\begin{array}{l}
 |\nabla\nabla^F u|^2 = g^{i\bar j} g_F^{\alpha \bar\beta} (\nabla_i \nabla_\alpha u )( \nabla_{\bar j} \nabla_{\bar \beta} u)\\
\\
|\nabla\overline\nabla^F u|^2 = g^{i\bar j} g_F^{\alpha\bar\beta} (\nabla_{\bar j}\nabla_\alpha u )(\nabla_i \nabla_{\bar \beta} u) \\
\\
\nabla^F \tr_\omega(\chi) \cdot \overline\nabla^F u = g_F^{\alpha\bar\beta} \nabla_\alpha \tr_\omega(\chi)\cdot \nabla_{\bar \beta} u. 
\end{array} \right.
\end{equation}

\end{lemma}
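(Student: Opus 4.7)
The stated identity is the fibrewise analogue of the evolution formula for the full gradient $|\nabla u|_g^2$ that appears earlier in this section. The plan is to prove it by direct computation at an arbitrary point $p\in X^\circ$, exploiting the adapted coordinates $(z_1,\dots,z_n)$ described in (\ref{cord}): the fibre through $p$ is cut out by $z_n = $ const near $p$, the coordinates $\{z_\alpha\}_{\alpha=1}^{n-1}$ are holomorphic normal at $p$ when restricted to $X_y$, and $(g_{i\bar j}(t))_{1\le i,j\le n}$ is the identity at $p$. In particular, at $p$ one has $g_F^{\alpha\bar\beta}=\delta^{\alpha\beta}$ and $\partial_\gamma g_F^{\alpha\bar\beta}(p)=\partial_{\bar\gamma}g_F^{\alpha\bar\beta}(p)=0$ for $\gamma\in\{1,\dots,n-1\}$. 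We keep all Greek indices ranging over $\{1,\dots,n-1\}$ and Latin indices over $\{1,\dots,n\}$.

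First I would compute $\partial_t|\nabla^F u|^2$ at $p$. From the Kähler-Ricci flow, $\partial_t g_{\alpha\bar\beta}=-\Ric{\alpha}{\beta}-g_{\alpha\bar\beta}$, hence
\[
\partial_t g_F^{\alpha\bar\beta}\big|_p \;=\; g_F^{\alpha\bar\gamma}(R_{\gamma\bar\delta}+g_{\gamma\bar\delta})g_F^{\delta\bar\beta}\big|_p \;=\; \Ric{\alpha}{\beta}+\delta_{\alpha\beta}.
\]
Combining with the evolution $(\partial_t - \Delta)u = \tr_\omega(\chi)-\kappa$ from (\ref{s3evu}), so $\partial_t u_\alpha = (\Delta u)_\alpha + (\tr_\omega \chi)_\alpha$, yields the $\partial_t$ contribution as a sum of a Ricci term $\Ric{\alpha}{\beta}u_\alpha u_{\bar\beta}$, the desired $|\nabla^F u|^2$ term, and cross terms involving $(\Delta u)_\alpha$ and $(\tr_\omega\chi)_\alpha$.

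Next I would expand $\Delta|\nabla^F u|^2$ at $p$ through the identity
\[
\Delta(g_F^{\alpha\bar\beta}u_\alpha u_{\bar\beta}) \;=\; g^{i\bar j}\partial_i\partial_{\bar j}(g_F^{\alpha\bar\beta}u_\alpha u_{\bar\beta}).
\]
At $p$, first order fibre-derivatives of $g_F^{-1}$ vanish by the normal-coordinate choice, and any contributions from $\partial_n g_F^{-1}$ and $\partial_{\bar n}g_F^{-1}$ appear symmetrically and are absorbed into curvature terms via the Kähler identities $\partial_k g_{i\bar j}=\partial_i g_{k\bar j}$ together with the Ricci identity $[\nabla_i,\nabla_{\bar j}]u_\alpha=-R_{i\bar j\alpha}{}^\gamma u_\gamma$. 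After expanding, the Laplacian produces the standard Bochner pieces $g_F^{\alpha\bar\beta}(g^{i\bar j}\nabla_i\nabla_{\bar j}u_\alpha)u_{\bar\beta}+\text{c.c.}$ together with $|\nabla\nabla^F u|^2+|\nabla\bar\nabla^F u|^2$, plus a Ricci term of the opposite sign to the one appearing in $\partial_t |\nabla^F u|^2$.

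Finally I would assemble $(\partial_t-\Delta)|\nabla^F u|^2$. The Ricci curvature terms from $\partial_t g_F^{-1}$ and from the Ricci identity in $\Delta$ cancel exactly, as in the standard derivation of the Bochner formula stated above for the full gradient. The $-g_{\alpha\bar\beta}$ piece in $\partial_t g_{\alpha\bar\beta}$ produces the term $|\nabla^F u|^2$ on the right. The $(\Delta u)_\alpha u_{\bar\alpha}$ contributions from $\partial_t u$ telescope against the leading pieces of $\Delta |\nabla^F u|^2$, leaving only $-|\nabla\nabla^F u|^2-|\nabla\bar\nabla^F u|^2$. The remaining $(\tr_\omega\chi)_\alpha u_{\bar\alpha}$ and its conjugate combine into $2\re(\nabla^F\tr_\omega(\chi)\cdot\bar\nabla^F u)$, exactly as stated.

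The main obstacle is the careful bookkeeping in Step 2: because $g_F^{-1}$ is a fibrewise inverse rather than a block of $g^{-1}$, its derivatives in the base direction $z_n$ (which do not vanish even in the adapted coordinates) must be controlled. The key is that the Kähler condition forces all such terms to reorganize into curvature contributions that are absorbed by the cancellation with $\partial_t g_F^{-1}$; the fibre-normality assumption is what prevents any other error terms from surviving at $p$. Since $p\in X^\circ$ was arbitrary and both sides are tensorial, this proves the lemma.
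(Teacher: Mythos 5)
Your proposal follows essentially the same route as the paper: differentiate $g_F^{\alpha\bar\beta}u_\alpha u_{\bar\beta}$ in $t$ using $\partial_t g = -\ric(g)-g$ and $(\partial_t-\Delta)u=\tr_\omega(\chi)-\kappa$, expand the Laplacian in the adapted coordinates from (\ref{cord}), and observe that the Ricci contributions from $\partial_t g_F^{-1}$ and from the second derivatives of $g_F$ cancel, leaving the stated identity. The paper carries out exactly this two-line computation and combines; you are somewhat more explicit about the role of the Ricci and K\"ahler identities in absorbing the base-direction derivatives of $g_F^{-1}$, but that is a matter of exposition rather than a different argument.
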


\begin{proof} Straightforward computations show that at any $p\in X^\circ$ with $y=\Phi(p)$, 
\begin{eqnarray*}
&&\ddt{} |\nabla^F u|^2 \\
&=& \ddt{} \left( g_F^{\alpha\bar\beta} u_\alpha u_{\bar \beta} \right)\\
&=& - g_F^{\alpha\bar\eta} g_F^{\gamma \beta} \ddt{} (g_F)_{\gamma \bar \eta} u_\alpha u_{\bar \beta} + 2\re\left( g_F^{\alpha\bar\beta} \left(\ddt{u} \right)_\alpha u_{\bar\beta}    \right)\\
&=& g_F^{\alpha\bar\eta} g_F^{\gamma \beta} ( R_{\gamma\bar \eta} + g_{\gamma\bar \eta})  u_\alpha u_{\bar \beta} + 2\re \left( g_F^{\alpha\bar\beta}\left( g^{i\bar j}u_{i\bar j} + \tr_\omega(\chi)       \right)_\alpha u_{\bar\beta}    \right) 
\end{eqnarray*}
using (\ref{s3evu}). Also, 
\begin{eqnarray*}
&&\Delta |\nabla^F u|^2\\
&=& g^{i\bar j} \left( g_F^{\alpha\bar\beta} u_\alpha u_{\bar \beta} \right)_{i \bar j}\\
&=& - g^{i\bar j} g_F^{\alpha\bar \eta} g_F^{\gamma \bar \beta} (g_{\gamma \bar\eta})_{i\bar j} u_\alpha u_{\bar \beta} + 2\re\left( g^{i\bar j} g_F^{\alpha\bar\beta} u_{\alpha i \bar j} u_{\bar \beta} \right) + |\nabla \nabla^F u|^2 + |\nabla\overline\nabla^F u|^2\\
&=& g_F^{\alpha \bar \eta} g_F^{\gamma\bar \beta}R_{\gamma \bar\eta} u_\alpha u_{\bar \beta} +2\re\left( g^{i\bar j} g_F^{\alpha\bar\beta} u_{\alpha i \bar j} u_{\bar \beta} \right) + |\nabla \nabla^F u|^2 + |\nabla\overline\nabla^F u|^2
\end{eqnarray*}
The lemma immediately follows by combining the above calculations. 
\end{proof}

The following theorem is main result of the section as the fibrewise gradient estimate for the quasi-Ricci potential $u$.
\begin{proposition} For any $\cK\subset\subset X^\circ$, there exists $C=C(\cK)>0$ such that for any $t\in [0, \infty)$, we have 
$$\sup_{K \times [0, \infty)} \left( |u - \bar u| + |\nabla^F u|^2 \right) \leq C e^{-t}. $$

\end{proposition}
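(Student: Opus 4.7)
The plan is to apply the parabolic maximum principle to the coupled test quantity $G := e^t\bigl[\,|\nabla^F u|^2 + A(u-\bar u)^2\,\bigr]$ for a large constant $A$, localized by a smooth cutoff $\eta$ equal to $1$ on $\cK$ and compactly supported in a slightly larger $\cK' \subset\subset X^\circ$. The two ingredients are the evolution of $|\nabla^F u|^2$ from the preceding lemma and the evolution of $(u-\bar u)^2$ derived from $(\partial_t - \Delta)u = \tr_\omega\chi - 1$ together with Lemma 6.2 (using $\kappa = 1$ and $a'(t)/a(t) = n-1$, which kills the $\bar u$ term on the right-hand side).

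First I would compute
$$(\partial_t - \Delta)(u-\bar u)^2 \leq -2|\nabla^F u|^2 + 2(u-\bar u)\,\mathcal{E}(t),$$
where $\mathcal{E}(t) = (\tr_\omega\chi - 1) - \langle\tr_\omega\chi - 1\rangle_{X_y} - \langle|\nabla^F u|^2\rangle_{X_y}$ is the error and $\langle\cdot\rangle_{X_y}$ denotes the fibre average; the sink $-2|\nabla^F u|^2$ comes from $|\nabla(u-\bar u)|^2 \geq |\nabla^F u|^2$. Combining with the evolution of $|\nabla^F u|^2$, applying Cauchy-Schwarz to the cross term $2\re(\nabla^F \tr_\omega\chi \cdot \bar\nabla^F u)$ using the local boundedness of $|\nabla^F \tr_\omega\chi|$ on $\cK'$, and discarding the non-positive Hessian terms $-|\nabla\nabla^F u|^2 - |\nabla\bar\nabla^F u|^2$, I obtain
$$(\partial_t - \Delta) G \leq -c\,e^t|\nabla^F u|^2 + e^t\bigl[A(u-\bar u)^2 + 2A|u-\bar u|\cdot|\mathcal{E}|\bigr] + C_1$$
provided $A$ is large enough to dominate the production term $+|\nabla^F u|^2$ from Lemma 6.6 plus the Cauchy-Schwarz remainder. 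The bad term $e^t(u-\bar u)^2$ is uniformly bounded by the prior oscillation estimate $|u-\bar u| \leq Ce^{-t/2}$ of Lemma 6.5; the mixed term requires $\mathcal{E} = O(e^{-t/2})$ in $C^0(\cK')$, which I would extract from the convergence $\tr_\omega\chi \to \kappa$ on $X^\circ$ furnished by the parabolic Schwarz lemma of \cite{ST1, ST2, ST4} combined with the scalar curvature convergence of \cite{J} through the identity $\Delta u = -\sR - \tr_\omega\chi$.

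After localization by $\eta^2$, the standard cutoff correction terms of the form $|\nabla\eta|^2 G + |\Delta\eta|G + 4\eta\nabla\eta\cdot\nabla G$ are absorbed by the negative $-c\,e^t|\nabla^F u|^2$ term using the global gradient bound $|\nabla u| \leq C$ from Lemma 2.5. A standard parabolic maximum principle argument on $X\times [0, T]$, passed to $T\to\infty$, then yields $\eta^2 G \leq C$ uniformly in $t$, which gives both $|\nabla^F u|^2 \leq Ce^{-t}$ and $|u-\bar u|^2 \leq Ce^{-t}$ on $\cK$. The central mechanism is the coupling between $|\nabla^F u|^2$ and $A(u-\bar u)^2$: the naive evolution of $|\nabla^F u|^2$ carries a $+|\nabla^F u|^2$ production term that would drive exponential growth, but this is exactly cancelled by the $-2A|\nabla^F u|^2$ sink coming from $(u-\bar u)^2$. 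The main technical obstacle is securing the $O(e^{-t/2})$ decay of the defect $\mathcal{E}$; the part involving $\langle|\nabla^F u|^2\rangle_{X_y}$ is a priori only $O(1)$ by Lemma 2.5 but is multiplied by the small factor $|u-\bar u|$, so a bootstrap through the improving oscillation bound closes the argument, while the $\tr_\omega\chi - \kappa$ piece must be handled by invoking the refined convergence results cited above.
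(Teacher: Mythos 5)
Your coupled test function $G = e^t\bigl[\,|\nabla^F u|^2 + A(u-\bar u)^2\,\bigr]$ is a genuinely different device from the paper's. The paper works with $v=\rho^2 u$ and the quotient-type quantity $H_1 = \dfrac{|\nabla^F v|^2}{A_1 e^{-t/2} + (v-\bar v)} + \tr_\omega(\chi)$, and the central engine is an explicit iteration: first deduce $|v-\bar v|\le Ce^{-(1-2^{-2})t}$ from the input $|v-\bar v|\le Ce^{-(1-2^{-1})t}$, then repeat with $H_m$ built from $A_m e^{-(1-2^{-m})t}$, proving $A_{m+1}\le\tfrac12 A_m + F$, and passing $m\to\infty$. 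This iterative geometric improvement of the decay rate is not an optional refinement --- it is, as far as I can see, the content that your proposal leaves as a ``bootstrap closes the argument'' gesture. Let me pinpoint why the gap is real.

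First, your use of ``local boundedness of $|\nabla^F\tr_\omega\chi|$ on $\cK'$'' is unjustified. Lemma \ref{c0} gives a $C^0$ bound on $\tr_\omega(\chi)$ but not a time-uniform bound on its gradient; the local curvature estimates available (\cite{FZy}) are of order $e^t$, not $O(1)$. The paper circumvents exactly this by adding $\tr_\omega(\chi)$ to $H_1$: the evolution $\tdelta\tr_\omega(\chi)\le -|\nabla\tr_\omega(\chi)|^2 + C$ then supplies the negative term $-|\nabla\tr_\omega(\chi)|^2$ that absorbs the Cauchy--Schwarz remainder from the cross term $2\re(\nabla^F\tr_\omega\chi\cdot\overline\nabla^F u)$. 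Your scheme discards this device and has no replacement.

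Second, and more seriously, the piece $\langle|\nabla^F u|^2\rangle_{X_y}$ of your error $\mathcal{E}$ cannot be controlled in the way you sketch. It is a priori only $O(1)$, so the mixed term is of size
$$2A e^t |u-\bar u|\,\langle|\nabla^F u|^2\rangle_{X_y} \;\lesssim\; e^{t}\cdot e^{-t/2}\cdot O(1) \;=\; O(e^{t/2}),$$
which diverges. Even if you localize so that $\eta$ is fibre-constant and use the maximum of $\eta^2 G$ to bound the fibre average $\langle|\nabla^F u|^2\rangle$ by $e^{-t_0}\sup(\eta^2G)/\eta^2(x_0)$, the resulting positive contribution is proportional to $e^{-t_0/2}\sup(\eta^2G)$, while the only negative term you retain, $-c\,e^{t_0}\eta^2|\nabla^F u|^2$, may be only a small fraction of $G(x_0,t_0)$ at the extremum (which could instead be dominated by the $A e^t(u-\bar u)^2$ part), and for moderate $t_0$ the factor $e^{-t_0/2}$ offers no help. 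Thus a single application of the maximum principle does not close the inequality; one needs to first establish the improved decay $|u-\bar u|\lesssim e^{-3t/4}$, feed that in, and iterate. That is exactly the architecture of the paper's proof.

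Third, your claim that the $\tr_\omega\chi$ part of $\mathcal{E}$ is $O(e^{-t/2})$ has no support. The convergence $\tr_\omega\chi\to\kappa$ cited from \cite{J} plus $\Delta u=-\sR-\tr_\omega\chi$ gives at best qualitative $C^0(X^\circ)$ convergence without a rate, and the fibrewise oscillation $\tr_\omega\chi-\langle\tr_\omega\chi\rangle_{X_y}$ is not shown to decay at any exponential rate. The paper never needs such an input: the quotient structure of $H_1$, the inclusion of $\tr_\omega(\chi)$ as an additive term, and the step-by-step iteration handle all the error contributions without any a priori decay rate on $\tr_\omega\chi$.
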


\begin{proof} We will break the proof in the following three steps. \\

\noindent{\it Step 1.}  Given any point $y_0 \in X_{\textnormal{can}}^\circ$, there exists $r_0>0$ such that 
\begin{equation}\label{s7bcho}
B_{r_0} = B_\chi(y_0, r_0) \subset B_{2r_0}= B_\chi(y_0, 2r_0) \subset\subset X_{\textnormal{can}}^\circ . 
\end{equation}
We can find a smooth cut-off function $\rho$ on $B_{r_0}$ such that 
\begin{equation}\label{s7cutoff}
\rho= 1 ~ \textnormal{on}~ B_{r_0}, ~\supp~ \rho \subset\subset B_{2r_0} 
\end{equation}
satisfying
\begin{equation}\label{s7cutoff2} 
\sqrt{-1} \partial \rho \wedge \dbar \rho \leq C \chi, ~ - C \chi \leq \ddbar \rho \leq C \chi,
\end{equation}
on $X$ for some fixed constant $C>0$ dependent on $B_{2r_0}$. 

We let 
\begin{equation}\label{s7bcho2}
U_{r_0} = \Phi^{-1}(B_{r_0}), ~U_{2r_0} = \Phi^{-1} (B_{2r_0})
\end{equation}
 and for convenience, we still denote $\rho$ by $\Phi^*\rho$. We also let
\begin{equation}\label{s7diam}
 D = \sup_{y \in B_{2r_0}, t\geq 0} \diam\left(X_y, e^t g_F(t) \right) < \infty. 
 \end{equation}

\bigskip

\noindent{\it Step 2.}  We define
$$ v= \rho^2 u, ~ \bar v = \rho^2 \bar u, $$
both of which are smooth functions on $X$ with compact support in $U_{2r_0}$. Our goal of this step is to derive the evolution of $|\nabla^F v|^2$ and $(v- \bar v)$. 

\begin{claim} On $U_{2r_0}$, we have
\begin{eqnarray*}
&&\left( \ddt{} - \Delta \right) |\nabla^F v|^2\\
&\leq& |\nabla^F v|^2 - \frac{1}{2} \rho^4 \left( |\nabla \nabla^F u |^2 + |\nabla\overline\nabla^F u|^2 \right) - \frac{1}{4} \left( |\nabla\nabla^F v|^2 + |\nabla\overline\nabla^F v|^2 \right) \\
&& + |\nabla\rho^2|^2 |\nabla^F u|^2 + 2\rho^4 ~\re\left( \nabla^F(\tr_\omega(\chi)) \cdot \overline\nabla^F u \right) - |\nabla^F u|^2 \Delta \rho^4 - 2 \re\left(  \nabla|\nabla^F u|^2 \cdot \overline\nabla\rho^4\right).
\end{eqnarray*}

\end{claim}

\begin{proof} On each regular fibre $X_y$ for $s\in X_{\textnormal{can}}^\circ$, we have
$$ |\nabla^F v|^2 = g_F^{\alpha\bar\beta} v_\alpha v_{\bar\beta} = \rho^4 |\nabla^F u|^2. $$
Then
\begin{eqnarray*}
&&\left( \ddt{} - \Delta\right) |\nabla^F v|^2\\
&=& \rho^4 \left( \ddt{} - \Delta\right) |\nabla^F u|^2 - |\nabla^F u|^2 \Delta \rho^4 - 2\re \left( \nabla|\nabla^F u|^2 \cdot \overline\nabla \rho^4\right) \\
&=& |\nabla^F v |^2 - \rho^4 \left( |\nabla\nabla^F u|^2 + |\nabla\overline\nabla^F u|^2 \right) + 2 \rho^4 \re \left(  \nabla^F \tr_\omega(\chi)\cdot \overline\nabla^F u \right)  \\
&&- |\nabla^F|^2 \Delta \rho^4 - 2\re \left( \nabla|\nabla^F u|^2 \cdot \overline\nabla \rho^4\right). 
\end{eqnarray*}
On the other hand, 
\begin{eqnarray*}
|\nabla\nabla^F v|^2 &=& g^{i\bar j} g_F^{\alpha\bar \beta} \nabla_i \nabla_\alpha(\rho^2 u) \nabla_{\bar j} \nabla_{\bar \beta} (\rho^2 u)\\
&=& g^{i\bar j} g_F^{\alpha\bar \beta} \nabla_i ( \rho^2\nabla_\alpha u) \nabla_{\bar j} (\rho^2 \nabla_{\bar \beta}   u)\\
&\leq& 2 |\nabla \rho^2|^2 |\nabla^F u|^2 + 2\rho^4 |\nabla\nabla^F u|^2
\end{eqnarray*}
and similarly,
$$ |\nabla \overline\nabla^F v|^2 \leq  2 |\nabla \rho^2|^2 |\nabla^F u|^2 + 2\rho^4 |\nabla\overline\nabla^F u|^2.$$
Combining the above two estimates, we have
$$\frac{1}{2} \rho^4\left( |\nabla\nabla^F u|^2 + |\nabla\overline\nabla^F u|^2 \right) \geq \frac{1}{4}\left( |\nabla\nabla^F v|^2 + |\nabla\overline\nabla^F v|^2 \right) - |\nabla \rho^2|^2 |\nabla^F u|^2. $$ 
The claim then follows easily.
\end{proof}

\begin{claim} There exists $C>0$ such that on $U_{2r_0}\times [0, \infty)$, we have
$$-C \leq  \left( \ddt{} - \Delta\right) (v - \bar v)   \leq C. $$
\end{claim}
\begin{proof} Straightforward calculations show that
\begin{eqnarray*}
&&\left( \ddt{} - \Delta \right) (v- \bar v)\\
&=& \rho^2 \left( \ddt{} - \Delta \right) (u- \bar u) - (u - \bar u) \Delta \rho^2 - 2 \re \left( \nabla( u-\bar u)\cdot \overline\nabla\rho^2 \right). 
\end{eqnarray*}
The claim is then proved by Lemma \ref{c0}, Corollary \ref{s7cor1} and the choice of $\rho$.
\end{proof}

\noindent{\it Step 3. }  The goal of this step is to estimate $v- \bar v$. By the gradient estimate in Lemma \ref{c0}, there exists $C>0$ such that on $X\times [0, \infty)$, 
$$|\nabla ^F v|^2 \leq C, $$
and so there exists $A_1\geq 1$ such that for all $t\geq 0$, we have
$$ 1+ e^{\frac{t}{2}} \sup_X |v - \bar v| \leq A_1. $$
Our ultimate goal is to show $e^{t} |v- \bar v|$ is uniformly bounded. We first define the following quantity 
$$H_1 = \frac{|\nabla^F v|^2}{A_1 e^{-\frac{t}{2}} + (v- \bar v) } + \tr_\omega(\chi). $$
For any fixed $0< \varepsilon<<1$ (to be determined later), the evolution of $H_1$ can be estimated as below.
\begin{eqnarray*}
&& \tdelta H_1\\
&=&\frac{\tdelta |\nabla^F v|^2}{A_1 e^{- {t\over 2}} + (v-\bar v) } - \frac{ |\nabla^F v|^2 \tdelta(v-\bar v)}{\left( A_1 e^{- {t\over 2}} + (v-\bar v) \right)^2} + {A_1\over 2}  e^{-{t\over 2}} \frac{|\nabla^F v|^2}{\left( A_1 e^{- {t\over 2}} + (v-\bar v) \right)^2}\\
&& -  \frac{ 2|\nabla^F v|^2 |\nabla(v-\bar v)|^2}{\left( A_1 e^{- {t\over 2}} + (v-\bar v) \right)^3} + 2\frac{ \re\left( \nabla |\nabla^F v|^2 \cdot \overline\nabla (v-\bar v) \right)} {\left( A_1 e^{- {t\over 2}} + (v-\bar v) \right)^2} + \tdelta \tr_\omega(\chi)\\
&=&\frac{\tdelta |\nabla^F v|^2}{A_1 e^{- {t\over 2}} + (v-\bar v) } - \frac{ |\nabla^F v|^2 \tdelta(v-\bar v)}{\left( A_1 e^{- {t\over 2}} + (v-\bar v) \right)^2} + {A_1\over 2}  e^{-{t\over 2}} \frac{|\nabla^F v|^2}{\left( A_1 e^{- {t\over 2}} + (v-\bar v) \right)^2}\\
&& - 2\varepsilon \frac{ |\nabla^F v|^2 |\nabla(v-\bar v)|^2}{\left( A_1 e^{- {t\over 2}} + (v-\bar v) \right)^3} + (2-2\varepsilon)\frac{ \re\left( \nabla H_1 \cdot \overline\nabla (v-\bar v) \right)} { A_1 e^{- {t\over 2}} + (v-\bar v) } + \tdelta \tr_\omega(\chi)\\
&& - (2-2\varepsilon) \frac{ \re\left( \nabla \tr_\omega(\chi) \cdot \overline\nabla (v-\bar v) \right)} { A_1 e^{- {t\over 2}} (v-\bar v)}  + 2\varepsilon \frac{ \re\left(\nabla|\nabla^F v|^2 \cdot \overline\nabla(v-\bar v) \right)}{\left( A_1 e^{- {t\over 2}} + (v-\bar v) \right)^2}.
\end{eqnarray*}
Using the evolution of $\tr_\omega(\chi)$ and Lemma \ref{c0},  there exists $C>0$ such that on $X\times [0, \infty)$, 
$$\tdelta \tr_\omega(\chi) \leq - |\nabla \tr_\omega(\chi)|^2 + C.$$
We further apply Claim 1 and Claim 2 to compute
\begin{eqnarray*}
&&\tdelta H_1\\
&\leq& \frac{|\nabla^F v|^2 - {1\over 2} \rho^4 \left( |\nabla\nabla^F u|^2 + |\nabla \overline\nabla^F u|^2\right) - {1\over 4} \left( |\nabla\nabla^F v|^2 + |\nabla\overline\nabla^F v|^2 \right) + |\nabla \rho^2|^2 |\nabla^F u|^2  }{A_1 e^{- {t\over 2}} + (v-\bar v) }\\
&&+ \frac{ 2\rho^4 \re\left( \nabla^F tr_\omega(\chi)\cdot \nabla^F u\right) - |\nabla^F u|^2 \Delta\rho^4 - 2\re\left( \nabla|\nabla^F u|^2 \cdot \overline\nabla \rho^4 \right) }{A_1 e^{- {t\over 2}} + (v-\bar v) }\\
&& + C \frac{|\nabla^F v|^2}{\left(A_1 e^{- {t\over 2}} + (v-\bar v)\right)^2} + {1\over 2} A_1 e^{-{t\over 2}} \frac{|\nabla^F v|^2}{\left(A_1 e^{- {t\over 2}} + (v-\bar v)\right)^2} - 2\varepsilon \frac{ |\nabla^F v|^2 |\nabla(v-\bar v)|^2}{\left( A_1 e^{- {t\over 2}} + (v-\bar v) \right)^3} \\
&&+ (2-2\varepsilon)\frac{ \re\left( \nabla H_1 \cdot \overline\nabla (v-\bar v) \right)} { A_1 e^{- {t\over 2}} + (v-\bar v) }  - |\nabla \tr_\omega(\chi)|^2 + C\\
&& - (2-2\varepsilon) \frac{ \re\left( \nabla \tr_\omega(\chi) \cdot \overline\nabla (v-\bar v) \right)} { A_1 e^{- {t\over 2}} + (v-\bar v)}  + 2\varepsilon \frac{ \re\left(\nabla|\nabla^F v|^2 \cdot \overline\nabla(v-\bar v) \right)}{\left( A_1 e^{- {t\over 2}} + (v-\bar v) \right)^2}.
\end{eqnarray*}

We now estimate the terms in the above calculations. There exists $C>0$ such that the following estimates hold on $X\times [0, \infty)$. 

\begin{enumerate}

\item $$\frac{|\nabla^F v|^2 + |\nabla\rho^2|^2 |\nabla^Fu|^2 - |\nabla^F u|^2 \Delta \rho^4}{A_1 e^{- {t\over 2}} + (v-\bar v) }\leq \frac{C}{A_1 e^{- {t\over 2}} + (v-\bar v) }. $$

\item $$2\rho^4\frac{ \re\left( \nabla^F \tr_\omega(\chi) \cdot \overline\nabla^F u \right)} { A_1 e^{- {t\over 2}} +(v-\bar v)} \leq {1\over 2} |\nabla \tr_\omega(\chi)|^2 + C \frac{|\nabla^F u|^2}{ \left( A_1 e^{- {t\over 2}}+ (v-\bar v) \right)^2 }. $$

\item $$-(2-2\varepsilon) \frac{ \re\left( \nabla \tr_\omega(\chi) \cdot \overline\nabla (v-\bar v) \right)} { A_1 e^{- {t\over 2}} + (v-\bar v)} \leq {1\over 2} |\nabla \tr_\omega(\chi)|^2 + 8 \frac{|\nabla(v-\bar v )|^2}{ \left( A_1 e^{- {t\over 2}}+ (v-\bar v) \right)^2}. $$

\item  \begin{eqnarray*}
&&- \frac{2\re\left(\nabla|\nabla^F u|^2 \cdot \overline\nabla\rho^4\right)}{A_1 e^{- {t\over 2}}+ (v-\bar v) }\\
 &\leq&  2 \frac{\left(|\nabla\nabla^F u| + |\nabla\overline\nabla^F u| \right) |\nabla^F u| |\nabla\rho^4|}{A_1 e^{- {t\over 2}}+ (v-\bar v) }\\
&\leq& \frac{\rho^4}{100}\frac{ |\nabla\nabla^F u|^2 + |\nabla\overline\nabla^F u|^2}{A_1 e^{- {t\over 2}}+ (v-\bar v)} + \frac{C}{A_1 e^{- {t\over 2}}+ (v-\bar v)}. 
\end{eqnarray*}

\item

\begin{eqnarray*}
&& 2\varepsilon \frac{ \re\left(\nabla|\nabla^F v|^2 \cdot \overline\nabla(v-\bar v) \right)}{\left( A_1 e^{- {t\over 2}} + (v-\bar v) \right)^2}\\
&\leq& 2\varepsilon \frac{\left(|\nabla\nabla^F v|+ |\nabla\overline\nabla^F v| \right) |\nabla^F v| |\nabla(v-\bar v)|}{\left( A_1 e^{- {t\over 2}}+ (v-\bar v) \right)^2} \\
&\leq& {1\over 100} \frac{|\nabla\nabla^F v|^2 + |\nabla\overline\nabla^F v|^2}{A_1 e^{- {t\over 2}}+ (v-\bar v)} + C\varepsilon^2 \frac{|\nabla^F v|^2 |\nabla(v- \bar v)|^2}{\left(A_1 e^{- {t\over 2}}+ (v-\bar v)\right)^3}.
\end{eqnarray*}

\end{enumerate}

Now we can conclude that there exists $C>0$ such that on $X\times [0, \infty)$, 
\begin{eqnarray*}
&&\tdelta H_1 \\
&\leq&  - \varepsilon \frac{|\nabla^F v|^2 |\nabla(v-\bar v)|^2}{\left(A_1 e^{- {t\over 2}}+ (v-\bar v)\right)^3} + C\left(1+A_1 e^{-{t\over 2}} \right) \frac{|\nabla^F v|^2}{\left( A_1 e^{- {t\over 2}}+ (v-\bar v)\right)^2} + C \frac{|\nabla(v-\bar v)|^2}{\left( A_1 e^{- {t\over 2}}+ (v-\bar v)\right)^2}\\
&&+ \frac{C}{A_1 e^{- {t\over 2}}+ (v-\bar v)} + C + (2-2\varepsilon) \frac{ \re\left( \nabla H_1 \cdot \overline\nabla(v- \bar v)\right)}{A_1 e^{- {t\over 2}}+ (v-\bar v)} \\
&\leq&  -\frac{ \varepsilon}{2} \frac{|\nabla^F v|^4 }{\left(A_1 e^{- {t\over 2}}+ (v-\bar v)\right)^3} + C\left(1+A_1 e^{-{t\over 2}} \right) \frac{|\nabla^F v|^2}{\left( A_1 e^{- {t\over 2}}+ (v-\bar v) \right)^2} + \frac{C}{ A_1 e^{- {t\over 2}}+ (v-\bar v)} + C  \\
&& + \frac{\varepsilon}{2} \frac{|\nabla (v-\bar v)|^2}{\left( A_1 e^{- {t\over 2}}+ (v-\bar v)\right)^2} \left( C\varepsilon^{-1} - \frac{|\nabla^F v|^2}{A_1 e^{- {t\over 2}}+ (v-\bar v)}\right) + 2(1-\varepsilon) \frac{\re\left(\nabla H_1\cdot \overline\nabla (v-\bar v)\right)}{A_1 e^{- {t\over 2}}+ (v-\bar v)}.
\end{eqnarray*}

 We can assume that $\tr_\omega(\chi) \leq C_0$ on $X\times[0, \infty)$ for some uniform $C_0>0$.  Suppose $H_1$ achieves its maximum at $(x_0, t_0)$ with $t_0>0$. If 
$$\frac{|\nabla^F v|^2}{A_1 e^{- {t\over 2}}+ (v-\bar v)} (x_0, t_0) \leq C\varepsilon^{-1}, $$
then we are done since 
$$H_1(x_0, t_0) \leq C\varepsilon^{-1} + C_0.$$
Otherwise, we have at $(x_0, t_0)$ that 
$$0 \leq - {\varepsilon\over 2} \frac{|\nabla^F v|^4}{\left( A_1 e^{- {t\over 2}}+ (v-\bar v)\right)^3} + C\left(1+A_1 e^{-{t\over 2}}\right) \frac{ |\nabla^F v|^2}{ \left( A_1 e^{- {t\over 2}}+ (v-\bar v)\right)^2} + \frac{C}{A_1 e^{- {t\over 2}}+ (v-\bar v)} + C .$$
From this, we have 
$$ \frac{|\nabla^F v|^2}{A_1 e^{- {t\over 2}}+ (v-\bar v)} (x_0, t_0) \leq C\varepsilon^{-1} \left(1+A_1 e^{-{t_0\over 2}} \right)$$
for some fixed $C>0$.
If we  choose  $T_0 = - 6 \log \varepsilon >0$ and if $t_0\geq T_0$, we have 
$$ \frac{|\nabla^F v|^2}{A_1 e^{- {t\over 2}}+ (v-\bar v)} (x_0, t_0) \leq C(\varepsilon^{-1}  +\varepsilon^2 A_1 )$$
and so
$$\sup_{X\times [0, \infty)} \frac{|\nabla^F v|^2}{A_1 e^{- {t\over 2}}+ (v-\bar v)} \leq C e^{T_0\over 2} \sup_{X\times[0, T_0)}|\nabla^F v|^2 + C(\varepsilon^{-1} + \varepsilon^2 A_1) \leq C(\varepsilon^{-3} + \varepsilon^2 A_1),$$ 
for some $C>0$ independent of $\varepsilon$ and $A_1$. Therefore there exists $E>0$ such tht 
\begin{equation}\label{s7vf1}
\sup_{X\times [0, \infty)} |\nabla^F v| \leq Ee^{-{t\over 4}} (\varepsilon^{-2} + \varepsilon A_1)
\end{equation}
and so 
\begin{equation}\label{s7vf2}
\sup_{X\times [0, \infty) } |v-\bar v| \leq D e^{-{t\over2}} \sup_{X\times[0, \infty)}|\nabla^F v| \leq DE(\varepsilon^{-2}+ \varepsilon A_1) e^{ - (1- {1\over 4}) t}. 
\end{equation}
We choose the fixed constant $\varepsilon>0$ sufficiently small so that
$$DE\varepsilon \leq {1\over 2},$$
where $D$ is defined in (\ref{s7diam}).
This implies the key estimate of this step: 
\begin{equation} \label{s7vf3}
e^{(1-{1\over 4})t} |v-\bar v| \leq {1\over 2} A_1 + F 
\end{equation}
on $X\times [0, \infty)$, where  $F= DE\varepsilon^{-2}>0$.

\bigskip

\noindent{\it Step 4. }  Now for any $m \geq 1$, we set 
$$A_m = 1+ \sup_{X\times [0, \infty)} \left( e^{ (1- 2^{-m}) t} |v- \bar v| \right) . $$
In Step 3, we have shown
$$ A_2 \leq {1\over 2} A_1 + F. $$
We then repeat the argument of Step 3, replacing  $H_1$  by
$$H_2 = \frac{|\nabla^F v|^2 }{ A_2 e^{-(1-2^{-2})t} + (v-\bar v)} + \tr_\omega(\chi). $$
The calculations in Step 3 give the following estimate 
$$\sup_{X\times [0, \infty)} |\nabla^F v| \leq E e^{ - \frac{ 1- 2^{-2}}{2} t} (\varepsilon A_2 + \varepsilon^{-2})$$
and
$$\sup_{X\times [0, \infty) } |v-\bar v| \leq D e^{-{t\over2}} \sup_{X\times[0, \infty)}|\nabla^F v| \leq DE (\varepsilon^{-2}+ \varepsilon A_1) e^{ - (1- {1\over 2^3}) t}, $$
for the same constant $E>0$ in (\ref{s7vf1}) and (\ref{s7vf2}).
Then
$$A_3 \leq \frac{1}{2} A_2 + F $$
with $E>0$ defined in (\ref{s7vf3})

Repeating the above argument, we have
\begin{eqnarray}
\sup_{X\times [0, \infty)} |\nabla^F v|  &\leq & E e^{ - \frac{ 1- 2^{-m}}{2} t} (\varepsilon A_2 + \varepsilon^{-2})  \label{s7fv4}\\
\sup_{X\times [0, \infty) } |v-\bar v| &\leq& D e^{-{t\over2}} \sup_{X\times[0, \infty)}|\nabla^F v| \leq DE(\varepsilon^{-2}+ \varepsilon A_m) e^{ - \left(1-  2^{-(m+1)} \right) t}  \label{s7fv5} \\
A_{m+1} &\leq& \frac{1}{2} A_m + F
\end{eqnarray}
with $E>0$ and $F>0$ are defined in (\ref{s7vf1}), (\ref{s7vf2}) and (\ref{s7vf3}). 
Now letting $m\rightarrow \infty$, we have
$$\limsup_{m\rightarrow \infty} A_m \leq \frac{1}{2} \limsup_{m \rightarrow \infty} A_m + F$$
and so
$$\limsup_{m \rightarrow \infty} A_m\leq 2F.$$
Immediately, we can conclude by combining the above estimate for $A_m$ with (\ref{s7fv4}) and (\ref{s7fv5}) that there exists $C>0$ such that 
$$\sup_{X \times [0, \infty) } \left( |v- \bar v| + |\nabla^F v|^2 \right) \leq C e^{-t}$$
for some $C>0$. This completes the proof of proposition.
\end{proof}


\section{Ricci curvature estimates}

We will prove Theorem \ref{main2} in this section.  The following evolution is well-known for curvature and Ricci curvature tensors along the K\"ahler-Ricci flow.

\begin{lemma} Along the normalized K\"ahler-Ricci flow (\ref{krflow}), we have the following evolution for the curvature tensor and the Ricci curvature. 
\begin{eqnarray*}
\tdelta R_{i\bar j k \bar l} &=& - R_{i\bar j k \bar l} + R_{i\bar j p \bar q} {{R^{\bar q}}_{\bar j k}}^p - R_{i\bar p k \bar q} {{{R^{\bar p}}_{\bar j}}^{\bar q}}_{\bar l}\\
&& - {1\over 2} \left( {R_i}^p R_{p\bar j k \bar l} + {R^{\bar p}}_{\bar j} R_{i \bar p k \bar l} + {R_k}^pR_{i\bar j p \bar l} + { R^{\bar p}}_{\bar l} R_{i \bar j k \bar p}   \right)\\
\tdelta R_{i \bar j} &=& g^{k\bar q}g^{p \bar l} R_{p\bar q} R_{i\bar j k \bar l} - g^{p \bar q} R_{i \bar q} R_{p\bar j}.
\end{eqnarray*}

\end{lemma}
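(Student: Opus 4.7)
The plan is to reduce to the unnormalized K\"ahler-Ricci flow via a standard rescaling. Set $\tilde g(\tau)=e^t g(t)$ with $\tau=e^t-1$; a direct check shows that $g$ solves the normalized flow $\partial_t g=-\ric(g)-g$ if and only if $\tilde g$ solves the unnormalized K\"ahler-Ricci flow $\partial_\tau\tilde g=-\ric(\tilde g)$. The computation then amounts to importing the well-known Hamilton--Cao evolution formulas under the unnormalized flow and tracking the scaling weights carefully as we return to $g$.

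The unnormalized evolution equations are themselves derived by direct calculation, starting from $R_{i\bar j}=-\partial_i\partial_{\bar j}\log\det g$, applying the variation formula for the Riemann tensor under a general K\"ahler deformation $\partial_t g=h$, and using the K\"ahler second Bianchi identity $\nabla_a R_{b\bar c d\bar e}=\nabla_d R_{b\bar c a\bar e}$. The resulting identities have the same shape as those claimed in the lemma for the quadratic curvature reaction terms, with the crucial difference that for the Riemann tensor there is no linear term of the form $-\tilde R_{i\bar j k\bar l}$, while for the Ricci tensor the quadratic contributions $\tilde R_{i\bar j k\bar l}\tilde R^{\bar l k}-\tilde R_i^{\ p}\tilde R_{p\bar j}$ appear exactly as stated.

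To transfer these identities to the normalized flow, I use that constant rescaling leaves the K\"ahler Christoffel symbols and hence the covariant derivatives unchanged, while $\tilde R_{i\bar j k\bar l}=e^t R_{i\bar j k\bar l}$ scales linearly with the metric and $\tilde R_{i\bar j}=R_{i\bar j}$ is scale-invariant (the Ricci form depends on $g$ only through $\log\det g$). Combined with $\partial_\tau=e^{-t}\partial_t$, this yields $\partial_\tau\tilde R_{i\bar j k\bar l}=R_{i\bar j k\bar l}+\partial_t R_{i\bar j k\bar l}$ and $\partial_\tau\tilde R_{i\bar j}=e^{-t}\partial_t R_{i\bar j}$. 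Matching scalings across the Laplacian and the quadratic terms (where each raised index contributes a compensating factor of $e^{-t}$ that cancels against the $e^t$ weight of the $\tilde R$'s), the prefactor derivative $R_{i\bar j k\bar l}$ in the Riemann equation moves to the right-hand side as $-R_{i\bar j k\bar l}$, producing the linear term in the stated formula; for the Ricci equation the prefactor is trivial and no such linear term appears. The calculation is entirely mechanical, the only care needed being in the bookkeeping of index placements and the contraction convention $R^{\bar l k}=g^{k\bar q}g^{p\bar l}R_{p\bar q}$, so no serious obstacle arises.
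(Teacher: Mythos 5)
The paper states this lemma as ``well-known'' and supplies no proof, so there is nothing in the source to compare against line by line. Your rescaling argument is correct and is the standard way to derive the normalized evolution equations from the unnormalized ones. The change of variables $\tilde g(\tau)=e^t g(t)$, $\tau=e^t-1$ does convert the normalized flow into the unnormalized one; $R_{i\bar j}$ is scale-invariant (it is $-\partial_i\partial_{\bar j}\log\det g$) while $R_{i\bar jk\bar l}$ scales linearly with the metric; Christoffel symbols and covariant derivatives are unchanged under a (time-dependent but spatially constant) conformal factor; and $\tilde\Delta$ picks up a factor $e^{-t}$. Tracking these through $\partial_\tau = e^{-t}\partial_t$ produces exactly the extra linear term $-R_{i\bar jk\bar l}$ in the Riemann equation and no such term in the Ricci equation, with all quadratic reaction terms appearing unweighted since each carries the correct compensating powers of $g$ and $g^{-1}$. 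The only part you leave implicit is the precise form of the unnormalized Hamilton--Cao reaction terms, which is acceptable given that you are importing a classical computation; note also that the displayed formula in the paper appears to have a typographical slip ($\bar j$ occurring twice in the term $R_{i\bar j p\bar q}{R^{\bar q}}_{\bar j k}{}^p$, with $\bar l$ missing), but this is an issue with the source, not with your argument.
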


We define following quantities by  
 \begin{equation} \left\{
\begin{array}{l}
P = g^{i\bar j} g_F^{\alpha\bar\beta} R_{i \bar \beta} R_{\alpha \bar j}, \\
\\
Q= g^{k\bar l} g^{i\bar j}g_F^{\alpha \bar \beta} \left( \nabla_{\bar l} R_{i\bar \beta} \nabla_k R_{\alpha \bar j} + \nabla_k R_{i\bar\beta}\nabla_{\bar l}  R_{\alpha \bar j} \right) .
\end{array} 
\right.
\end{equation}

Then we have the following formula for the evolution of $P$.

\begin{lemma} \label{s8evop} 

For any $\cK \subset \subset X^\circ$, there exists $C=C(\cK)>0$ such that on $\cK\times [0, \infty)$, 
$$\tdelta P \leq -Q + C e^t P + C e^t. $$
\end{lemma}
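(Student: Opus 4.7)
The plan is to compute $\tdelta P$ directly via the product rule and then to collect the $|\nabla R|^2$ contributions into $-Q$ while bounding everything else by $C e^t P + C e^t$ on $\cK$. Write $P = \gi{i}{j} g_F^{\alpha\bar\beta} \Ric{i}{\beta} \Ric{\alpha}{j}$ as a scalar on $X^\circ$.

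For $\partial_t P$, I would differentiate using $\partial_t \gi{i}{j} = \gi{i}{p} \gi{q}{j}(\Ric{q}{p} + \g{q}{p})$ together with the induced fibrewise formula $\partial_t g_F^{\alpha\bar\beta} = g_F^{\alpha\bar\eta} g_F^{\gamma\bar\beta}(\Ric{\gamma}{\eta} + \g{\gamma}{\eta})$ (which comes from $g_F = g|_{T_\Phi}$ and the normalized flow) and the displayed evolution $\partial_t \Ric{i}{j} = \Delta \Ric{i}{j} + \gi{k}{q} \gi{p}{l} \Ric{p}{q} \R{i}{j}{k}{l} - \gi{p}{q} \Ric{i}{q} \Ric{p}{j}$. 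For $\Delta P = \gi{k}{l} \nabla_k \nabla_{\bar l} P$, distribute two covariant derivatives across the four factors of $P$; the two cross terms yield precisely
$$
Q = \gi{k}{l}\gi{i}{j}g_F^{\alpha\bar\beta}\left(\nabla_{\bar l}\Ric{i}{\beta}\nabla_k\Ric{\alpha}{j} + \nabla_k\Ric{i}{\beta}\nabla_{\bar l}\Ric{\alpha}{j}\right).
$$
After subtracting $\Delta P$ from $\partial_t P$, the $\Delta \Ric{i}{j}$ pieces cancel the $\nabla\nabla R$ pieces exactly, leaving $-Q$ plus a collection of error terms.

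The errors fall into three types: (a) cubic in Ricci, such as $\gi{p}{q}\gi{i}{j}g_F^{\alpha\bar\beta}\Ric{i}{q}\Ric{p}{\beta}\Ric{\alpha}{j}$; (b) full curvature times two Riccis, of the form $\gi{k}{q}\gi{p}{l}\R{i}{j}{k}{l}\Ric{p}{q}$ contracted into the $P$-structure; and (c) terms involving covariant derivatives of $g_F^{\alpha\bar\beta}$, which arise because $g_F$ is parallel only along vertical directions and its horizontal derivatives carry Weil--Petersson data. On a fixed $\cK \subset\subset X^\circ$, Lemma \ref{c0} gives uniform bounds on $\tr_\omega \chi$ and on the Ricci potential $u$, and Lemma \ref{s2fib} supplies the comparison $g_F^{\alpha\bar\beta} \leq C e^t (g_0|_{X_y})^{\alpha\bar\beta}$. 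Consequently each inverse fibre-metric factor contributes one power of $e^t$; each horizontal derivative of $g_F$ is compensated by the collapse; and $|\ric(g)|_g$ is bounded by $Ce^t$ on $\cK$ via $\Ric{i}{j} = -\ch{i}{j} - \partial_i\partial_{\bar j} u$ and the boundedness of $\Delta \dot\varphi$. Type~(a) is then bounded by $|\ric(g)|_g \cdot P \leq Ce^tP$, type~(b) by the pointwise curvature bound together with Cauchy--Schwarz into $Ce^tP + Ce^t$, and type~(c) by Cauchy--Schwarz against the available $-Q$: the resulting $|\nabla R|^2$ contribution is absorbed into $-\tfrac12 Q$ and dropped, while the complementary factor is reabsorbed into $Ce^tP + Ce^t$.

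The main obstacle is the $e^t$-bookkeeping: every vertical index raised by $g_F^{\alpha\bar\beta}$ carries a factor $e^t$, every horizontal derivative of a vertical tensor carries a compensating $e^{-t}$, and the error expression splits into many contractions which must each be matched against $P$ or $Q$ with the correct weight in order to close up to the stated form. Once this matching is carried out consistently on $\cK$, the bound $\tdelta P \leq -Q + Ce^tP + Ce^t$ follows.
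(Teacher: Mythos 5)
Your overall scheme---differentiate $P$ with the product rule, extract $-Q$ from the crossed-gradient terms of $\Delta P$, cancel the $\Delta R_{i\bar\jmath}$ pieces, and then bound the remaining cubic, curvature\,$\times$\,Ricci$^2$, and $\nabla g_F$ errors---matches the paper's strategy, but two of your three error estimates do not actually close, and the reason is the same missing ingredient.

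First, your justification for $|\ric(g)|_g\leq Ce^t$ on $\cK$ is not valid. Writing $R_{i\bar\jmath}=-\chi_{i\bar\jmath}-u_{i\bar\jmath}$ and invoking the boundedness of $\Delta\dot\varphi$ only gives a bound on the \emph{trace} $\Delta u = -\sR - \tr_\omega\chi$, i.e.\ on the scalar curvature; it says nothing about the full Hessian $u_{i\bar\jmath}$. The bound $|\ric|\leq |\textnormal{Rm}|\leq C_\cK e^t$ used in the paper is the local curvature estimate of Fong--Zhang \cite{FZy}, which is a separate and nontrivial input.

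Second, and more seriously, the curvature\,$\times$\,Ricci$^2$ term does \emph{not} follow from Cauchy--Schwarz together with the curvature bound alone. The bad contraction is
\begin{equation*}
g^{i\bar\jmath}g_F^{\alpha\bar\beta}g^{k\bar q}g^{p\bar l}\,R_{p\bar q}\,R_{i\bar\beta k\bar l}\,R_{\alpha\bar\jmath},
\end{equation*}
and at a point where you take $g$ to be the identity the free Ricci factor $R_{p\bar q}$ runs over \emph{all} index pairs, including $p=q=n$. That purely horizontal component $R_{n\bar n}$ is not controlled by $P^{1/2}$ (since $P$ only involves mixed components with at least one vertical index), and simply bounding it by $|\ric|\leq Ce^t$ and using Cauchy--Schwarz on the remaining two factors yields $Ce^{2t}\sqrt{P}$, which cannot be absorbed into $Ce^tP+Ce^t$. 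The paper's decomposition into cases I (at least one of $k,l\leq n-1$, hence $|R_{k\bar l}|\lesssim\sqrt P$, giving $Ce^tP$) and II ($k=l=n$) is where the real work lies. In case II the key step is the pointwise identity
\begin{equation*}
R_{n\bar n}=\sR-\sum_{\delta=1}^{n-1}R_{\delta\bar\delta},
\end{equation*}
so that $|R_{n\bar n}|\leq C+\sqrt P$ by the uniform scalar curvature bound, and only then does Cauchy--Schwarz close the term to $Ce^tP^{1/2}+Ce^tP\leq Ce^tP+Ce^t$. Your proposal does not produce this identity, and without it the estimate fails. (Your remark about the $\nabla g_F$ contributions being absorbable into $-\tfrac12 Q$ is plausible, though the paper does not spell this out; but that does not repair the two gaps above.)
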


\begin{proof} Direct computation shows that
\begin{eqnarray*}
 \ddt{} P
& = & \ddt{} \left( g^{i\bar j} g_F^{\alpha \bar\beta} R_{i\bar \beta} R_{\alpha \bar j}   \right) \\
&=& g^{i\bar l} g^{k\bar j}( R_{k\bar l} + g_{k\bar l} ) g_F^{\alpha \bar \beta} R_{i\bar \beta} R_{\alpha \bar j} + g^{i\bar j} g_F^{\alpha \bar \eta} g_F^{\delta \bar\beta} ( R_{\delta \bar \eta} + g_{\delta\bar \eta}) R_{i\bar \beta} R_{\alpha \bar j} \\
&& + g^{i\bar j} g_F^{\alpha \bar\beta} \ddt{R_{i \bar \beta}} R_{\alpha \bar j} + g^{i\bar j} g_F^{\alpha \bar \beta} R_{i\bar \beta}\ddt{R_{\alpha\bar j}} \\
&=& g^{i\bar l} g^{k\bar j} g_F^{\alpha\bar\beta} R_{k\bar l} R_{i\bar \beta}R_{\alpha \bar j} + g^{i\bar j} g_F^{\alpha\bar \eta} g_F^{\delta\bar\beta} R_{\delta\bar\eta}R_{i\bar\beta} R_{\alpha \bar j} + 2F + g^{i\bar j}g_F^{\alpha\bar \beta} \ddt{R_{i\bar\beta}} R_{\alpha\bar j}\\
&& + g^{i\bar j} g_F^{\alpha\bar\beta} R_{i\bar \beta} \ddt{R_{\alpha\bar j}}
\end{eqnarray*}
and
\begin{eqnarray*}
\Delta P&=& \Delta \left( g^{i\bar j} g_F^{\alpha\bar\beta} R_{i \bar\beta}R_{\alpha\bar j} \right)\\
&=& G + g^{i\bar j} g_F^{\alpha\bar\beta} R_{\alpha \bar j} \Delta R_{i\bar \beta} + g^{i\bar j} g_F^{\alpha\bar \beta} R_{i\bar \beta} \Delta R_{\alpha\bar j}.
\end{eqnarray*}
Therefore
\begin{eqnarray*}
&&\tdelta P\\
&=& - Q + g^{i\bar l}g^{k\bar j} g_F^{\alpha\bar\beta} R_{k\bar l}R_{i\bar\beta}R_{\alpha\bar j} + g^{i\bar j}g_F^{\alpha\bar \eta} g_F^{\delta   \bar \beta} R_{\delta \bar \eta} R_{i\bar \beta} R_{\alpha \bar j} + 2 P \\
&&+ 2 \re \left( g^{i\bar j} g_F^{\alpha \bar\beta} R_{\alpha \bar j} \tdelta R_{i \bar \beta}\right)\\
&=& - Q + g^{i\bar l}g^{k\bar j} g_F^{\alpha\bar\beta} R_{k\bar l}R_{i\bar\beta}R_{\alpha\bar j} + g^{i\bar j}g_F^{\alpha\bar \eta} g_F^{\delta \bar \beta} R_{\delta\bar \eta} R_{i\bar \beta} R_{\alpha \bar j} + 2F \\
&& + 2\re \left( g^{i\bar j} g_F^{\alpha\bar\beta} g^{k\bar q} g^{p\bar l} R_{p\bar q} R_{i\bar \beta k \bar l} R_{\alpha \bar j}   - g^{i\bar j} g_F^{\alpha\bar \beta} g^{p\bar q} R_{i\bar q}R_{p\bar\beta}R_{\alpha\bar j}   \right)
\end{eqnarray*}
and so on $\cK\times [0, \infty)$, there exists $C=C(\cK)>0$ such that
\begin{eqnarray*}
\tdelta P & \leq&  - Q + C|\ric| P + C P + 2\re \left( g^{i\bar j} g_F^{\alpha \bar \beta} g^{k\bar q} g^{p\bar l} R_{p\bar q} R_{i\bar \beta k \bar l}R_{\alpha \bar j}  \right) \\
&\leq & - Q + eC^t P + 2 \re\left(  g^{i\bar j} g_F^{\alpha \bar\beta} g^{k\bar q} g^{p\bar l} R_{p\bar q} R_{i\bar \beta k\bar l} R_{\alpha\bar j}   \right) 
\end{eqnarray*}
as $|\ric| \leq |\textnormal{Rm}| \leq C_\cK e^t $ on $\cK\times [0, \infty)$ for some $C=C(\cK) >0$ by the result of \cite{FZy}.

For  the last term in the above estimate,  we write 
$$
 g^{i\bar j} g_F^{\alpha \bar\beta} g^{k\bar q} g^{p\bar l} R_{p\bar q} R_{i\bar \beta k\bar l} R_{\alpha\bar j}  =    \textnormal{I} + \textnormal{II} 
$$
with
$$ \textnormal{I} = \sum_{k\leq n-1, ~\textnormal{or}~ l\leq n-1} g^{i\bar j} g_F^{\alpha\bar\beta} g^{k\bar q}g^{p\bar l} R_{p\bar q} R_{i\bar\beta k\bar l} R_{\alpha \bar j}$$
and
$$\textnormal{II}=  g^{i\bar j} g_F^{\alpha\bar\beta} g^{n\bar q}g^{p\bar n} R_{p\bar q} R_{i\bar\beta n\bar n} R_{\alpha \bar j}.$$
By choosing the normal coordinates, we can assume that $g$ is an identity matrix at the given point $x_0 \in \cK$. Then there exists $C>0$ such that on $\cK\times [0, \infty)$, we have 
$$\textnormal{I} \leq C P |\textnormal{Rm}| \leq C e^t P$$
and
\begin{eqnarray*}
\textnormal{II} &=&   R_{n\bar n} R_{\alpha \bar i} R_{i \bar \alpha n \bar n}\\
&=&   \left( \sR - \sum_{\delta=1}^{n-1} R_{\delta \bar \delta} \right) R_{i\bar \alpha} R_{i\bar \alpha n\bar n}\\
&\leq& Ce^t P^{1\over 2}+ C e^t  P \\
&\leq& Ce^t P + C e^t .
\end{eqnarray*}
The lemma is then  proved by combining the above estimates.
\end{proof}

\begin{proposition}\label{s8ric} For any $\cK\subset\subset X^\circ$, there exists $C=C(\cK)>0$ such that on $\cK\times [0, \infty)$, we have 
$$P\leq C. $$

\end{proposition}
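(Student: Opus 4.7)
The plan is a parabolic maximum-principle argument applied to an auxiliary function that couples $P$ with the fibrewise Ricci potential estimates from the previous section. The essential input is the proposition at the end of Section 6,
$$|v-\bar v|\;+\;|\nabla^F v|^2\;\leq\;Ce^{-t}\quad\text{on }X\times[0,\infty),$$
where $v=\rho^2 u$ for a smooth cutoff $\rho$ equal to $1$ on $\cK$ and supported in a slightly larger compact set $\cK'\subset\subset X^\circ$. Combined with the Ricci-potential identity $R_{i\bar j}=-u_{i\bar j}-\chi_{i\bar j}$ and the vanishing of the fibre-direction components of $\chi=\tfrac{1}{m}\Phi^*\omega_{FS}$ (since $\chi$ is pulled back from the one-dimensional base), this identifies $P$ up to a bounded error as a squared fibrewise Hessian of $u$, so the target estimate $P\leq C$ is structurally equivalent to an $L^\infty$ bound for $|u_{\alpha\bar\beta}|^2$ in fibre directions, suitably weighted by $g_F^{-1}$.

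Mimicking the denominator technique used in Section 6 to obtain the fibrewise gradient estimate, I would study an auxiliary quantity of the form
$$H\;=\;\frac{\rho^{2N}\,P}{A\,e^{-t}+(v-\bar v)}\;+\;B\,\tr_\omega(\chi)\;+\;D\,e^{t}\,|\nabla^F v|^2,$$
with large constants $A,B,D,N$ to be chosen. For $A$ larger than the Section 6 constant, the denominator is positive and comparable to $e^{-t}$ on $\cK'\times[0,\infty)$, and all three pieces of $H$ are individually well-behaved. I would then compute $\tdelta H$ using Lemma \ref{s8evop} for $\tdelta P$, the standard evolution of $\tr_\omega(\chi)$ (which contributes the favourable $-|\nabla\tr_\omega(\chi)|^2$), Claim 2 of Section 6 for $\tdelta(v-\bar v)$, and Claim 1 of Section 6 for $\tdelta|\nabla^F v|^2$ (which provides a favourable $-\tfrac{1}{2}\rho^4(|\nabla\nabla^F u|^2+|\nabla\overline\nabla^F u|^2)$). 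The strategy is to absorb, via Cauchy--Schwarz, all cross terms (from the cutoff $\rho^{2N}$, from differentiating the denominator, and from the mixed Ricci--Ricci terms in the last line of the proof of Lemma \ref{s8evop}) into the good terms $-Q/D$, $-|\nabla\tr_\omega(\chi)|^2$, and $-e^{t}\rho^4(|\nabla\nabla^F u|^2+|\nabla\overline\nabla^F u|^2)$, and to use the time derivative $-Ae^{-t}$ of the denominator to produce a negative $P$-contribution that counterbalances the dangerous $+Ce^{t}P$ term in Lemma \ref{s8evop}.

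At a large interior maximum of $H$ on $\cK'\times[0,T]$, the resulting algebraic inequality should force a uniform bound $H\leq C'$, from which $P\leq C$ on $\cK\times[0,\infty)$ follows at once using $\rho|_\cK\equiv 1$ and the comparability $Ae^{-t}+(v-\bar v)\sim Ae^{-t}$; on the parabolic boundary the quantity $H$ reduces to $B\,\tr_\omega(\chi)+D\,e^t|\nabla^F v|^2$, which is controlled by Lemma \ref{c0} and Section 6. The main obstacle is that the $e^{-t}$-decay of the denominator is exactly borderline for absorbing the $e^{t}$-growth of the bad term in Lemma \ref{s8evop}: a successful argument must exploit the full sharpness of Section 6 --- namely that both $|v-\bar v|$ and $|\nabla^F v|^2$ decay at rate exactly $e^{-t}$ rather than any slower rate --- together with a careful choice of the relative sizes of the constants $A,B,D,N$ so that the cancellation between the positive and negative $e^{t}$-scale contributions really goes through.
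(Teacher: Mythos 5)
Your proposal captures the right high-level shape — a coupled maximum-principle quantity mixing a $P$-to-denominator ratio with the fibrewise Ricci-potential pieces from Section 6 — and you correctly note the structural identity $P\lesssim|\nabla\overline\nabla^F u|^2$ coming from the Ricci-potential identity and the vanishing of the fibre components of $\chi$. However, two points in your auxiliary function and in your description of the cancellation mechanism are off in a way that would derail the argument.

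First, the scaling of the $P$-piece is wrong. You use $\dfrac{\rho^{2N}P}{Ae^{-t}+(v-\bar v)}$ with no extra $e^{-t}$. Since the denominator is comparable to $Ae^{-t}$, this piece is comparable to $\rho^{2N}Pe^{t}/A$, which is bounded only if $\rho^{2N}P\lesssim e^{-t}$ — a stronger (and almost certainly false) statement than $P\leq C$, since $P$ is a component of $|\ric|^2$ which is expected to be bounded but not decaying. A maximum principle cannot establish a bound for a quantity that is in fact unbounded. The paper instead uses $H_2=\dfrac{e^{-t}(1+P)^{1-\delta}}{Be^{-t}+(u-\bar u)}$; the extra $e^{-t}$ makes the ratio comparable to $(1+P)^{1-\delta}$, so boundedness of $H_2$ is genuinely equivalent to boundedness of $P$. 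Moreover, the bad term $\tdelta P\leq -Q+Ce^{t}(1+P)$, after dividing by the denominator, becomes $\sim e^{2t}P$ in your normalization — one order of $e^{t}$ worse than the favorable $-De^{t}\rho^4|\nabla\overline\nabla^F u|^2 \lesssim - De^t\rho^4 P$ you are invoking to cancel it. With the $e^{-t}$ factor the bad piece is $\sim Ce^{t}(1+P)/B$, which the $-B_1P/D\sim -B_1e^{t}P/B$ term from $H_1$ kills once $B_1>C$.

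Second, you attribute the crucial negative $P$-contribution to "the time derivative $-Ae^{-t}$ of the denominator". The sign is backwards: $\partial_t(f/D) = (\partial_t f)/D - f(\partial_t D)/D^2$, so $\partial_t(Ae^{-t})=-Ae^{-t}$ contributes $+\,fAe^{-t}/D^2$, a \emph{positive} term (it appears as such in the paper's Step 2). The real source of the favorable $-P$-term is the $-\tfrac12|\nabla\overline\nabla^F u|^2/D$ contribution from $\tdelta H_1$ in the paper's Step 3, which you do partly anticipate with the $De^{t}|\nabla^F v|^2$ piece, but you assign it a subsidiary absorption role rather than the central one.

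Finally, a more technical issue: the paper introduces the power $(1+P)^{1-\delta}$ precisely so that the cutoff interaction terms $-H\Delta\rho^{\,l}$ and $\nabla H\cdot\overline\nabla\rho^{\,l}$ can be reabsorbed via the algebraic identity $\rho^{\,l-2}(1+P)^{1-\delta}=(\rho^{\,l}(1+P))^{1-\delta}$ with $l=2/\delta$. Your placement of $\rho^{2N}$ inside the numerator is a different arrangement, and while not obviously hopeless, it introduces additional cross-terms ($\nabla P\cdot\overline\nabla\rho^{2N}$) that you have not shown how to absorb; one should not expect to evade the power reduction without a replacement mechanism. So the overall strategy is in the right neighborhood, but the specific auxiliary function has the wrong $e^{t}$-scaling and the key cancellation is misidentified.
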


\begin{proof} It suffices the prove the proposition in a neighborhood of any $x_0\in X^\circ$. We will keep the same notations as before and break the proof into the following steps.

\noindent{\it Step 1.}   We fix $B_{2r_0}$ and $U_{2r_0}$ as in (\ref{s7bcho}) and (\ref{s7bcho2}). By Lemma \ref{s8evop} there exists $C=C(B_{2r_0})>0$ such that  on $U_{2r_0} \times [0, \infty)$, we have 
$$\tdelta P \leq - Q + Ce^t(1+P). $$
Also by definition and Cauchy-Schwaz inequality, we have 
$$|\nabla P|^2 \leq C P Q.$$

We now fix some $0< \delta << (100)^{-1}$ and compute the evolution of $(1+P)^{1-\delta}$ on $U_{2r_0} \times [0, \infty)$. 

\begin{eqnarray*}
&&\tdelta(1+P)^{1- \delta}\\
&=& (1-\delta) \frac{ \tdelta P}{(1+P)^\delta} + \delta(1-\delta) \frac{|\nabla P|^2}{(1+P)^{1+\delta}}\\
&\leq& -(1-\delta)\frac{Q}{(1+P)^\delta} + (1-\delta) \frac{Ce^t(1+P)}{(1+P)^\delta} + (1-\delta)\frac{\delta PQ}{(1+P)^{1+\delta}}\\
&\leq& - \frac{Q}{2(1+P)^\delta} + Ce^t(1+P).
\end{eqnarray*}

\noindent{\it Step 2. } We define on $U_{2r_0}\times [0, \infty)$ 
$$H_2 = \frac{ e^{-t} (1+P)^{1-\delta}}{B e^{-t} + (u-\bar u)}, $$
where $B>0$ is chosen such that on $U_{2r_0} \times[0, \infty)$, 
$$2 |u - \bar u| \leq B e^{-t}. $$
For some fixed $0 <\varepsilon <<1$ to be determined later, we compute the evolution of $H_2$.
\begin{eqnarray*}
&& \tdelta H_2\\
&=& \frac{e^{-t} \tdelta (1+P)^{1-\delta}}{Be^{-t} + (u-\bar u)} - \frac{ e^{-t} (1+P)^{1-\delta} \tdelta(u- \bar u)}{\left( Be^{-t} + (u-\bar u)\right)^2}   + (2-2\varepsilon) \frac{ \re\left( \nabla H_2 \cdot \overline\nabla(u-\bar u)\right)}{Be^{-t} + (u-\bar u)} \\
&& - 2\varepsilon \frac{ e^{-t} (1+P)^{1-\delta}|\nabla(u-\bar u)|^2}{\left(Be^{-t} + (u-\bar u)\right)^3} + 2\varepsilon  \frac{e^{-t}\re\left(  \nabla(1+P)^{1-\delta} \cdot \overline\nabla(u-\bar u)  \right)}{\left( Be^{-t} + (u-\bar u)\right)^2} \\
&& + \frac{B e^{-2t}(1+P)^{1-\delta}}{\left(Be^{-t} + (u-\bar u)\right)^2} - \frac{e^{-t} (1+P)^{1-\delta}}{Be^{-t} + (u-\bar u)}.
\end{eqnarray*}

We will estimate terms in the above formula. There exists $C>0$ such that on $U_{2r_0}\times [0, \infty)$, we have 
\begin{itemize}

\item $$\frac{e^{-t} \tdelta (1+P)^{1-\delta}}{Be^{-t} + (u-\bar u)}  \leq - \frac{e^{-t} Q}{2(1+P)^\delta \left(Be^{-t} + (u-\bar u)\right)} + \frac{C(1+P)}{Be^{-t} + (u - \bar u)} ,$$

\item $$- \frac{ e^{-t} (1+P)^{1-\delta} \tdelta(u- \bar u)}{\left( Be^{-t} + (u-\bar u)\right)^2} \leq \frac{Ce^{-t}(1+P)^{1-\delta}}{\left( Be^{-t} + (u-\bar u)\right)^2} \leq \frac{C (1+P) }{  Be^{-t} + (u-\bar u) }, $$

\item 
\begin{eqnarray*}
&&2\varepsilon  \frac{e^{-t}\re\left(  \nabla(1+P)^{1-\delta} \cdot \overline\nabla(u-\bar u)  \right)}{\left( B e^{-t} + (u-\bar u)\right)^2} \\
&\leq& C\varepsilon \frac{|\nabla (1+P)^{1-\delta}| |\nabla(u-\bar u)|}{ Be^{-t} + (u-\bar u)} \\
&\leq& C\varepsilon\frac{ Q^{1/2} P^{1/2} |\nabla(u-\bar u)|}{(1+P)^\delta \left(Be^{-t} + (u-\bar u)\right)}\\
&\leq&\frac{C\varepsilon^{1/2} e^{-t} Q + C \varepsilon^{3/2} e^t P |\nabla(u-\bar u)|^2}{(1+P)^\delta \left(Be^{-t} + (u-\bar u)\right)}\\
&\leq&\frac{  e^{-t} Q}{100(1+P)^\delta \left(Be^{-t} + (u-\bar u)\right)} + \frac{  \varepsilon  e^{-t} (1+P)^{1-\delta} |\nabla(u-\bar u)|^2}{ 100\left(Be^{-t} + (u-\bar u)\right)^3}
\end{eqnarray*} 
by letting $C \varepsilon^{1/2} \leq 100^{-1}$. 
\end{itemize}
We then complete this step with the following estimate on $U_{2r_0}\times [0, \infty)$
$$ \tdelta H_2 \leq - \varepsilon \frac{H_2 |\nabla(u-\bar u)|^2}{\left( Be^{-t} + (u-\bar u) \right)^2} + (2-2\varepsilon) \frac{ \re\left( \nabla H_2 \cdot \overline\nabla (u-\bar u) \right)}{Be^{-t} + (u-\bar u)} + \frac{C(1+P)}{Be^{-t} + (u-\bar u)}$$
for some fixed constant $C>0$ and $0<\varepsilon <<1$.

\bigskip

\noindent{\it Step 3. }  We now choose 
$$H_1 = \frac{|\nabla^F u|^2}{Be^{-t} + (u-\bar u)} + \tr_\omega(\chi)$$
as in the gradient estimate in the previous section on $U_{2r_0} \times [0, \infty)$. 
Then from the calculations in the previous section, we have
\begin{eqnarray*}
&&\tdelta H_1\\
&=&\frac{|\nabla^F u|^2 - |\nabla\nabla^F u|^2 - |\nabla\overline\nabla^F u |^2 + 2\re\left( \nabla^F \tr_\omega(\chi)\cdot \overline\nabla^F  u \right)}{Be^{-t} + (u-\bar u)} - \frac{|\nabla^F u|^2 \tdelta(u-\bar u)}{\left(Be^{-t} + (u-\bar u)\right)^2} \\
&& + Be^{-t} \frac{|\nabla^F u|^2}{\left(Be^{-t} + (u-\bar u)\right)^2} - 2\varepsilon \frac{|\nabla^F u|^2 |\nabla(u-\bar u)|^2}{\left(Be^{-t} + (u-\bar u)\right)^3} + (2-2\varepsilon) \frac{\re\left( \nabla H_1 \cdot \overline\nabla(u-\bar u) \right)}{Be^{-t} + (u-\bar u)}
\\
&& - (2-2\varepsilon) \frac{\re\left( \nabla \tr_\omega(\chi)\cdot \overline\nabla (u-\bar u) \right)}{ Be^{-t} + (u-\bar u)} + 2\varepsilon \frac{\re\left( \nabla|\nabla^F u|^2 \cdot \overline\nabla(u-\bar u)  \right) }{\left( B _0e^{-t} + (u-\bar u)\right)^2} + \tdelta \tr_\omega(\chi).
\end{eqnarray*}
Since
$$\tdelta \tr_\omega(\chi) \leq - |\nabla \tr_\omega(\chi)|^2 + C$$
and
\begin{eqnarray*}
2\varepsilon \frac{\re\left( \nabla|\nabla^F u|^2 \cdot \overline\nabla(u-\bar u) \right)}{\left( Be^{-t} + (u-\bar u) \right)^2} &\leq& \varepsilon \frac{ |\nabla^F u|^2 |\nabla(u-\bar u)|^2 }{\left( Be^{-t} + (u-\bar u)\right)^3} + 4\epsilon \frac{|\nabla\nabla^F u|^2 + |\nabla\overline\nabla^F u|^2}{Be^{-t} + (u-\bar u)} \\
&\leq& \varepsilon \frac{ |\nabla^F u|^2 |\nabla(u-\bar u)|^2 }{\left( Be^{-t} + (u-\bar u)\right)^3} + \frac{1}{2} \frac{|\nabla\nabla^F u|^2 + |\nabla\overline\nabla^F u|^2}{Be^{-t} + (u-\bar u)} \\
\end{eqnarray*}
by choosing sufficiently small $\varepsilon>0$. We then complete this step with the following estimate on $U_{2r_0} \times [0, \infty)$
\begin{eqnarray*}
&&\tdelta H_1 \\
&\leq& - \frac{|\nabla\overline\nabla^F u|^2}{2\left( Be^{-t} + (u-\bar u)\right)} + 
(2-2\varepsilon) \frac{\re\left( \nabla H_1 \cdot \overline\nabla(u-\bar u) \right)}{Be^{-t} + (u-\bar u)} + C\frac{|\nabla(u-\bar u)|^2}{\left( Be^{-t} + (u-\bar u)\right)^2} + C\\
&\leq& - \frac{P}{Be^{-t} + (u-\bar u)} + 
(2-2\varepsilon) \frac{\re\left( \nabla H_1 \cdot \overline\nabla(u-\bar u) \right)}{Be^{-t} + (u-\bar u)} + C\frac{|\nabla(u-\bar u)|^2}{\left( Be^{-t} + (u-\bar u)\right)^2} + Ce^t.
\end{eqnarray*}

\bigskip

\noindent{\it Step 4. } Now for some sufficiently large $B_1>>1$ to be determined later, we define
$$H=H_2 + B_1 H_1$$
on $U_{2r_0}\times [0, \infty)$. By combining the estimates from previous steps, we have
\begin{eqnarray*}
 \tdelta H &\leq& \frac{ (C -   B_1) P}{ Be^{-t} + (u-\bar u)} + (2-2\varepsilon) \frac{\re\left( \nabla H\cdot \overline\nabla(u-\bar u) \right)}{Be^{-t} + (u-\bar u)} \\
 &&+ \varepsilon \frac{\varepsilon |\nabla(u-\bar u)|^2}{\left( Be^{-t} + (u-\bar u)\right)^2} (C\varepsilon^{-1} B_1 - H_2)  + CB_1 e^t.
 \end{eqnarray*}

We will make use of the cut-off function $\rho$ defined in (\ref{s7cutoff}) and choose
$$l = \frac{2}{\delta} >>1. $$
Then on $U_{2r_0}\times [0, \infty)$, we let 
$$K  = \rho^l H$$ and
\begin{eqnarray*}
&& \tdelta K\\
&=& \rho^l \tdelta H - H \Delta \rho^l - 2\re\left( \nabla H \cdot \overline\nabla \rho^l\right)\\
&\leq& \rho^l \frac{ (C - B_1) P}{Be^{-t} + (u-\bar u)} + (2-2\varepsilon) \rho^l \frac{\re\left( \nabla H \cdot \overline\nabla(u-\bar u) \right)}{Be^{-t} + (u-\bar u)} \\
&&+ \varepsilon \rho^l (C\varepsilon^{-1} B_1 - H_2) \frac{|\nabla(u-\bar u)|^2}{\left( Be^{-t} + (u-\bar u)\right)^2} + CB_1 e^t - H \Delta \rho^l - 2\re\left( \nabla H\cdot \overline\nabla \rho^l \right).
\end{eqnarray*}
Since $1-\delta = 1- 2l^{-1} = \frac{l-2}{l}$, we have
\begin{eqnarray*}
- H\Delta \rho^l &\leq& C \rho^{l-2} H\\
&\leq& C \rho^{l-2} H_2+ C\\
&\leq& C\rho^{l-2} (1+P)^{1-\delta} + C\\
&=& C(\rho^l(1+P))^{\frac{l-2}{l}} + C\\
&\leq& C \rho^l (1+P) + C.
\end{eqnarray*}
Similarly, we have
\begin{eqnarray*}
- 2\re\left( \nabla H \cdot \overline\nabla \rho^l \right) &=& - 2l \rho^{-1} \re\left( \nabla(\rho^l H) \cdot \overline\nabla \rho \right) + 2 l^2 \rho^{l-2} H |\nabla \rho|^2\\
&\leq& - 2l \rho^{-1} \re\left( \nabla(\rho^l H) \cdot \overline\nabla \rho \right) +C \rho^{l-2} (1+P)^{1-\delta} + C\\
&\leq& - 2l \rho^{-1} \re\left( \nabla(\rho^l H) \cdot \overline\nabla \rho \right) +C \rho^l (1+P)  + C.
\end{eqnarray*}
Therefore
\begin{eqnarray*}
&&(2-2\varepsilon) \rho^l \frac{\re\left( \nabla H \cdot \overline\nabla(u-\bar u) \right)}{Be^{-t} + (u-\bar u)}\\
&=& (2-2\varepsilon) \frac{\re\left( \nabla K \cdot \overline\nabla(u-\bar u)\right) - H \re\left( \nabla \rho^l \cdot \overline\nabla(u-\bar u) \right)}{Be^{-t} + (u-\bar u)} \\
&\leq& (2-2\varepsilon) \frac{\re\left( \nabla K \cdot \overline\nabla(u-\bar u)\right)}{Be^{-t} + (u-\bar u)} + \frac{ C\rho^{l-2} (1+P)^{1-\delta} + C}{ Be^{-t} + (u-\bar u)}\\
&\leq&(2-2\varepsilon) \frac{\re\left( \nabla K \cdot \overline\nabla(u-\bar u)\right)}{Be^{-t} + (u-\bar u)} + \frac{C \rho^l P}{ Be^{-t} + (u-\bar u)} + C e^t 
\end{eqnarray*}
and so
\begin{eqnarray*}
&& \tdelta K\\
&\leq& \rho^l \frac{(C - B_1)P}{Be^{-t} + (u-\bar u)} + (2-2\varepsilon) \frac{\re\left( \nabla K \cdot\overline\nabla(u-\bar u) \right)}{Be^{-t} + (u-\bar u)}  - 2l \rho^{-1} \re\left( \nabla K \cdot \overline\nabla \rho\right)\\
&& +\varepsilon \rho^l( C\varepsilon^{-1} B_1 - H_2) \frac{ |\nabla(u-\bar u)|^2}{ \left(Be^{-t} + (u-\bar u)\right)^2} + CB_1 e^t.
\end{eqnarray*}

We choose $B_1> C+1$ and let $(x_0, t_0)$.  Suppose the maximal point of $K$ with $t_0>0$. We can assume that $\rho(x_0)>0$ and $\nabla K(x_0, t_0) =0.$ 

If $H_2(x_0, t_0) \leq C\varepsilon^{-1} B_1$, then we are done. Otherwise, $H_2(x_0, t_0) > C\varepsilon^{-1} B$ and by the maximal principle, we have at $(x_0, t_0)$, 
$$ 0 \leq - \rho^l \frac{P}{Be^{-t} + (u-\bar u)} + C e^t$$
or equivalently
$$\rho^l(x_0) P(x_0, t_0) \leq C$$
for some uniform constant $C>0$.
This implies that
$$\rho^l(x_0) H_2(x_0, t_0) \leq C$$
and so 
$$\sup_{X\times [0, \infty)} K \leq C. $$
Finally, we can conclude that on $U_{r_0}\times [0, \infty)$, 
$$P\leq C$$
and we have completed the proof of the proposition.
\end{proof}

We finish this section by proving Theorem \ref{main2}. 

\begin{theorem} For any $\cK \subset\subset X^\circ$, there exists $C=C(\cK)>0$ such that
$$\sup_{\cK\times [0, \infty)} |\ric| \leq C. $$

\end{theorem}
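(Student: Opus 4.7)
The plan is to extract the full Ricci bound from the partial bound $P\leq C$ of Proposition~\ref{s8ric} by combining it with the fibrewise collapsing rate of Lemma~\ref{s2fib} and the scalar curvature bound of Lemma~\ref{c0}. The mechanism is that $P$ carries exactly the right metric weights to control the Ricci components with at least one fiber index, after which the only missing piece (the base-base component) is forced by the scalar trace.

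Fix $x_0 \in \cK$ with $y_0 = \Phi(x_0) \in X_{\textnormal{can}}^\circ$, and work in adapted coordinates $(z_1,\dots,z_n)$ as in (\ref{cord}), with $\{z_n = \textnormal{const}\}$ the fibers, and with $g(t)$ diagonalized at $x_0$. By Lemma~\ref{s2fib},
\[
g_{\alpha\bar\alpha}(t) \sim e^{-t} \quad (\alpha \leq n-1), \qquad g_{n\bar n}(t) \sim 1,
\]
so $g^{\alpha\bar\alpha}(t) \sim g_F^{\alpha\bar\alpha}(t) \sim e^{t}$ and $g^{n\bar n}(t) \sim 1$ at $x_0$. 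In these coordinates
\[
P(x_0,t) \;=\; \sum_{i=1}^{n}\sum_{\alpha=1}^{n-1} g^{i\bar i}(t)\, g_F^{\alpha\bar\alpha}(t)\, |R_{i\bar\alpha}(t)|^2,
\]
so the bound $P \leq C$ of Proposition~\ref{s8ric} immediately yields $|R_{i\bar j}(x_0,t)| \leq C\,e^{-t}$ whenever $i,j \leq n-1$ (fiber-fiber block) and $|R_{n\bar\alpha}(x_0,t)| \leq C\,e^{-t/2}$ for $\alpha \leq n-1$ (mixed block).

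The component not directly controlled by $P$ is the base-base Ricci $R_{n\bar n}$. For this I would use
\[
\sR \;=\; g^{n\bar n} R_{n\bar n} + \sum_{\alpha \leq n-1} g^{\alpha\bar\alpha} R_{\alpha\bar\alpha};
\]
the fiber sum is $e^{t}\cdot O(e^{-t}) = O(1)$ by the previous step, and $|\sR| \leq C$ by Lemma~\ref{c0}, hence $|R_{n\bar n}(x_0,t)| \leq C$. Plugging these component estimates into $|\ric|^2 = \sum_{i,k} g^{i\bar i}g^{k\bar k}|R_{i\bar k}|^2$, every block contributes $O(1)$: the base-base block is $1\cdot 1\cdot O(1)$, each mixed block is $1\cdot e^{t}\cdot O(e^{-t})$, and the fiber-fiber block is $e^{2t}\cdot O(e^{-2t})$. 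Since $x_0 \in \cK$ and $t \geq 0$ were arbitrary, this gives the uniform bound $|\ric| \leq C$ on $\cK \times [0,\infty)$.

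The substantive analytic obstacle has already been resolved in Proposition~\ref{s8ric}: the maximum principle argument that keeps $P$ bounded in spite of the fact that the full curvature tensor generally grows like $e^{t}$ along a collapsing Calabi-Yau fibration (see \cite{FZy}) is the real work of the section. Once $P$ is tamed, the conversion to an honest bound on $|\ric|$ is essentially bookkeeping, precisely because the $g_F$-weighting built into the definition of $P$ is calibrated to compensate for the collapsing fiber metric in each term in which $|\ric|$ could a priori blow up; the scalar curvature bound then pins down the remaining base component for free.
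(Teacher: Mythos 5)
Your proposal is correct and follows essentially the same route as the paper: exploit $P \leq C$ from Proposition~\ref{s8ric} to control every Ricci component with at least one fiber index, then recover $R_{n\bar n}$ from the scalar curvature trace. The only cosmetic difference is the coordinate normalization --- the paper works at a point where $g(t)$ is the identity (so the $e^{\pm t}$ weights you track explicitly all become $1$ and the cancellations are invisible), while you diagonalize $g(t)$ and track the collapsing rates from Lemma~\ref{s2fib}, which makes the ``$g_F$-weighting is calibrated to the collapse'' heuristic visible but is otherwise the same bookkeeping.
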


\begin{proof} Under the previously chosen holomorphic coordinates at any point $x_0 \in \cK$ and $t_0 \geq 0$, we have 
$$R_{n \bar n} = \sR - \left( R_{1\bar 1} + R_{2\bar 2} +... + R_{(n-1)\overline{(n-1)}}  \right). $$
Since $\sR$ is uniformly bounded on $X \times [0, \infty)$ and at $(x_0, t_0)$,
$$ \sum_{i=1}^n \sum_{\alpha=1}^{n-1} \left| R_{i \bar \alpha}\right|^2 \leq C $$
 for some $C=C(\cK)>0$ by Proposition \ref{s8ric}, there exists $C=C(\cK)>0$ such that
$$ |R_{n\bar n}| \leq C. $$
Therefore at $(x_0, t_0)$, we have 
$$|\ric|^2 = \sum_{i, j=1}^n |R_{i\bar j}|^2 \leq C$$
for some uniform constant $C>0$. The theorem is proved.
\end{proof}


\bigskip
\bigskip

\noindent{\bf Acknowledgements.} The authors would like to thank Gang Tian, Zhenlei Zhang, Yalong Shi for inspiring discussions. Part of this work was carried out during the first named author's visit  at Beijing International Center for Mathematical Research (BICMR). He would like to thank BICMR and Kewei Zhang for the hospitality and support. 

\bigskip
\bigskip

\end{document}